\newtheorem{theorem}{Theorem}[section]
\newtheorem{lemma}[theorem]{Lemma}
\newtheorem{prop}[theorem]{Proposition}
\newtheorem{corollary}[theorem]{Corollary}
\theoremstyle{definition}
\newtheorem{definition}[theorem]{Definition}
\theoremstyle{remark}
\newtheorem{remark}[theorem]{Remark}
\newcommand{\mres}{\mathbin{\vrule height 1.6ex depth 0pt width
0.13ex\vrule height 0.13ex depth 0pt width 1.3ex}}
\newcommand{\argmin}{\mathop{\mathrm{argmin}}\limits}   
\newcommand{\parallelsum}{\mathbin{\!/\mkern-5mu/\!}}
\def\N{\mathbb{N}}
\def\R{\mathbb{R}}
\def\subclassname{{\bfseries Mathematics Subject Classification
(2010)}\enspace}
\def\subclass#1{\par\addvspace\medskipamount{\rightskip=0pt plus1cm
\def\and{\ifhmode\unskip\nobreak\fi\ $\cdot$
}\noindent\subclassname\ignorespaces#1\par}}
\providecommand{\keywords}[1]
{
  \small	
  \textbf{\textit{Keywords---}} #1
}
\begin{document}

\title{Metrics, quantization and registration in varifold spaces}

\author{Hsi-Wei Hsieh}
\address{Department of Applied Mathematics and Statistics, Johns Hopkins University}
\email{hhsieh9@jhu.edu}

\author{Nicolas Charon}
\address{Department of Applied Mathematics and Statistics, Johns Hopkins University}
\email{charon@cis.jhu.edu}


\date{}

\dedicatory{}

\begin{abstract}
This paper is concerned with the theory and applications of varifolds to the representation, approximation and diffeomorphic registration of shapes. One of its purpose is to synthesize and extend several prior works which, so far, have made use of this framework mainly in the context of submanifold comparison and matching. In this work, we instead consider deformation models acting on general varifold spaces, which allows to formulate and tackle diffeomorphic registration problems for a much wider class of geometric objects and lead to a more versatile algorithmic pipeline. We study in detail the construction of kernel metrics on varifold spaces and the resulting topological properties of those metrics, then propose a mathematical model for diffeomorphic registration of varifolds under a specific group action which we formulate in the framework of optimal control theory. A second important part of the paper focuses on the discrete aspects. Specifically, we address the problem of optimal finite approximations (quantization) for those metrics and show a $\Gamma$-convergence property for the corresponding registration functionals. Finally, we develop numerical pipelines for quantization and registration before showing a few preliminary results for one and two-dimensional varifolds.    

\subclass{49Q20 \and 49M25 \and 58E50 \and 68T10}
\end{abstract}
\keywords{varifolds, diffeomorphic registration, reproducing kernels, quantization, optimal control, $\Gamma$-convergence}

\maketitle


.

\section{Introduction}
Shape is a bewildering notion: while simultaneously intuitive and ubiquitous to many scientific areas from pure mathematics to biomedicine, it remains very challenging to pin down and analyze in a systematic way. The goal of the research field known as shape/pattern analysis is precisely to provide solid mathematical and algorithmic frameworks for tasks such as automatic comparison or statistical analysis in ensembles of shapes, which is key to many applications in computer vision, speech and motion recognition or computational anatomy, among many others.

What makes shape analysis such a difficult and still largely open problem is, on the one hand, the numerous modalities and types of objects that can fall under this generic notion of shape but also the fundamental nonlinearity that is an almost invariable trait to most of the shape spaces encountered in applications. As a result, the seemingly simple issue of defining and computing distances or means on shapes is arguably a research topic of its own, which has generated countless works spanning several decades and involving concepts from various subdisciplines of mathematics. Among many important works, the model of shape space laid out by Grenander in \cite{Grenander1993} is especially relevant to the present paper. The underlying principle is to build distances between shapes which are induced by metrics on some deformation groups acting on those shapes. This approach has the advantage (at a theoretical level at least) of shifting the problem of metric construction from the many different cases of shape spaces to the single setting of deformation groups. One of the fundamental requirement is the right-invariance of the metrics on those groups; finding the induced distance between two given shapes then reduces to determining a deformation of minimal cost in the group, in other words to solving a \textit{registration} problem.

Besides usual finite-dimensional groups like rigid of affine transformations, there is in fact a lot of practical interest in applying such an approach with groups of "large deformations", specifically groups of \textit{diffeomorphisms}. This has triggered the exploration of right-invariant metrics over diffeomorphism groups. The Large Deformation Diffeomorphic Metric Mapping (LDDMM) model pioneered in \cite{Beg2005,younes2019shapes} is one of such framework that defines Riemannian metrics for diffeomorphic mappings obtained as flows of time-dependent vector fields (c.f. the brief presentation of Section \ref{ssec:diffeom_reg}). In this setting, registering two shapes can be generically formulated as an \textit{optimal control} problem, the functionals to optimize being typically a combination of a deformation regularization term given by the LDDMM metric on the group and a fidelity term that enforces (approximate) matching between the two shape objects. Applications of this model have been widespread in particular within the field of computational anatomy, due to the ability to adapt it to various data structures including landmarks, 2D and 3D images, tensor fields... see e.g. \cite{Miller2009,Miller2015} for recent reviews.

Interestingly, this line of work has also been drawing many useful concepts from the seemingly distant area of mathematics known as \textit{geometric measure theory} \cite{Federer}. The key idea of representing shapes (submanifolds) as measures or distributions has been instrumental in the theoretical study of Plateau's problem on minimal surfaces and more generally in calculus of variations. It can also prove effective for computational purposes, in problems such as discrete curvature approximations \cite{Cohen-Steiner2003,buet2018discretization} or estimation of shape medians \cite{Hu2018}. With regard to the aforementioned deformation analysis problems, the potential interest of geometric measure theory has been identified early on in the works of \cite{Glaunes2004,Glaunes2006}. Indeed, LDDMM registration of objects like geometric curves or surfaces requires fidelity terms independent of the parametrization of either of the two shapes. On the practical side, this means that one cannot usually rely on predefined pointwise correspondences between the vertices of two triangulated surfaces for instance, which makes the registration problem significantly harder than in the case of labelled objects such as landmarks or images. 

The embedding of unparametrized shapes into measure spaces provides one possible way to address the issue, by constructing parametrization-invariant fidelity metrics as restrictions of metrics on those measure spaces themselves. Several competing approaches have been introduced, each relying on embeddings into different spaces of generalized measures: \cite{Glaunes2006,Glaunes2008,Durrleman4} are based on the representation of oriented curves and surfaces as \textit{currents}, \cite{Charon2} and \cite{Charon2017,bauer2019relaxed} extended this model to the setting of unoriented and oriented \textit{varifolds}, while \cite{Roussillon2016,Roussillon2017} considers the higher-order representation of \textit{normal cycles}, see also the recent survey \cite{charon2020fidelity}. One common feature to all those works, however, is that they are focused primarily on registration of curves or surfaces. In other words, the use of current, varifolds or normal cycles confines to the computation of a fidelity metric to guide registration algorithms but the deformation model itself remains tied to the curve/surface setting or equivalently, in the discrete situation, to objects described by point set meshes. 

The guiding theme and main objective of this paper is to investigate an alternative framework that, in contrast with those prior works, would formulate the deformation model as well as tackle the registration problem directly in these generalized measure spaces: we focus specifically on the (oriented) varifold setting of \cite{Charon2017}. There are several arguments for the interest of such an approach 
but in our point of view, the primary motivation lies in the fact that, varifolds being more general than submanifolds, the proposed framework allows to extend large deformation analysis methods to a range of new geometric objects while giving more flexibility to deal with some of the flaws which are commonplace in shapes segmented from raw data. As a proof of concept, our recent work \cite{hsieh2019diffeomorphic} considered the simple case of registration of discrete one-dimensional varifolds. Building on these preliminary results, the present paper intends to provide a thorough and general study of the framework.

The specific contributions and organization of this paper are the following. First, we propose a comprehensive study of the class of kernel metrics on varifold spaces initiated in \cite{Charon2,Charon2017}, in particular by examining the required conditions to recover true distances between all varifolds (as opposed to the subset of rectifiable varifolds) and comparing the resulting topologies with some standard metrics on measures. This is presented in Section \ref{sec:metrics_varifold} after the brief introduction to the notion of oriented varifold of Section \ref{sec:varifold_space}. In Section \ref{diff_matching}, we discuss the action of diffeomorphisms and from there derive a formulation of LDDMM registration of general varifolds, for which we show the existence of solutions and derive the Hamiltonian equations associated to the corresponding optimal control problem. Section \ref{sec:approximation_discrete_var} addresses the issue of quantization in varifold space, namely of approximating any varifold as a finite sum of Dirac masses. We consider a novel approach in this context, that consists in computing projections onto particular cones of discrete varifolds. We then prove the $\Gamma$-convergence of the corresponding approximate registration functionals. In Section \ref{sec:numerics}, we derive the discrete version of the optimal control problem and optimality equations, from which we deduce a geodesic shooting algorithm for the diffeomorphic registration of discrete varifolds. Finally, results on $1$- and $2$- varifolds are presented in Section \ref{sec:results}, emphasizing the potentiality of the approach to tackle data structures which are typically challenging for previous algorithms that are designed for point sets and meshes. The MATLAB code used in this paper is made available under the GNU general public license through the Github repository \url{https://github.com/charoncode/Var_LDDMM}.

\section{The space of oriented varifolds}
\label{sec:varifold_space}
The concept of varifold was originally developed in the context of geometric measure theory by \cite{Young42}, \cite{almgren1966plateau} and \cite{allard1972first} for the study of Plateau's problem on minimal surfaces. The interest in registration and shape analysis was evidenced in \cite{Charon2,Charon2017}. In those works, varifolds provide a convenient representation of geometric shapes such as rectifiable curves and surfaces and an efficient approach to define and compute fidelity terms for registration, or to perform clustering, classification in those shape spaces. The main purpose of this section is to introduce varifolds in this latter context. The case of non-oriented shapes was thoroughly investigated in \cite{Charon2}. Later on, the generalized framework of oriented varifold was proposed in \cite{Charon2017} but only for objects of dimension or co-dimension one. In the following, we provide a fully general presentation of oriented varifolds and their properties, that also does not specifically focus on the case of rectifiable varifolds as these previous works did. Although we assume here that all the considered shapes are oriented, we emphasize that the non-oriented framework of \cite{Charon2} can be recovered almost straightforwardly through adequate choices of orientation-invariant kernels as we shall briefly point out later on.

\subsection{Definition} 
\label{subsec:defofvf}
The underlying principle of varifolds is to extend measures of $\R^n$ by incorporating an additional tangent space component. In this work, we will consider such spaces to be oriented. Thus, for a given dimension $0\leq d \leq n$, we first need to introduce the set of all possible $d$-dimensional oriented tangent spaces in $\mathbb{R}^n$:
\begin{definition}
 The $d$-dimensional oriented Grassmannian $\widetilde{G}_d^n$ is the set of all oriented $d$-dimensional linear subspaces of $\mathbb{R}^n$.
\end{definition}
The oriented Grassmannian is a compact manifold of dimension $d(n-d)$ which can be identified to the quotient $SO(n)/(SO(d) \times SO(n-d))$. It is also a double cover of the (non-oriented) Grassmannian $G_d^n$ of $d$-dimensional subspaces of $\R^n$. For practical purposes, a more convenient representation of $\widetilde{G}_d^n$ is the one detailed in the following remark.
\begin{remark}
\label{rem:Grassmannian}
Given $T \in \widetilde{G}_d^n$, there exists a basis $\{u_i \}_{i=1,\ldots,d} \in \R^{n\times d}$ of $T$ such that $[u_1,\cdots,u_d]$ has consistent orientation with $T$. Then the following map, called the oriented Pl\"{u}cker embedding, is well defined and injective,
\begin{align*}
    i_P: \ \widetilde{G}_d^n &\mapsto \{\xi \in \Lambda^d(\mathbb{R}^n) : |\xi| =1\} \\
    T &\mapsto \frac{u_1 \wedge \cdots \wedge u_d}{|u_1 \wedge \cdots \wedge u_d |}.
\end{align*}
This allows to identify $\widetilde{G}^n_d$ as a subset of the unit sphere of $\Lambda^d(\mathbb{R}^n)$ which inherits the topology of the inner product on $\Lambda^d(\mathbb{R}^n)$. We remind that this inner product is defined for any $\xi=\xi_1 \wedge \ldots \wedge \xi_d$, $\eta=\eta_1 \wedge \ldots \wedge \eta_d$ in $\Lambda^d(\mathbb{R}^n)$ by the determinant of the Gram matrix: 
\begin{align}\label{eq:wedge_inner_product}
    \langle \xi,\eta \rangle = \det(\xi_i \cdot \eta_j)_{i,j=1,\ldots,d} 
\end{align}
Through this identification, one can also define the action of linear transformations on $\widetilde{G}^n_d$ as follows 
\begin{align} \label{eq:act_lt_grass}
    A \cdot T := \frac{A u_1 \wedge \cdots \wedge Au_d}{|A u_1 \wedge \cdots \wedge Au_d |}
\end{align}
for any $T \in \widetilde{G}^n_d$ and $A: \mathbb{R}^n \mapsto \mathbb{R}^n$ a linear invertible map.
\end{remark}

Similar to the definition of classical varifolds in \cite{Simon}, we define oriented varifolds as measures on $\mathbb{R}^n \times \widetilde{G}^n_d$. 

\begin{definition}
 An oriented $d$-varifold $\mu$ on $\mathbb{R}^n$ is a nonnegative finite Radon measure on the space $\mathbb{R}^n \times \widetilde{G}^n_d$. Its weight measure $|\mu|$ is defined by $|\mu|(A) := \mu(A\times \widetilde{G}^n_d)$ for all Borel subset $A$ of $\R^n$. We denote by $\mathcal{V}_d$ the space of all oriented $d$-varifolds.
\end{definition}
In the rest of the paper, with a slight abuse of vocabulary, we will often use the word varifold instead of oriented varifold for the sake of concision. Recall that from the Riesz representation theorem, we can alternatively view any varifold $\mu$ as a distribution, i.e. an element of the dual space $C_0(\mathbb{R}^d \times \widetilde{G}^n_d)^*$, where $C_0(\mathbb{R}^d \times \widetilde{G}^n_d)$ denotes the set of continuous functions vanishing at infinity on $\mathbb{R}^d \times \widetilde{G}^n_d$. It is defined for any test function $\omega \in C_0(\mathbb{R}^d \times \widetilde{G}^n_d)$ by:
\begin{equation}
\label{eq:var_mu_distribution}
    (\mu |\omega) \doteq \int_{\mathbb{R}^n \times \widetilde{G}^n_d} \omega(x,T) d\mu(x,T).
\end{equation}
As an additional note, another useful representation of a general varifold in $\mathcal{V}_d$ can be obtained by the disintegration theorem (see \cite{Ambrosio2000} Chap. 2). Namely, if $\mu \in \mathcal{V}_d$, for $|\mu|$-almost every $x$ in $\R^n$, there exists a probability measure $\nu_x$ on $\widetilde{G}^n_d$ such that $x \mapsto \nu_x$ is $|\mu|$-measurable and we can write
\begin{equation}
\label{eq:var_mu_disintegration}
    (\mu |\omega) = \int_{\R^n} \int_{\widetilde{G}^n_d} \omega(x,T) d\nu_x(T) d|\mu|(x).
\end{equation}
In other words, the varifold $\mu$ can be decomposed as its weight measure on $\R^n$ together with a family of tangent space probability measures on the Grassmannian at the different points in the support of $|\mu|$. This is usually referred to as the Young measure representation of $\mu$.

\subsection{Diracs and rectifiable varifolds}
There are a few important families of varifolds which will be relevant for the following. First of those are the Diracs. For $x \in \mathbb{R}^n$ and $T \in \widetilde{G}^n_d$, the associated Dirac varifold $\delta_{(x,T)}$ acts on functions of $C_0(\mathbb{R}^n \times \widetilde{G}^n_d)$ by the relation
\begin{align*}
    (\delta_{(x,T)}|\omega) = \omega(x,T), \ \forall \omega \in C_0(\mathbb{R}^n \times \widetilde{G}^n_d).
\end{align*}
$\delta_{(x,T)}$ can be viewed as a singular particle at position $x$ that carries the oriented $d$-plane $T$.

A second particular class is the one of \textit{rectifiable varifolds}, which are in essence the varifolds representing an oriented shape of dimension $d$. More precisely, given an oriented $d$-dimensional submanifold $X$ of $\mathbb{R}^n$ of finite total $d$-volume, denoting by $T_X(x) \in \widetilde{G}^n_d$ the oriented tangent space at $x \in X$, one can associate to $X$ the varifold $\mu_X$, which is defined for all Borel subset $B\subset \R^n \times \widetilde{G}^n_d$ by $\mu_X(B) = \mathcal{H}^d(\{x \in X | (x,T_X(x)) \in B \})$. Here, $\mathcal{H}^d$ is the $d$-dimensional Hausdorff measure on $\R^n$, i.e. the measure of $d$-volume of subsets of $\R^n$ (we refer the reader to \cite{Simon} for the precise construction and properties of Hausdorff measures). It is then not hard to see that, as an element of $C_0(\mathbb{R}^n \times \widetilde{G}^n_d)^*$,  
\begin{align}\label{eq:vfasdistri}
(\mu_X|\omega) &= \int_{\mathbb{R}^d \times \widetilde{G}^n_d} \omega(x,T) d \mu_X(x,T) \nonumber \\
               &= \int_X \omega(x,T_X(x)) d \mathcal{H}^d(x).
\end{align}
Such a representation $X \mapsto \mu_X$ can be extended to slightly more general objects known as \textit{oriented rectifiable sets}. A subset $X$ of $\mathbb{R}^n$ is said to be a \textit{countably} $\mathcal{H}^d$-\textit{rectifiable set} if $\mathcal{H}^d(X \setminus \cup_{j=1}^{\infty}F_j(\mathbb{R}^d)) = 0$, where $F_j: \mathbb{R}^d \mapsto \mathbb{R}^n$ are Lipschitz function for all $j$ (c.f. \cite{Simon}). We say that $(X,T_X)$ is an \textit{oriented rectifiable set} if $X$ is a countably $d$-rectifiable set and $T_X:X \mapsto \widetilde{G}^n_d$ is a $\mathcal{H}^d$-measurable function such that for $\mathcal{H}^d-a.e.\ x \in X$, $T_X(x)$ is the approximate tangent space of $X$ at $x$ with specified orientation. Rectifiable subsets include both usual submanifolds but also piecewise smooth objects like polyhedra. Given any oriented rectifiable set $(X, T_X)$, we can associate a varifold that we also write $\mu_X$ given again by \eqref{eq:vfasdistri}. The set of those $\mu_X$ will be referred to as the rectifiable oriented varifolds in this paper (note that this is actually more restrictive than the standard definition of rectifiable varifold in the literature which also incorporates an additional multiplicity function). 
\begin{remark}
Rectifiable varifolds still make a very "small" subset of $\mathcal{V}_d$: indeed, in the Young measure representation of \eqref{eq:var_mu_disintegration}, we have in this case the very particular constraint that probability measures $\nu_x$ are Dirac masses, specifically $\nu_x = \delta_{T_X(x)}$. 
\end{remark}

\section{Metrics on varifolds}
\label{sec:metrics_varifold}
In this section, we address the issue of defining adequate metrics on the space $\mathcal{V}_d$. After reviewing some classical metrics and their limitations for the specific applications of this work, we turn to metrics defined through positive definite kernels, for which we extend previous constructions introduced in e.g. \cite{Charon2,Charon2017} and derive the most relevant properties of this class of distances.

\subsection{Standard topologies and metrics on $\mathcal{V}_d$}
\label{subsec:standard_metrics}
As a measure/distribution space, $\mathcal{V}_d$ can be equipped with various topologies and metrics, several of which have been regularly used in various contexts. We discuss a few of those below. 
\begin{itemize}
   \item[$\bullet$]   
\textit{mass norm}: with the previous identification of measures in $\mathcal{V}_d$ with elements of the dual $C_0(\mathbb{R}^n \times \widetilde{G}^n_d)^*$, one can define the following dual metric on $\mathcal{V}_d$:
\begin{align}
   d_{op}(\mu,\nu) \doteq \sup\limits_{|\omega|_{\infty} \leq 1} (\mu -\nu|\omega), \ \forall \mu \in \mathcal{V}_d.
\end{align}
where $|\omega|_{\infty} \doteq \sup_{\mathbb{R}^n \times \widetilde{G}^n_d} |\omega|$. This metric is generally too strong for applications in shape analysis and leads to a discontinuous behavior. Indeed, one can easily verify that for any two Dirac masses $\delta_{(x,T)}$ and $\delta_{(x',T')}$, $d_{op}(\delta_{(x,T)},\delta_{(x',T')}) = 2$ whenever $(x,T) \neq (x',T')$.   

\item[$\bullet$]
\textit{weak-*} topology: a sequence of $d$-varifolds $\{\mu_i\}_i$ converges to $\mu \in \mathcal{V}_d$ in the weak-* topology (denoted by $\mu_i \overset{\ast}{\rightharpoonup} \mu$) if and only if for all $\omega \in C_c(\mathbb{R}^d \times \widetilde{G}^n_d)$ (continuous compactly supported function) 
\begin{align}
    \lim_{i \rightarrow \infty}(\mu_i|\omega) = (\mu|\omega).
\end{align}
In fact, the weak-* topology on $\mathcal{V}_d$ can be metrized by the following distance:
\begin{align*}
    d_*(\mu,\nu) = \sum_{k \in \mathbb{N}} 2^{-k} |(\mu-\nu|\omega_k)|,
\end{align*}
where $\{\omega_k\}_{k\in \mathbb{N}}$ is a dense sequence in $C_c(\mathbb{R}^n \times \widetilde{G}^n_d)$.

\item[$\bullet$]
\textit{Wasserstein metric}: the Wasserstein-1 distance of optimal transport can be expressed in its Kantorovitch dual formulation \cite{villani2008optimal} as
\begin{equation}
    d_{Wass^1}(\mu,\nu) \doteq \sup\limits_{\textrm{Lip}(\omega) \leq 1} |(\mu-\nu|\omega)|.
\end{equation}
where the sup is taken over all Lipschitz regular functions on $\mathbb{R}^n \times \widetilde{G}^n_d$ with Lipschitz constant smaller than one. This metric is however well-suited for measures with the same total mass. Several recent works \cite{Piccoli2014,Chizat2018} have instead proposed generalized Wasserstein distances derived from unbalanced optimal transport. 

\item[$\bullet$]
\textit{Bounded Lipschitz metric}: similar to the previous, the bounded Lipschitz distance (sometimes referred to as the \textit{flat metric}) on $\mathcal{V}_d$ is defined by
\begin{equation}
    d_{BL}(\mu,\nu) \doteq \sup\limits_{\|\omega\|_{\infty}, \textrm{Lip}(\omega) \leq 1} |(\mu-\nu|\omega)|.
\end{equation}
It can be shown (cf. Ch 8 in \cite{bogachev2007measure}) that $d_{BL}$ metrizes the \textit{narrow topology} on $\mathcal{V}_d$, namely the topology for which a sequence $(\mu_i)$ converges to $\mu$ if and only if $\lim_{i \rightarrow \infty}(\mu_i|\omega) = (\mu|\omega)$ for all bounded continuous functions $\omega$.
\end{itemize}

Clearly, the narrow topology is stronger than the weak-* topology. Furthermore, it is also well known that $d_{BL}$ locally metrizes the weak-* topology on $\mathcal{V}_d$, namely:
\begin{prop}\label{prop:dBL_weakstar}
Let $\mu$ and $\{\mu_i\}_{i}$ be varifolds such that the sequence $\{\mu_i\}_{i}$ is tight. Then $\mu_i \overset{\ast}{\rightharpoonup} \mu$ if and only if $d_{BL}(\mu_i,\mu) \rightarrow 0$.
\end{prop}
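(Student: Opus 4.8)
The plan is to reduce the statement to the two facts recalled just above, namely that $d_{BL}$ metrizes the narrow topology on $\mathcal{V}_d$ (Bogachev, Ch.~8) and that the narrow topology is stronger than the weak-$*$ topology. With these in hand, one implication is essentially free: if $d_{BL}(\mu_i,\mu)\to 0$ then $\mu_i$ converges to $\mu$ narrowly, and since narrow convergence implies weak-$*$ convergence we obtain $\mu_i \overset{\ast}{\rightharpoonup}\mu$. This direction does not even use tightness. It therefore remains to prove the converse, i.e. that under the tightness hypothesis weak-$*$ convergence upgrades to narrow convergence; the equivalence $d_{BL}\to 0 \Leftrightarrow$ narrow convergence then closes the argument.

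For the nontrivial direction, write $X = \R^n \times \widetilde{G}^n_d$, which is a locally compact, separable, metrizable space since $\widetilde{G}^n_d$ is compact. Assume $\mu_i \overset{\ast}{\rightharpoonup}\mu$ with $\{\mu_i\}_i$ tight, and fix a bounded continuous $\omega$ on $X$ together with $\epsilon>0$. I would first use tightness to choose a compact set $K\subset X$ with $\sup_i |\mu_i|(X\setminus K)<\epsilon$. I also need the same bound for the limit; this is legitimate because $\mu$, being a finite Radon measure on the Polish space $X$, is automatically inner regular (tight) by Ulam's theorem, so after enlarging $K$ we may assume in addition $|\mu|(X\setminus K)<\epsilon$. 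By local compactness and Urysohn's lemma, pick a cutoff $\chi\in C_c(X)$ with $0\le \chi\le 1$ and $\chi\equiv 1$ on $K$, so that $\omega\chi \in C_c(X)$.

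The estimate then proceeds from the splitting
\begin{equation*}
|(\mu_i-\mu\,|\,\omega)| \le |(\mu_i\,|\,\omega(1-\chi))| + |(\mu_i-\mu\,|\,\omega\chi)| + |(\mu\,|\,\omega(1-\chi))|.
\end{equation*}
Since $0\le 1-\chi\le 1$ vanishes on $K$, the first and third terms are bounded respectively by $\|\omega\|_\infty |\mu_i|(X\setminus K)<\|\omega\|_\infty\,\epsilon$ and $\|\omega\|_\infty |\mu|(X\setminus K)<\|\omega\|_\infty\,\epsilon$. The middle term tends to $0$ as $i\to\infty$ because $\omega\chi$ is continuous with compact support and $\mu_i\overset{\ast}{\rightharpoonup}\mu$. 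Taking $\limsup_i$ gives $\limsup_i |(\mu_i-\mu\,|\,\omega)|\le 2\|\omega\|_\infty\,\epsilon$, and letting $\epsilon\to 0$ shows $(\mu_i|\omega)\to(\mu|\omega)$. As $\omega$ was an arbitrary bounded continuous function, $\mu_i\to\mu$ narrowly, hence $d_{BL}(\mu_i,\mu)\to 0$ by the metrization property.

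The main obstacle, and the only place where the hypotheses are genuinely used, is controlling the mass that could escape to infinity: tightness is exactly what prevents a fixed amount of mass from leaking out of every compact set, which is why weak-$*$ convergence alone is insufficient (a sequence of Diracs drifting to infinity converges weak-$*$ to $0$ but not narrowly). A secondary point requiring care is that the estimate needs a tail bound for the limit $\mu$ as well as for the sequence; I would handle this by invoking inner regularity of the single finite Radon measure $\mu$ rather than assuming it as an extra hypothesis. Everything else is a routine cutoff argument built on the cited metrization result.
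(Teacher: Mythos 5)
Your proof is correct and follows essentially the same route as the paper's: reduce to narrow convergence via the metrization of $d_{BL}$, then use tightness to confine the mass to a compact set $K$ and replace $\omega$ by a compactly supported continuous function agreeing with it on $K$ (you via a Urysohn cutoff $\omega\chi$, the paper via a Tietze extension), so that weak-$*$ convergence of $(\mu_i)$ handles the remaining term. If anything you are slightly more careful: the paper folds the tail bound $\mu(K^c)$ into the tightness hypothesis without comment, whereas you justify it by inner regularity of the finite Radon measure $\mu$, and you also spell out the trivial converse direction that the paper leaves implicit.
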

\begin{proof}
Since $d_{BL}$ metrizes the narrow topology, it suffices to show that $\mu_i$ converges to $\mu$ in the narrow topology. Let $\omega$ be a bounded continuous function defined on $\mathbb{R}^n \times \widetilde{G}^n_d$ and $\varepsilon>0$. By the tightness property, we may choose a compact set $K\subset \mathbb{R}^n \times \widetilde{G}^n_d$ such that $\mu(K^c)+ \sup_i \mu_i(K^c)< \varepsilon/2\|\omega\|_{\infty}$. Let $B$ be an open ball that contains $K$. Define
\begin{align*}
    \eta(x,T)  \doteq  \left\{ \begin{array}{lc}
      \omega(x,T),  &  \ \text{if} \ (x,T) \in K  \\
      0 ,    & \ \text{if} \ (x,T) \in B^c
    \end{array} \right. 
\end{align*}
From Tietz extension theorem, there exists a continuous extension $\tilde{\omega}$ of $\eta$ on $\mathbb{R}^n \times \widetilde{G}^n_d$ such that $\tilde{\omega}|_K =\omega|_K$ and $\tilde{\omega} \in C_c(\mathbb{R}^n \times \widetilde{G}^n_d)$. This implies that
\begin{align*}
    \left| \int_{\mathbb{R}^n \times \widetilde{G}^n_d} \omega d(\mu_i-\mu) \right| 
  &\leq \int_{\mathbb{R}^n \times \widetilde{G}^n_d}|\omega- \tilde{\omega}| d (\mu_i+\mu)
    + \left| \int_{\mathbb{R}^n \times \widetilde{G}^n_d} \tilde{\omega} d (\mu_i-\mu) \right|\\
    &\leq  \left| \int_{\mathbb{R}^n \times \widetilde{G}^n_d} \tilde{\omega} d (\mu_i-\mu) \right| + \varepsilon.
\end{align*}
Taking $\limsup$ on both sides, we see that 
\begin{align*}
    \limsup_{i \rightarrow \infty} \left| \int_{\mathbb{R}^n \times \widetilde{G}^n_d} \omega d(\mu_i-\mu) \right| < \varepsilon.
\end{align*}
Since $\varepsilon$ is arbitrary, we obtain that $\mu_i$ converges to $\mu$ in the narrow topology. 
\end{proof}
As a direct consequence of Proposition \ref{prop:dBL_weakstar}, we have in particular that weak-* convergence and convergence in $d_{BL}$ are equivalent if one restricts to varifolds that are supported in a fixed compact subset of $\mathbb{R}^n \times \widetilde{G}^n_d$. Note also that a very similar result to Proposition \ref{prop:dBL_weakstar} holds when replacing the bounded Lipschitz distance by generalized Wasserstein metrics, as proved in \cite{Piccoli2014}. 

The above metrics on varifolds all originate from classical ones in standard measure theory. Unlike the mass norm, Wasserstein and bounded Lipschitz metrics have nice theoretical properties in terms of shape comparison. However, for the purpose of diffeomorphic registration that we shall tackle below, one needs metrics that are easy to evaluate numerically. This is typically not the case of $d_{Wass^1}$ and $d_{BL}$ expressed above as there is no straightforward way to compute the corresponding suprema over the respective sets of test functions. One line of work has been considering approximations of optimal transport distances with e.g. entropic regularizers for which Sinkhorn-based algorithms can be derived, see for instance the recent work \cite{Feydy2017}. In this paper, we focus on the alternative approach previously developed for currents in \cite{Glaunes2008} and unoriented varifolds in \cite{Charon2} which instead relies on particular Hilbert spaces of test functions, as we detail in the next section.

\subsection{Kernel metrics}
\label{ssec:kernel_metrics}
In this section, we start by defining a general class of pseudo-metrics on $\mathcal{V}_d$ based on positive definite kernels and their corresponding \textit{reproducing kernel Hilbert space} (RKHS). We will then study sufficient conditions on such kernels to recover true metrics before examining the relationship between those kernel metrics and the ones of Section \ref{subsec:standard_metrics}.

\subsubsection{Kernels for varifolds}
We refer the reader to \cite{Aronszajn1950,Hastie,Micheli2014} for a presentation of the construction and main properties of positive kernels and Reproducing Kernel Hilbert Spaces which we do not recall in detail here for the sake of concision. In the context of varifolds, we are interested in defining positive definite kernels on the product $\mathbb{R}^n \times \widetilde{G}^n_d$. Along the lines of previous works like \cite{Charon2,Charon2017}, we restrict to separable kernels for which we have:

\begin{prop} \label{prop:ctru_kernel}
Let $k^{pos}$ and $k^{G}$ be continuous positive definite kernels on $\mathbb{R}^n$ and $\widetilde{G}^n_d$ respectively. Assume in addition that for any $x \in \mathbb{R}^n $, $k^{pos}(x,\cdot) \in C_0(\mathbb{R}^n)$. Then $k:= k^{pos} \otimes k^G$ is a positive definite kernel on $\mathbb{R}^n \times \widetilde{G}^n_d$ and the RKHS $W$ associated to $k$ is continuously embedded in $C_0(\mathbb{R}^n \times \widetilde{G}^n_d)$ i.e. there exists $c_{W}>0$ such that for any $\omega \in W$, we have $\|\omega\|_{\infty} \leq c_W \|\omega\|_{W}$.
\end{prop}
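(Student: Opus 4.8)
The plan is to establish the two assertions separately: first that the separable kernel $k = k^{pos}\otimes k^G$ is a continuous positive definite kernel, and then that its RKHS $W$ embeds continuously into $C_0(\mathbb{R}^n\times\widetilde{G}^n_d)$.

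For the first assertion, continuity of $k((x,T),(x',T')) = k^{pos}(x,x')\,k^{G}(T,T')$ is immediate as a product of continuous functions. For positive definiteness, I would fix finitely many points $(x_i,T_i)$ and observe that the associated Gram matrix is the entrywise (Hadamard) product of $\big(k^{pos}(x_i,x_j)\big)_{i,j}$ and $\big(k^{G}(T_i,T_j)\big)_{i,j}$, both symmetric positive semidefinite by hypothesis. The Schur product theorem gives that their Hadamard product is positive semidefinite, which is precisely the positive definiteness of $k$.

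For the embedding, the key tool is the reproducing property: every $\omega\in W$ satisfies $\omega(z) = \langle\omega,k(z,\cdot)\rangle_W$ and $\|k(z,\cdot)\|_W^2 = k(z,z)$, where $z = (x,T)$. Cauchy--Schwarz then yields
\[ |\omega(z)| \le \|\omega\|_W\,\sqrt{k(z,z)} = \|\omega\|_W\,\sqrt{k^{pos}(x,x)\,k^{G}(T,T)}. \]
I would bound the right-hand side uniformly in $z$: since $\widetilde{G}^n_d$ is compact and $k^G$ continuous, $\sup_T k^{G}(T,T) <\infty$, and bounding $\sup_x k^{pos}(x,x)$ produces a finite constant $c_W := \sup_z\sqrt{k(z,z)}$, whence $\|\omega\|_\infty \le c_W\|\omega\|_W$. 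To upgrade the target space from $C_b$ to $C_0$, I would note that each section $k(z,\cdot) = k^{pos}(x,\cdot)\otimes k^{G}(T,\cdot)$ lies in $C_0$: the factor $k^{pos}(x,\cdot)\in C_0(\mathbb{R}^n)$ by assumption and $k^{G}(T,\cdot)$ is bounded continuous on the compact Grassmannian, so the product vanishes at infinity on the product space. Since finite linear combinations of sections are dense in $W$ and the uniform norm is controlled by the $W$-norm, any $\omega\in W$ is a uniform limit of $C_0$-functions and hence lies in the closed subspace $C_0(\mathbb{R}^n\times\widetilde{G}^n_d)$.

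The main obstacle is the uniform bound on $\sqrt{k(z,z)}$ over the non-compact factor $\mathbb{R}^n$: the embedding constant is exactly $\sup_z\sqrt{k(z,z)}$, and while compactness of $\widetilde{G}^n_d$ disposes of the $k^G$ factor for free, the per-point condition $k^{pos}(x,\cdot)\in C_0$ does not by itself force $\sup_x k^{pos}(x,x)<\infty$. Indeed one can build a continuous positive definite $k^{pos}$ with every section in $C_0$ yet with diagonal blowing up along a sequence escaping to infinity (e.g. disjointly supported bumps of growing height). The argument therefore really uses boundedness of $k^{pos}$, which I would either add explicitly to the hypotheses or read as implicit in the $C_0$ assumption; once $\sup_x k^{pos}(x,x)<\infty$ is secured, the remaining steps are routine.
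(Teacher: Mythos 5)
Your proof is correct and takes essentially the same route as the paper's, which is not written out but deferred to Proposition 4.1 of \shortcite{Charon2}: Schur's product theorem for positive definiteness of the tensor kernel, the reproducing property combined with Cauchy--Schwarz for the uniform bound $|\omega(z)|\leq \|\omega\|_W\sqrt{k(z,z)}$, and density of the kernel sections (which lie in $C_0$) to conclude that $W$ sits inside the closed subspace $C_0(\mathbb{R}^n\times\widetilde{G}^n_d)$ rather than merely $C_b$. Your closing caveat is also justified and is not a defect of your argument: as literally stated the proposition does omit boundedness of $k^{pos}$, and your disjointly-supported bumps of growing height give a genuine counterexample in which the diagonal, hence the embedding constant, blows up; however, the reference the paper invokes assumes $k^{pos}$ bounded, and the paper itself restores this immediately afterwards in Remark \ref{rk:assumption_kernel}, where $k^{pos}(x,y)=\rho(|x-y|^2)$ forces $k^{pos}(x,x)\equiv\rho(0)$, so your decision to read boundedness as an implicit hypothesis matches the intended setting.
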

We recall that the tensor product kernel has the exact expression $k((x,T),(x',T')) = k^{pos}(x,x') k^{G}(T,T')$. The proof of Proposition \ref{prop:ctru_kernel} is a straightforward adaptation of the same result for unoriented varifolds (cf. \cite{Charon2} Proposition 4.1).  

\begin{remark}\label{rk:assumption_kernel}
To simplify the rest of the presentation and in the perspective of later numerical considerations, we will also assume specific forms for $k^{pos}$ and $k^G$, namely that $k^{pos}$ is a translation/rotation invariant radial kernel $k^{pos}(x,y) = \rho(|x-y|^2), \ \forall x,y \in \mathbb{R}^n$, with $\rho(0)>0$, and $k^G(S,T) = \gamma(\langle S,T \rangle), \ \forall S,T \in \widetilde{G}^n_d$ where $\langle \cdot, \cdot \rangle$ is the inner product on $\widetilde{G}^n_d$ inherited from $\Lambda^d(\mathbb{R}^n)$ introduced in remark \ref{rem:Grassmannian}. These assumptions are quite natural as they will eventually induce metrics on varifolds invariant to the action of rigid motion, as we shall explain later. Note that the unoriented framework of \cite{Charon2} can be also recovered in this setting by simply restricting to orientation-invariant kernels $k^{G}$ i.e. such that $\gamma(-t)=\gamma(t)$ for all $t$.    
\end{remark}

Now, if we let $\iota_W:W \hookrightarrow C_0(\mathbb{R}^d \times \widetilde{G}^n_d)$ be the continuous embedding given by Proposition \ref{prop:ctru_kernel} and $\iota_W^*$ its adjoint, for any $\mu \in C_0(\mathbb{R}^n \times \widetilde{G}^n_d)^*$, we have 
\begin{align} \label{eq:vf_as_wstar}
    (\iota^* \mu| \omega) = \int_{\mathbb{R}^d \times \widetilde{G}^n_d} \omega(x,T) d\mu(x,T), \ \forall \omega \in W.
\end{align}
With \eqref{eq:vf_as_wstar}, we may identify $\mu$ as an element of the dual RKHS $W^*$. Note that $\iota_W^*$ is not injective in general, in other words one can have $\mu = \mu'$ in $W^*$ but $\mu \neq \mu'$ in $C_0(\mathbb{R}^n \times \widetilde{G}^n_d)^*$.

In any case, one can compare any two varifolds $\mu,\mu' \in \mathcal{V}_d$ through the Hilbert norm of $W^*$ by defining: 
\begin{equation}
    d_{W^*}(\mu,\mu')^2 = \|\mu-\mu'\|_{W^*}^2 = \|\mu\|_{W^*}^2 -2\langle \mu,\mu' \rangle_{W^*} + \|\mu'\|_{W^*}^2 
\end{equation}
where we use the small abuse of notation of writing $\mu$ and $\mu'$ instead of $\iota_W^* \mu$ and $\iota_W^* \mu'$ on the two right hand sides. Due to the potential non-injectivity of $\iota_W^*$, in general $d_{W^*}$ only induces a pseudo-metric on $\mathcal{V}_d$. 

The main advantage of this construction is that $d_{W^*}$ can be expressed more explicitly based on the reproducing kernel property of $W$. Indeed, given any $\mu$ and $\nu$ in $\mathcal{V}_d$, the inner product between them is given by 
\begin{align} \label{eq:rkp2}
    \langle \mu, \mu' \rangle_{W^*} &= \int_{(\mathbb{R}^d \times \widetilde{G}^n_d)^2} k^{pos}(x,x') k^G(T,T') d\mu(x,T) d \mu'(x',T') \nonumber \\
    &= \int_{(\mathbb{R}^d \times \widetilde{G}^n_d)^2} \rho(|x-x'|^2) \gamma(\langle T,T' \rangle) d\mu(x,T) d \mu'(x',T')
\end{align}
for kernels selected as in Remark \ref{rk:assumption_kernel}.

\subsubsection{Characterization of distances}
As mentioned above, $d_{W^*}$ is a priori a pseudo-distance between varifolds. It's a natural question to ask under which conditions it leads to an actual distance. 

Most past works have addressed this question focusing on the case of varifolds representing submanifolds and reunion of submanifolds \cite{Charon2,Charon2017}. We can first provide an extension of these results to the general case of oriented rectifiable varifolds. A key notion for the rest of this section is the one of $C_0$-universality of kernels:
\begin{definition}
A positive definite kernel $k$ on a metric space $\mathcal{M}$ is called $C_0$-universal when its RKHS is dense in $C_0(\mathcal{M})$ for the uniform convergence topology.  
\end{definition}
$C_0$-universality has been studied in great length in such works as \cite{Carmeli2010,sriperumbudur2011universality}. In particular, one can provide characterizations of $C_0$-universality for certain classes of kernels and spaces $\mathcal{M}$. In the case of translation-invariant kernels on $\mathcal{M}=\R^n$ for instance, it has been established that $C_0$-universal kernels are the ones which can be expressed through the Fourier transform of finite Borel measures with full support on $\R^n$, which includes: compactly-supported kernels, Gaussian kernels, Laplacian kernels... With the previous definition, we have the following sufficient condition: 
\begin{theorem}
\label{thm:dist_rectifiable_var}
Suppose $k^{pos}$ is a $C_0$-universal kernel on $\R^n$, $\gamma(1)>0$ and $\gamma(t) \neq \gamma(-t), \ \forall t \in [-1,1]$. Let $(X,T(\cdot))$ and $(Y,S(\cdot))$ be two oriented $\mathcal{H}^d$-rectifiable sets with $\mathcal{H}^d(X)$, $\mathcal{H}^d(Y)< \infty$. If $\left\| \mu_X-\mu_Y \right\|_{W'}=0$, then $\mathcal{H}^d(X\bigtriangleup Y) =0$ and $T = S \ \mathcal{H}^d$-$a.e$. 
\end{theorem}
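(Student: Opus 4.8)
The plan is to convert the vanishing of the dual norm into a one-parameter family of scalar identities indexed by $\widetilde{G}^n_d$, promote these to an equality of measures on $\mathbb{R}^n$ via the $C_0$-universality of $k^{pos}$, and then read off both conclusions from a pointwise analysis that treats the overlap $X\cap Y$ and its complement separately. First I would note that $\|\mu_X-\mu_Y\|_{W^*}=0$ forces $(\mu_X-\mu_Y\,|\,\omega)=0$ for every $\omega\in W$, by the Cauchy--Schwarz inequality for the dual inner product. Testing against the kernel sections $\omega=k((x_0,T_0),\cdot)=k^{pos}(x_0,\cdot)\,\gamma(\langle T_0,\cdot\rangle)\in W$ and using \eqref{eq:vfasdistri}, this reads
\[
\int_X k^{pos}(x_0,x)\,\gamma(\langle T_0,T(x)\rangle)\,d\mathcal{H}^d(x)
=\int_Y k^{pos}(x_0,y)\,\gamma(\langle T_0,S(y)\rangle)\,d\mathcal{H}^d(y)
\]
for every $x_0\in\mathbb{R}^n$ and $T_0\in\widetilde{G}^n_d$. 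Since finite linear combinations of $\{k^{pos}(x_0,\cdot)\}$ are dense in $W^{pos}$, which is in turn dense in $C_0(\mathbb{R}^n)$ by $C_0$-universality, and since both sides are bounded linear functionals of the $x$-test function for $\|\cdot\|_\infty$ (because $\mathcal{H}^d(X),\mathcal{H}^d(Y)<\infty$ and $\gamma$ is bounded on $[-1,1]$), I would extend the identity so that it holds with $k^{pos}(x_0,\cdot)$ replaced by any $f\in C_0(\mathbb{R}^n)$.

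For each fixed $T_0$ this means that the two finite signed measures $\gamma(\langle T_0,T(\cdot)\rangle)\,\mathcal{H}^d\mres X$ and $\gamma(\langle T_0,S(\cdot)\rangle)\,\mathcal{H}^d\mres Y$ coincide on $\mathbb{R}^n$. Introducing the common finite measure $m=\mathcal{H}^d\mres X+\mathcal{H}^d\mres Y$ with Radon--Nikodym densities $a=\tfrac{d(\mathcal{H}^d\mres X)}{dm}$ and $b=\tfrac{d(\mathcal{H}^d\mres Y)}{dm}$, equality of measures gives $a(x)\,\gamma(\langle T_0,T(x)\rangle)=b(x)\,\gamma(\langle T_0,S(x)\rangle)$ for $m$-almost every $x$; letting $T_0$ range over a countable dense subset of $\widetilde{G}^n_d$ and using continuity in $T_0$ of both sides, this holds on a single full-measure set for all $T_0$ simultaneously. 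On $X\setminus Y$ one has $a=1,b=0$, so $\gamma(\langle T_0,T(x)\rangle)=0$ for all $T_0$; choosing $T_0=T(x)$ yields $\gamma(1)=0$, contradicting $\gamma(1)>0$. Hence $\mathcal{H}^d(X\setminus Y)=0$, and symmetrically $\mathcal{H}^d(Y\setminus X)=0$, giving $\mathcal{H}^d(X\triangle Y)=0$.

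The main obstacle is the overlap $X\cap Y$, where $a=b\neq0$ and the identity only gives $\gamma(\langle T_0,T(x)\rangle)=\gamma(\langle T_0,S(x)\rangle)$ for all $T_0$. Here I would invoke the standard geometric measure theory fact that $\mathcal{H}^d$-almost everywhere on $X\cap Y$ the approximate tangent planes of $X$ and $Y$ agree as \emph{unoriented} subspaces, so that $S(x)=\pm T(x)$. If $S(x)=-T(x)$ then $\langle T_0,S(x)\rangle=-\langle T_0,T(x)\rangle$, and taking $T_0=T(x)$ gives $\gamma(1)=\gamma(-1)$, contradicting the hypothesis $\gamma(t)\neq\gamma(-t)$. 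Thus $S(x)=T(x)$ for $\mathcal{H}^d$-a.e.\ $x\in X\cap Y$, which together with $\mathcal{H}^d(X\triangle Y)=0$ yields $T=S$ $\mathcal{H}^d$-almost everywhere. The delicate points are precisely this GMT coincidence of unoriented tangents on the overlap --- which is what reduces the orientation question to the two-valued sign ambiguity settled by $\gamma(1)\neq\gamma(-1)$ --- and the passage to an a.e.\ statement valid for all $T_0$ at once, handled by the countable-dense/continuity argument above.
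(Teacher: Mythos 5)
Your proof is correct, and it takes a genuinely different route from the paper's. The paper argues by contradiction twice---once assuming $\mathcal{H}^d(X\setminus Y)>0$ and once assuming the set $\{T=-S\}$ has positive measure---each time combining Lusin's theorem with an upper-density argument to locate a point $z_0$ whose neighborhoods meet the bad set in positive measure, and then pairing $\mu_X-\mu_Y$ with test functions $h_j\otimes\gamma(\langle\cdot,T(z_0)\rangle)$, where $h_j\in W^{pos}$ uniformly approximates a $C_c$ function that approximates an indicator in $L^1$; the pairing tends to a strictly positive integral, a contradiction. You instead convert $\|\mu_X-\mu_Y\|_{W^*}=0$ into the equality, for each fixed $T_0$, of the finite signed measures $\gamma(\langle T_0,T(\cdot)\rangle)\,\mathcal{H}^d\mres X$ and $\gamma(\langle T_0,S(\cdot)\rangle)\,\mathcal{H}^d\mres Y$ (uniform density of the span of kernel sections in $C_0(\R^n)$ plus Riesz representation), then use Radon--Nikodym with respect to $\mathcal{H}^d\mres X+\mathcal{H}^d\mres Y$ and a countable-dense/continuity upgrade to get a single a.e.\ pointwise identity valid for all $T_0$ simultaneously, after which both conclusions are pure algebra: taking $T_0=T(x)$ contradicts $\gamma(1)>0$ on $X\setminus Y$, and $\gamma(1)\neq\gamma(-1)$ settles the orientation on $X\cap Y$. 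Both proofs rest on the same geometric ingredient---the locality fact that the unoriented approximate tangent planes of $X$ and $Y$ coincide $\mathcal{H}^d$-a.e.\ on $X\cap Y$ (cf.\ \cite{Simon})---which you invoke explicitly, while the paper uses it tacitly when it treats $T=-S$ as the only possible discrepancy on the overlap; in a written version you should supply a precise reference for it, since it is the one nontrivial geometric step. Your route buys modularity (no Lusin or density-point arguments at all) and a slightly weaker hypothesis: you only need $\gamma(1)\neq\gamma(-1)$, whereas the paper, anchoring its Grassmannian test function at a nearby point $z_0$ rather than at $x$ itself, needs $\gamma(t)\neq\gamma(-t)$ for $t$ in a neighborhood of $1$ together with a constant-sign argument for its integrand. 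What the paper's approach buys in exchange is lighter machinery: it never forms measure equalities or Radon--Nikodym densities, only explicit pairings with RKHS test functions.
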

The full proof can be found in the Appendix. Note that the first part of the proof directly gives an equivalent statement for unoriented rectifiable varifolds (if one instead assumes $\gamma(t) = \gamma(-t)$ for all $t$), generalizing the result of \cite{Charon2}.

However, the previous proposition does not necessarily lead to a distance on the full space $\mathcal{V}_d$. Counter-examples in the case $d=1$ are discussed for example in \cite{hsieh2019diffeomorphic}. To recover a true distance on $\mathcal{V}_d$, one needs the previous map $\iota_W^{*}$ or equivalently the map
\begin{align}\label{eq:featuremap}
    \mu \mapsto \int_{\mathbb{R}^d \times \widetilde{G}^n_d} k(\cdot,(y,T)) d \mu(y,T), \ \mu \in C_0(\mathbb{R}^d \times \widetilde{G}^n_d)^*
\end{align}
to be injective. As follows from Theorem 6 in \cite{sriperumbudur2011universality}, this is in fact guaranteed when the kernel $k$ on the product space $\mathbb{R}^n \times \widetilde{G}^n_d$ is $C_0$-universal, specifically
\begin{theorem}
\label{thm:dist_general_var}
The pseudo-distance $d_{W^*}$ induces a distance between signed measures of $\mathbb{R}^n \times \widetilde{G}^n_d$ if and only if $k$ is $C_0$-universal on $\mathbb{R}^n \times \widetilde{G}^n_d$. In particular, a sufficient condition for $d_{W^*}$ to be a distance on $\mathcal{V}_d$ is that $k^{pos}$ and $k^G$ are $C_0$-universal kernels on $\R^n$ and $\widetilde{G}^n_d$ respectively.
\end{theorem}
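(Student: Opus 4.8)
The plan is to reduce the statement to an injectivity property of the adjoint embedding $\iota_W^*$ and then to translate that injectivity into $C_0$-universality by Hahn--Banach duality. The starting point is the identity $d_{W^*}(\mu,\mu') = \|\iota_W^*(\mu-\mu')\|_{W^*}$ together with the fact that $\|\cdot\|_{W^*}$ is a genuine norm on the Hilbert space $W^*$. Hence $d_{W^*}$ separates points, i.e. is a true distance rather than a mere pseudo-distance, exactly when $\iota_W^*$ is injective on finite signed Radon measures, equivalently when the feature map of \eqref{eq:featuremap} is injective. The whole theorem therefore amounts to the single equivalence: $\iota_W^*$ is injective if and only if $k$ is $C_0$-universal.

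For the equivalence I would argue by duality. By \eqref{eq:vf_as_wstar} and the reproducing property, $\iota_W^*\mu = 0$ in $W^*$ is equivalent to $\int \omega \, d\mu = 0$ for every $\omega \in W$, i.e. to $\mu$ annihilating $W$ as a continuous linear functional on $C_0(\mathbb{R}^n\times\widetilde{G}^n_d)$ (recall that finite signed measures are precisely the elements of $C_0^*$ via Riesz representation). If $k$ is $C_0$-universal then $W$ is dense in $C_0$, so a continuous functional vanishing on $W$ vanishes identically, giving $\mu=0$ and hence injectivity. Conversely, if $W$ is \emph{not} dense, Hahn--Banach supplies a nonzero $\mu\in C_0^*$ annihilating $W$; splitting $\mu=\mu_+-\mu_-$ into its Jordan parts produces two distinct \emph{nonnegative} measures with $d_{W^*}(\mu_+,\mu_-)=0$, so $d_{W^*}$ fails to be a distance even on $\mathcal{V}_d$. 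This is exactly the content of Theorem 6 in \shortcite{sriperumbudur2011universality}, which may be cited directly, but the short argument above keeps the equivalence self-contained.

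For the ``in particular'' part I would show that the tensor product of two $C_0$-universal kernels is $C_0$-universal and then invoke the equivalence just established. Recall that the RKHS $W$ of $k=k^{pos}\otimes k^G$ is the Hilbert tensor product $W^{pos}\otimes W^G$, so every elementary tensor $f\otimes g$ with $f\in W^{pos}$ and $g\in W^G$ belongs to $W$. Suppose a finite signed measure $\mu$ annihilates $W$; then $\int f(x)g(T)\,d\mu(x,T)=0$ for all such $f,g$. Fixing $g\in W^G$, the finite signed measure $\mu_g:=(\pi_{\mathbb{R}^n})_{\#}(g\,\mu)$ on $\mathbb{R}^n$ (the image under the position projection of $\mu$ weighted by the bounded function $g$) satisfies $\int f\,d\mu_g=0$ for all $f\in W^{pos}$, so $C_0$-universality of $k^{pos}$ forces $\mu_g=0$; equivalently $\int h(x)g(T)\,d\mu(x,T)=0$ for all $h\in C_0(\mathbb{R}^n)$ and all $g\in W^G$. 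Now fixing $h\in C_0(\mathbb{R}^n)$ and arguing symmetrically with the Grassmann projection, the measure $\nu_h:=(\pi_{\widetilde{G}^n_d})_{\#}(h\,\mu)$ satisfies $\int g\,d\nu_h=0$ for all $g\in W^G$, and $C_0$-universality of $k^G$ on the compact manifold $\widetilde{G}^n_d$ forces $\nu_h=0$. Combining the two steps yields $\int h(x)g(T)\,d\mu(x,T)=0$ for all $h\in C_0(\mathbb{R}^n)$ and all $g\in C(\widetilde{G}^n_d)$. The linear span of such products is a subalgebra of $C_0(\mathbb{R}^n\times\widetilde{G}^n_d)$ that separates points and vanishes nowhere, so the Stone--Weierstrass theorem for locally compact spaces makes it dense, whence $\mu=0$. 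Thus no nonzero signed measure annihilates $W$, $W$ is dense in $C_0$, and $k$ is $C_0$-universal.

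The routine ingredients (the Riesz identification, boundedness of the weighted marginals, and the explicit RKHS of a tensor kernel) are standard; the step requiring genuine care is the passage from vanishing of the two separate marginals to vanishing of $\mu$ against \emph{all} product functions and finally to $\mu=0$. This is where compactness of $\widetilde{G}^n_d$ enters crucially: it guarantees $1\in C(\widetilde{G}^n_d)$, which is precisely what lets the product algebra separate points of the non-compact space $\mathbb{R}^n\times\widetilde{G}^n_d$ and vanish nowhere, so that Stone--Weierstrass applies. I expect this density step, together with correctly setting up the weighted marginals as pushforwards of $g\,\mu$ and $h\,\mu$, to be the main technical obstacle; the remainder is a direct unwinding of the duality between annihilators and dense subspaces.
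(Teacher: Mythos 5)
Your proof is correct, but it does considerably more than the paper, which in fact contains no proof of this theorem: the equivalence between metric separation and $C_0$-universality is delegated entirely to Theorem 6 of \shortcite{sriperumbudur2011universality}, and the ``in particular'' claim --- sufficiency of $C_0$-universality of $k^{pos}$ and $k^G$ separately --- is stated without any argument. Your Hahn--Banach duality argument is essentially a reconstruction of the cited theorem (dense subspace iff trivial annihilator), with the nice extra touch that the Jordan decomposition $\mu=\mu_+-\mu_-$ shows that failure of universality already ruins the distance property on the cone $\mathcal{V}_d$ of nonnegative measures, not merely on signed ones. Where you genuinely add content is the tensor-product step: the proof that $k^{pos}\otimes k^G$ is $C_0$-universal via the weighted marginals $(\pi_{\R^n})_{\#}(g\,\mu)$ and $(\pi_{\widetilde{G}^n_d})_{\#}(h\,\mu)$ followed by Stone--Weierstrass fills precisely the gap the paper glosses over, and your two-pass bootstrapping (first upgrading $f\in W^{pos}$ to $h\in C_0(\R^n)$, then $g\in W^G$ to $g\in C(\widetilde{G}^n_d)$) is the right way to organize it, since the order of quantifiers works out cleanly. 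One small correction to your closing commentary: compactness of $\widetilde{G}^n_d$ is a convenience here, not a crucial ingredient --- the span of products $h\otimes g$ with $h$ and $g$ both vanishing at infinity already separates points and vanishes nowhere (choose $h$ with $h(x_1)=1$, $h(x_2)=0$ and $g$ with $g(T_1)=1$), so the Stone--Weierstrass step would survive a non-compact second factor; what compactness actually buys is only that $C_0(\widetilde{G}^n_d)=C(\widetilde{G}^n_d)$ contains the constants, which slightly shortens the separation argument.
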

Note that these conditions are more restrictive than in Theorem \ref{thm:dist_rectifiable_var}. To our knowledge, there is no simple characterization for general $C_0$-universal kernels on the Grassmannian. However, within the setting of Remark \ref{rk:assumption_kernel}, one easily constructs $C_0$-universal kernels by restriction (based on the Pl\"{u}cker embedding) of $C_0$-universal kernels defined on the vector space $\Lambda^d(\R^n)$.

\subsubsection{Comparison with classical metrics}
We now study more precisely the topology induced by the (pseudo) distance $d_{W^*}$ on $\mathcal{V}_d$ in comparison with the ones defined in Section \ref{subsec:standard_metrics}. First of all, we observe that, for any $\omega \in W$ with $\|\omega\|_{W} \leq 1$, one must have $\| \omega \|_{\infty} \leq c_W$, where $c_W$ is the embedding constant of Proposition \ref{prop:ctru_kernel}. Thus, for any $\mu$ and $\mu'$ in $\mathcal{V}_d$, we have
\begin{equation} 
\label{eq:dbl_wstar}
    \| \mu-\mu'\|_{W^*} = \sup_{\omega \in W,\ \|\omega\|_{W \leq 1}} \int_{\mathbb{R}^d \times \widetilde{G}^n_d } \omega \ d(\mu-\mu') \leq c_W d_{op}(\mu,\mu').
\end{equation}
From the above inequalities we see that convergence in $d_{op}$ implies convergence in $d_{W^*}$.

\begin{remark}
\label{rmk:dom_W_BL}
With more assumptions on the regularity of the kernel $k$, namely if $W$ is continuously embedded in $C_0^1(\mathbb{R}^d \times \widetilde{G}^n_d)$, following a similar reasoning as above, one obtains the bound $\| \mu-\mu'\|_{W^*} \leq c_{W} d_{BL}(\mu,\mu')$.  
\end{remark}

Suppose $\mu_i$  converges to $\mu$ in narrow topology. Since the map $(\nu_1,\nu_2) \mapsto \nu_1 \otimes \nu_2$ is continuous with respect to the narrow topology, we have
\begin{align*}
    \|\mu_i\|_{W^*}^2 &= \int_{(\mathbb{R}^d \times \widetilde{G}^n_d)^2} k((x,S),(y,T)) d\mu_i(x,S) d \mu_i(y,T) \\
    &\rightarrow \int_{(\mathbb{R}^d \times \widetilde{G}^n_d)^2} k((x,S),(y,T)) d\mu(x,S) d \mu(y,T) \\
    &= \|\mu\|_{W^*}^2,
\end{align*}
as $i \rightarrow \infty$.
Also, it's clear that $\lim_{i \rightarrow \infty}\langle \mu_i, \mu \rangle_{W^*} \rightarrow \|\mu\|_{W^*}^2$ and hence $\mu_i \rightarrow \mu$ with respect to $d_{W^*}$. To summarize the discussion above:

\begin{prop}\label{prop:top_comp}
Let $\{\mu_i\}_i$ and $\mu$ be varifolds in $\mathcal{V}_{d}$ and assume that $\mu_i \rightarrow \mu$ with respect to the operator norm or the narrow topology, then $\mu_i \rightarrow \mu$ in $W^*$.
\end{prop}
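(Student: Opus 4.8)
The plan is to treat the two hypotheses separately, since each reduces to an estimate essentially already assembled in the discussion preceding the statement. For the operator-norm hypothesis, I would simply invoke the inequality \eqref{eq:dbl_wstar}, which gives $\|\mu_i-\mu\|_{W^*} \leq c_W\, d_{op}(\mu_i,\mu)$ for all $i$. Hence $d_{op}(\mu_i,\mu)\to 0$ forces $d_{W^*}(\mu_i,\mu)\to 0$ immediately, and this case requires no further work.

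For the narrow-topology hypothesis, I would expand the squared distance as $d_{W^*}(\mu_i,\mu)^2 = \|\mu_i\|_{W^*}^2 - 2\langle \mu_i,\mu\rangle_{W^*} + \|\mu\|_{W^*}^2$ and exploit the kernel representation \eqref{eq:rkp2}, which exhibits each of the three inner products as the integral of the single fixed kernel $k = k^{pos}\otimes k^G$ against a product measure, namely $\mu_i\otimes\mu_i$, $\mu_i\otimes\mu$, and $\mu\otimes\mu$. The point is that $k$ is bounded and continuous on $(\mathbb{R}^n\times\widetilde{G}^n_d)^2$: continuity follows from that of $\rho$, of $\gamma$, and of the inner product on $\widetilde{G}^n_d$; boundedness follows from $|k^{pos}(x,y)|\leq\rho(0)$ (Cauchy--Schwarz applied to the positive definite $k^{pos}$, using $k^{pos}(x,x)=\rho(0)$) together with the continuity of $\gamma$ on the compact Grassmannian. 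Since narrow convergence is precisely convergence of integrals against bounded continuous functions, it then suffices to propagate the narrow convergence of $\mu_i$ to the product measures: if $\mu_i\otimes\mu_i \to \mu\otimes\mu$ and $\mu_i\otimes\mu\to\mu\otimes\mu$ narrowly, then $\|\mu_i\|_{W^*}^2\to\|\mu\|_{W^*}^2$ and $\langle\mu_i,\mu\rangle_{W^*}\to\|\mu\|_{W^*}^2$, so that $d_{W^*}(\mu_i,\mu)^2\to\|\mu\|_{W^*}^2-2\|\mu\|_{W^*}^2+\|\mu\|_{W^*}^2=0$.

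The only genuinely delicate point is the joint narrow continuity of the tensor product $(\nu_1,\nu_2)\mapsto\nu_1\otimes\nu_2$, i.e. the claim that narrow convergence of the factors yields narrow convergence of the product. I would justify it by first observing that narrow convergence of the finite nonnegative measures $\mu_i\to\mu$ forces convergence of total masses (test against the constant function $1$), which supplies the uniform mass bound needed to control products; one then verifies narrow convergence of $\mu_i\otimes\mu_i$ against test functions of the separated form $\varphi(x,S)\psi(y,T)$ with $\varphi,\psi$ bounded continuous, and extends to arbitrary bounded continuous test functions on the product space by a standard density argument. With this continuity in hand the remainder is pure bookkeeping, and the two cases combine to give the stated convergence in $W^*$.
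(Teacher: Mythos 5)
Your proposal is correct and follows essentially the same route as the paper: the operator-norm case is dispatched by the embedding inequality \eqref{eq:dbl_wstar}, and the narrow-topology case by expanding $d_{W^*}(\mu_i,\mu)^2$ via the kernel representation \eqref{eq:rkp2} and invoking the narrow continuity of $(\nu_1,\nu_2)\mapsto\nu_1\otimes\nu_2$. The only difference is that you sketch a justification (mass bounds, separated test functions, density) for the tensor-product continuity that the paper simply asserts as known, which is a welcome but inessential addition.
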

\begin{remark}
We emphasize that the result of Proposition \ref{prop:top_comp} only requires the assumptions of Proposition \ref{prop:ctru_kernel} and thus holds whether $\iota$ is injective or not. 
\end{remark}

As for the weak-* topology, with the $C_0$-universality assumption of Theorem \ref{thm:dist_general_var} and restricting to varifolds with bounded total mass, we show that $d_{W^*}$ induces a topology stronger than weak-* convergence:

\begin{prop}\label{prop:dW_finer_weakstar}
If $k$ is $C_0$-universal, then the topology induced by $d_{W^*}$ is finer than the weak-* topology on $\mathcal{V}_{d,M} \doteq \{\mu \in \mathcal{V}_d \ \text{s.t} \ |\mu|(\mathbb{R}^n) \leq M \}$ for any fixed $M>0$.
\end{prop}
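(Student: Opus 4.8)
The plan is to recast the statement as a comparison of convergent sequences. Since $C_0(\mathbb{R}^n \times \widetilde{G}^n_d)$ is separable, the weak-* topology is metrizable on the bounded set $\mathcal{V}_{d,M}$ (indeed $d_*$ metrizes it), and under the $C_0$-universality hypothesis $d_{W^*}$ is a genuine metric by Theorem \ref{thm:dist_general_var}. The assertion that the $d_{W^*}$-topology is finer than the weak-* topology is then equivalent to continuity of the identity map $(\mathcal{V}_{d,M}, d_{W^*}) \to (\mathcal{V}_{d,M}, \text{weak-*})$, and since both topologies are metrizable on $\mathcal{V}_{d,M}$, this is in turn equivalent to sequential continuity. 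Thus it suffices to prove: whenever $\mu_i, \mu \in \mathcal{V}_{d,M}$ satisfy $d_{W^*}(\mu_i, \mu) \to 0$, one has $(\mu_i|\omega) \to (\mu|\omega)$ for every $\omega \in C_c(\mathbb{R}^n \times \widetilde{G}^n_d)$.

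To establish this, I fix such an $\omega$ together with $\varepsilon > 0$. The $C_0$-universality of $k$ means precisely that $W$ is dense in $C_0(\mathbb{R}^n \times \widetilde{G}^n_d)$ for the uniform norm; since $\omega \in C_c \subset C_0$, I may choose $\omega_\varepsilon \in W$ with $\|\omega - \omega_\varepsilon\|_\infty < \varepsilon$. I then split
\[
    |(\mu_i - \mu|\omega)| \leq |(\mu_i - \mu|\omega - \omega_\varepsilon)| + |(\mu_i - \mu|\omega_\varepsilon)|.
\]
For the first term I invoke the uniform mass bound: because $|\mu_i|(\mathbb{R}^n), |\mu|(\mathbb{R}^n) \leq M$, I get $|(\mu_i - \mu|\omega - \omega_\varepsilon)| \leq \|\omega - \omega_\varepsilon\|_\infty \, (|\mu_i| + |\mu|)(\mathbb{R}^n) \leq 2M\varepsilon$, and this bound is independent of $i$. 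For the second term, $\omega_\varepsilon$ lies in $W$, so by \eqref{eq:vf_as_wstar}, the $W$--$W^*$ duality and the definition of the dual norm,
\[
    |(\mu_i - \mu|\omega_\varepsilon)| = |(\iota_W^*(\mu_i - \mu)|\omega_\varepsilon)| \leq \|\mu_i - \mu\|_{W^*} \|\omega_\varepsilon\|_W = d_{W^*}(\mu_i,\mu)\,\|\omega_\varepsilon\|_W,
\]
which tends to $0$ as $i \to \infty$ since $\|\omega_\varepsilon\|_W$ is a fixed finite constant. Taking $\limsup_i$ yields $\limsup_i |(\mu_i - \mu|\omega)| \leq 2M\varepsilon$, and letting $\varepsilon \to 0$ gives $(\mu_i|\omega) \to (\mu|\omega)$, as required.

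The essential point—and the sole place where the restriction to $\mathcal{V}_{d,M}$ enters—is the uniform control of the approximation error by the common mass bound $M$: without bounded total mass, the first term above could not be made small uniformly in $i$, and the implication would fail. The role of $C_0$-universality is only to supply the dense family $W \subset C_0$ on which $d_{W^*}$ directly dominates the pairing. I expect the only subtle bookkeeping to be the metrizability reduction in the first paragraph, namely confirming that the topological "finer than" conclusion genuinely follows from the sequential statement; this is guaranteed by the separability of $C_0(\mathbb{R}^n \times \widetilde{G}^n_d)$, which makes both competing topologies metrizable on the bounded set $\mathcal{V}_{d,M}$.
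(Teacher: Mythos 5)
Your proof is correct and takes essentially the same route as the paper's: both arguments approximate the test function by an element of $W$ (which is exactly what $C_0$-universality provides), use the uniform mass bound $M$ to control the approximation error independently of $i$, and bound the remaining term by $\|\mu_i-\mu\|_{W^*}\|\omega_\varepsilon\|_W$ via duality. Your opening paragraph merely makes explicit the reduction from the topological statement to sequential convergence, which the paper leaves implicit (and which in fact only requires first countability of the $d_{W^*}$-topology, not metrizability of the weak-* topology).
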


\begin{proof}
 Let $\{\mu_i\}_i$ and $\mu$ be varifolds in $\mathcal{V}_{d,M}$ and assume that $\lim_{i \rightarrow \infty}d_{W^*}(\mu_i,\mu) = 0$. For any $f \in C_0(\mathbb{R}^d \times \widetilde{G}^n_d)$ and $\varepsilon>0$, there exists a $g \in W$ such that $\|g-f\| < \varepsilon/2M$. Then we obtain that $\mu_i \overset{\ast}{\rightharpoonup} \mu$ from the following inequalities:
\begin{equation*}
    |(\mu_i-\mu|f)| \leq |(\mu_i| f-g)| + |(\mu| g-f)| + |(\mu_i - \mu|g)| \leq  \varepsilon + \|\mu_i -\mu \|_{W^*} \|g\|_W.
\end{equation*}

\end{proof}

Note that the topology induced by $d_{W^*}$ may be strictly finer on $\mathcal{V}_{d,M}$. Indeed, if $\rho(0),\gamma(1)>0$, consider $\mu_i = \delta_{(x_i,S)}$, where $\lim_{i \rightarrow \infty}|x_i| = \infty$ and $S\in \widetilde{G}^n_d$ fixed. Then $\mu_i \overset{\ast}{\rightharpoonup} 0$ while $\|\mu_i\|_{W^*}^2 =  \rho(0) \gamma(1) > 0$  for all $i$. Yet, by combining Propositions \ref{prop:dBL_weakstar}, \ref{prop:top_comp} and \ref{prop:dW_finer_weakstar}, we have the following 

\begin{corollary}
Let $M>0$ and $K \subset \R^n \times \widetilde{G}^n_d$ be a compact subset. If $k$ is $C_0$-universal, then $d_{W^*}$ metrizes the weak-* convergence of varifolds on $\mathcal{V}_{d,M,K} \doteq \{\mu \in \mathcal{V}_d \ \text{s.t} \ |\mu|(\mathbb{R}^n) \leq M, \ \text{supp}(\mu)\subset K \}.$
\end{corollary}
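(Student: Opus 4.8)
The plan is to show that, on $\mathcal{V}_{d,M,K}$, the topology induced by $d_{W^*}$ and the subspace weak-* topology are both metrizable, and then to verify that they have the same convergent sequences; for metrizable topologies this is enough to force the two topologies to coincide. Metrizability is already available: under the $C_0$-universality hypothesis, Theorem \ref{thm:dist_general_var} guarantees that $d_{W^*}$ is a genuine distance, while the weak-* topology on all of $\mathcal{V}_d$, and hence on the subset $\mathcal{V}_{d,M,K}$, is metrized by the distance $d_*$ of Section \ref{subsec:standard_metrics}. So it suffices to prove that for a sequence $\{\mu_i\}\subset \mathcal{V}_{d,M,K}$ and $\mu\in\mathcal{V}_{d,M,K}$ one has $\mu_i \overset{\ast}{\rightharpoonup}\mu$ if and only if $d_{W^*}(\mu_i,\mu)\to 0$.

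One implication is immediate. Since $\mathcal{V}_{d,M,K}\subset\mathcal{V}_{d,M}$ and $k$ is $C_0$-universal, Proposition \ref{prop:dW_finer_weakstar} tells us the $d_{W^*}$-topology is finer than the weak-* topology on $\mathcal{V}_{d,M}$, so $d_{W^*}(\mu_i,\mu)\to 0$ implies $\mu_i \overset{\ast}{\rightharpoonup}\mu$. For the converse, the idea is to exploit the common compact support. First I would note that any sequence in $\mathcal{V}_{d,M,K}$ is automatically tight: taking the compact set $K$ itself gives $\mu_i(K^c)=0$ for every $i$, so the tightness hypothesis of Proposition \ref{prop:dBL_weakstar} holds trivially. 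Consequently, if $\mu_i \overset{\ast}{\rightharpoonup}\mu$, Proposition \ref{prop:dBL_weakstar} yields $d_{BL}(\mu_i,\mu)\to 0$, i.e. $\mu_i\to\mu$ in the narrow topology (recall that $d_{BL}$ metrizes narrow convergence). Narrow convergence in turn implies $d_{W^*}(\mu_i,\mu)\to 0$ by Proposition \ref{prop:top_comp}, which closes the chain.

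It then remains to invoke the metrizability argument: two metrizable topologies on a common set that share the same convergent sequences (with the same limits) have the same sequentially closed sets, hence the same closed sets, hence coincide. This upgrades the sequential equivalence established above to the genuine statement that $d_{W^*}$ metrizes weak-* convergence on $\mathcal{V}_{d,M,K}$.

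I expect the only real content — and the single place where the compactness of $K$ is used rather than merely the mass bound $M$ — to be the tightness observation in the converse direction. This is precisely what upgrades weak-* convergence to narrow convergence so that Proposition \ref{prop:top_comp} can be applied; indeed the example following Proposition \ref{prop:dW_finer_weakstar} (Dirac masses escaping to infinity) shows that without such a support constraint the two topologies can differ, so the compactness hypothesis is genuinely needed here.
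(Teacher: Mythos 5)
Your proof is correct and follows essentially the same route as the paper, which obtains the corollary precisely by combining Propositions \ref{prop:dBL_weakstar}, \ref{prop:top_comp} and \ref{prop:dW_finer_weakstar}: one direction is Proposition \ref{prop:dW_finer_weakstar}, and the converse uses the common compact support to get tightness, hence narrow convergence via Proposition \ref{prop:dBL_weakstar}, hence $d_{W^*}$-convergence via Proposition \ref{prop:top_comp}. Your additional step upgrading the sequential equivalence to coincidence of the two metrizable topologies is a careful (and correct) formalization of what ``metrizes'' means, but it does not change the substance of the argument.
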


In summary, $C_0$-universality provides a sufficient condition to obtain actual distances between varifolds that can be expressed based on the kernel function. Furthermore, the resulting topology is locally equivalent to the weak-* topology as well as the topology induced by the bounded Lipschitz distance. This equivalence will be of importance in Section \ref{sec:approximation_discrete_var}.

\section{Deformation and registration of varifolds} 
\label{diff_matching}
Having defined a way of comparing general varifolds through the above kernel metrics $d_{W^*}$, our goal is now to focus on deformation models for those objects in order to formulate and study the diffeomorphic registration problem on $\mathcal{V}_d$. 

\subsection{Deformation models}
\label{ssec:deformation_models}
In this section, we discuss different models for how varifolds can be transported by a diffeomorphism of $\R^n$, in other words what are possible group actions of the diffeomorphism group $\textrm{Diff}(\R^n)$ on $\mathcal{V}_d$. 

Let us start by considering the case of an oriented rectifiable subset $(X,T_X)$. A diffeomorphism $\phi \in \textrm{Diff}(\mathbb{R}^n)$ transports $(X,T_X)$ as
\begin{align*}
    \phi \cdot (X,T_X) \doteq (\phi(X), T_{\phi(X)}),
\end{align*}
where the transported orientation map writes
\begin{align*}
    T_{\phi(X)}(y) \doteq d_{\phi^{-1}(y)} \phi \cdot T_X(\phi^{-1}(y))
\end{align*}
the above term being well-defined from \eqref{eq:act_lt_grass}. This suggests introducing the following \textit{pushforward action} on $\mathcal{V}_d$, which is defined for all $\mu \in \mathcal{V}_d$ and $\phi \in \textrm{Diff}(\mathbb{R}^n)$ by:
\begin{equation}
\label{eq:def_pushforward}
    (\phi_{\#} \mu | \omega) \doteq \int_{\mathbb{R}^d \times \widetilde{G}^n_d} \omega(\phi(x),d_{x} \phi \cdot T) J_T \phi(x) d \mu(x,T)
\end{equation}
in which $J_T \phi(x)$ denotes the determinant of the Jacobian of $\phi$ along $T$ (i.e. the change of d-volume induced by $\phi$ along $T$ at $x$) which is given by 
\begin{equation*}
  J_T \phi(x) = \det\left((d_x\phi(e_i) \cdot d_x\phi(e_j))_{i,j=1,\ldots,d} \right)   
\end{equation*}
for $(e_1,\ldots,e_d)$ an orthonormal basis of $T$. One easily verifies that $(\phi,\mu) \mapsto \phi_{\#} \mu$ defines a group action which commutes with the action on oriented rectifiable sets, namely
\begin{prop}
\label{prop:pushforward_rectifiable}
 For any oriented rectifiable set $(X,T_X)$ and diffeomorphism $\phi \in \textrm{Diff}(\mathbb{R}^n)$,\ $\phi_{\#} \mu_X = \mu_{\phi(X)}$.
\end{prop}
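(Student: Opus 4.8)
The plan is to verify the identity $\phi_{\#}\mu_X = \mu_{\phi(X)}$ directly at the level of distributions, i.e. to show that $(\phi_{\#}\mu_X | \omega) = (\mu_{\phi(X)} | \omega)$ for every test function $\omega \in C_0(\mathbb{R}^n \times \widetilde{G}^n_d)$; since two nonnegative finite Radon measures that agree against all such $\omega$ necessarily coincide, this suffices. Substituting $\mu = \mu_X$ into the definition \eqref{eq:def_pushforward} of the pushforward and using the representation \eqref{eq:vfasdistri} of $\mu_X$, the left-hand side takes the form
\begin{equation*}
(\phi_{\#}\mu_X | \omega) = \int_X \omega\bigl(\phi(x), d_x\phi \cdot T_X(x)\bigr) J_{T_X(x)}\phi(x) \, d\mathcal{H}^d(x),
\end{equation*}
while, applying \eqref{eq:vfasdistri} to the rectifiable set $(\phi(X), T_{\phi(X)})$, the right-hand side is $\int_{\phi(X)} \omega(y, T_{\phi(X)}(y)) \, d\mathcal{H}^d(y)$.

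The central tool is the area (change of variables) formula for Lipschitz maps over countably $\mathcal{H}^d$-rectifiable sets (see \cite{Simon,Federer}): for any nonnegative $\mathcal{H}^d$-measurable integrand $g$ on $\phi(X)$,
\begin{equation*}
\int_{\phi(X)} g(y) \, d\mathcal{H}^d(y) = \int_X g(\phi(x)) \, J_{T_X(x)}\phi(x) \, d\mathcal{H}^d(x),
\end{equation*}
where $J_{T_X(x)}\phi(x)$ is precisely the tangential Jacobian appearing in \eqref{eq:def_pushforward}. I would apply this with $g(y) = \omega(y, T_{\phi(X)}(y))$, so the right-hand side matches the expression for $(\mu_{\phi(X)}|\omega)$. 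It then remains to identify the transported tangent field: evaluating at $y = \phi(x)$ in the definition $T_{\phi(X)}(y) = d_{\phi^{-1}(y)}\phi \cdot T_X(\phi^{-1}(y))$ gives $T_{\phi(X)}(\phi(x)) = d_x\phi \cdot T_X(x)$, so the integrand becomes $\omega(\phi(x), d_x\phi \cdot T_X(x)) J_{T_X(x)}\phi(x)$, which is exactly the integrand obtained for $(\phi_{\#}\mu_X|\omega)$. Equality of the two distributions, hence of the two varifolds, follows.

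Two points require care, and I expect the tangent-space identification to be the main subtlety rather than a routine manipulation. First, one must justify that $\phi(X)$ is again countably $\mathcal{H}^d$-rectifiable (immediate, since $\phi$ is a diffeomorphism and precomposition preserves the Lipschitz parametrizations defining rectifiability) and that its approximate tangent space at $\phi(x)$ equals $d_x\phi \cdot T_X(x)$ for $\mathcal{H}^d$-a.e.\ $x$; this is the standard transformation rule for approximate tangent spaces under a $C^1$ diffeomorphism, with the orientation preserved precisely because the action \eqref{eq:act_lt_grass} is defined through the normalized image of a consistently oriented basis. Second, one should confirm that the tangential Jacobian $J_{T_X(x)}\phi(x)$ used in \eqref{eq:def_pushforward} coincides with the Jacobian factor produced by the area formula; this holds by construction, as both are computed from the Gram matrix of $d_x\phi$ restricted to $T_X(x)$ expressed in an orthonormal basis of $T_X(x)$. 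Once these two facts are established, the proof reduces to the single change of variables displayed above.
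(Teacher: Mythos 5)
Your proof is correct and follows exactly the route the paper takes: the paper disposes of this proposition by citing the area formula for integrals over rectifiable sets (Simon, Chapter 2), and your argument is a careful unpacking of that one-line proof, including the tangent-space identification $T_{\phi(X)}(\phi(x)) = d_x\phi \cdot T_X(x)$ and the matching of the tangential Jacobian with the area-formula factor. Nothing is missing; you have simply made explicit the details the paper leaves to the reference.
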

This follows from the area formula for integrals over rectifiable sets, c.f. \cite{Simon} Chapter 2. 
\begin{remark}
This pushforward action also extends the diffeomorphic transport of measures with densities on $\R^n$. Indeed if $\mu=\theta(x). \mathcal{L}^n$ with $\theta$ a measurable density function on $\R^n$ and $\mathcal{L}^n$ the Lebesgue measure, we can extend $\mu$ to a n-varifold in $\mathcal{V}_n$ by taking a constant global orientation in $\widetilde{G}_n^n = \{\pm 1\}$. Then, for any orientation-preserving diffeomorphism $\phi$, \eqref{eq:def_pushforward} writes in this case: $\phi_{\#} \mu = |J\phi(x)| \theta(x) . \mathcal{L}^n$ with $J\phi(x)$ is the full Jacobian determinant of $\phi$, leading to the usual action on densities $\phi \cdot \theta(x) = |J\phi(x)| \theta(x)$.  
\end{remark}

However, in contrast with past works on submanifold registration, this is not the only possible group action that could be considered on the space $\mathcal{V}_d$. For instance, one can define another action by removing the above volume change term, taking instead
\begin{align*}
    (\phi_{*} \mu | \omega) := \int_{\mathbb{R}^d \times \widetilde{G}^n_d} \omega(\phi(x),d_{x} \phi \cdot T) d \mu(x,T).
\end{align*}

This \textit{normalized action} has the property of preserving the total mass of the varifold, i.e.,
\begin{align*}
    | \phi_{*} \mu | (\mathbb{R}^n) = | \mu | (\mathbb{R}^n), \ \forall \mu \in \mathcal{V}_d \textrm{ and } \phi \in \textrm{Diff}(\mathbb{R}^n).
\end{align*}
Although this action is not consistent with the action on rectifiable sets as in Proposition \ref{prop:pushforward_rectifiable}, this model may be more adequate in applications to certain types of data in which mass preservation is natural. 

We refer the interested reader to \cite{hsieh2019diffeomorphic} for a more in depth discussion on the properties (orbits, isotropy subgroups...) of these group actions in the simpler case of 1-varifolds. In the rest of the paper, we will restrict ourselves to the pushforward action model of \eqref{eq:def_pushforward}, although we expect the following derivations to adapt to other cases as well, which precise study is for now left as future work.

\subsection{The diffeomorphic registration problem}
\label{ssec:diffeom_reg}

With the group action defined above, we are now ready to introduce the mathematical formulation of the diffeomorphic registration problem for general varifolds in $\mathcal{V}_d$. As deformation model, we will rely on the Large Deformation Diffeomorphic Metric Mapping (LDDMM) setting mentioned in the introduction. 

Let us briefly sum up the basic construction of LDDMM, which details can be found in \cite{Beg2005,younes2019shapes}. In this framework, deformations consist of diffeomorphisms generated by flowing time-dependent vector fields. Let $V$ be a fixed RKHS of vector fields on $\R^n$ and $L^2([0,1],V)$ be space of time dependent velocity fields $v$ such that for all $t\in [0,1]$, $v_t$ belongs to $V$. The flow map $t \mapsto \varphi_t^v$ is defined for all $t\in[0,1]$ by $\varphi_0^v = \text{id}$ and the ODE $\dot{\varphi}^v_t = v_t \circ \varphi^v_t$. If $V$ is continuously embedded in $C_0^1(\R^n,\R^n)$, one can show that for all $t$, $\varphi_t^v$ is a $C^1$-diffeomorphism of $\R^n$. Moreover, on the subgroup $\text{Diff}_V =\{\varphi_1^v \ | \ v \in L^2([0,1],V)\}$ of $\text{Diff}(\R^n)$, one can define the following right-invariant Riemannian metric:
\begin{equation*}
    d_{G_V}(\text{id},\phi) = \inf \left\{ \int_0^1 \|v_t\|_V^2 dt \ | \ \varphi_1^v = \phi \right\}
\end{equation*}

Let us now consider a source (or template) varifold $\mu_0 \in \mathcal{V}_d$ as well as a target $\mu_{tar} \in \mathcal{V}_d$. With the above deformation model and metric, registering $\mu_0$ to $\mu_{tar}$ consists in finding a deformation $\phi$ that minimizes $d_{G_V}(\text{id},\phi)$ with the constraint that $\phi_{\#} \mu_0$ is close to $\mu_1$ in the sense of a kernel metric $\|\cdot\|_{W^*}$ defined in Section \ref{ssec:kernel_metrics}. This can be reformulated as the following optimal control problem:
\begin{equation}
\label{eq:matching_var}
\argmin_{v \in L^2([0,1],V)}  \bigg\{E(v) = \frac{1}{2}\int_{0}^{1} \|v_t\|_V^2 dt + \lambda \|\mu(1) - \mu_{tar} \|_{W^*}^2 \bigg\}
\end{equation}
with $v$ being the control, $E$ the total cost and the state equation is given by $\mu(t) \doteq (\phi_t^v)_{\#} \mu_0$ for the pushforward model. The first term in \eqref{eq:matching_var} is the regularization term that constrains the regularity of the estimated deformation paths. The second term measures the similarity between the deformed varifold $\mu(1)$ and the target varifold $\mu_{tar}$. $\lambda$ is a weight parameter between the regularization and fidelity terms. Note that this is consistent with the generic inexact registration problem formulation in LDDMM that was proposed for objects like images, landmarks, submanifolds...

The well-posedness of the optimal control problem \eqref{eq:matching_var} holds under the following assumptions:

\begin{theorem}\label{thm:exist_opt_control}
If $V$ is continuously embedded in $C_0^2(\mathbb{R}^n,\mathbb{R}^n)$, $W$ is continuously embedded in $C_0^1(\mathbb{R}^n \times \widetilde{G}^n_d)$  and $\rm{supp}(\mu_0) \subset K$, for some compact subset $K$ of $\mathbb{R}^n \times \widetilde{G}^n_d$, then there exists a global minimizer to the problem \eqref{eq:matching_var}.
\end{theorem}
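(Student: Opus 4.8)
The plan is to apply the direct method of the calculus of variations. First I would take a minimizing sequence $(v^k)_k$ in $L^2([0,1],V)$ for the energy $E$ defined in \eqref{eq:matching_var}. Since $E(v^k)$ is bounded, the regularization term $\frac{1}{2}\int_0^1 \|v^k_t\|_V^2\, dt$ is bounded uniformly in $k$, so $(v^k)_k$ is a bounded sequence in the Hilbert space $L^2([0,1],V)$. By weak compactness of bounded sets in a Hilbert space, I can extract a subsequence (still denoted $v^k$) converging weakly to some $v^* \in L^2([0,1],V)$. The goal is then to show that $v^*$ is a minimizer, which reduces to proving that the regularization term is weakly lower semicontinuous and that the fidelity term is continuous along this sequence.

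The lower semicontinuity of $v \mapsto \int_0^1 \|v_t\|_V^2\,dt$ is immediate, since the squared norm of a Hilbert space is weakly lower semicontinuous. The crux of the argument lies in the behavior of the flows. The key technical ingredient, which I would import from the standard LDDMM well-posedness theory (see e.g. \shortcite{Younes} or the Dupuis--Grenander--Miller argument), is that weak convergence $v^k \rightharpoonup v^*$ in $L^2([0,1],V)$ together with the embedding $V \hookrightarrow C_0^2(\R^n,\R^n)$ forces the flows $\varphi_1^{v^k}$ to converge to $\varphi_1^{v^*}$, uniformly on every compact set and together with their first-order spatial derivatives. This rests on writing the flow as the fixed point of $\varphi_t(x) = x + \int_0^t v_s(\varphi_s(x))\,ds$, differentiating in $x$, and applying Gronwall-type estimates; the $C^2$-regularity of $V$ is precisely what is needed to control $d\varphi$ and obtain its convergence.

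Next I would transfer this flow convergence to the pushforward varifolds. From the energy bound and $V \hookrightarrow C_0^0$, the total displacement $\int_0^1 \|v^k_t\|_\infty\,dt$ is uniformly bounded, so, using $\mathrm{supp}(\mu_0)\subset K$, all the deformed supports $\varphi_1^{v^k}(\mathrm{supp}(\mu_0))$ remain inside a single compact set independent of $k$. The $C^1$-convergence of the flows then yields, via the explicit formula \eqref{eq:def_pushforward} for the pushforward (in which both $d_x\phi \cdot T$ and the Jacobian factor $J_T\phi$ depend continuously on $\phi$ and $d\phi$, and are uniformly bounded on $K$), that $\mu(1)^k = (\varphi_1^{v^k})_{\#}\mu_0$ converges to $\mu(1)^* = (\varphi_1^{v^*})_{\#}\mu_0$ in the narrow topology by dominated convergence, with total masses and supports uniformly controlled. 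Invoking Proposition \ref{prop:top_comp}, narrow convergence implies convergence in $W^*$, hence $\|\mu(1)^k - \mu_{tar}\|_{W^*}^2 \to \|\mu(1)^* - \mu_{tar}\|_{W^*}^2$, giving continuity of the fidelity term.

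Combining the two facts, $E(v^*) \le \liminf_k E(v^k) = \inf E$, so $v^*$ realizes the infimum. The main obstacle I anticipate is the flow-convergence step: making precise that weak (rather than strong) convergence of the controls already yields strong $C^1$ convergence of the flows on compact sets, and then carefully propagating this through the Jacobian term $J_T\varphi$ to establish narrow convergence of the pushforwards. Everything else (boundedness, weak extraction, lower semicontinuity, and the confinement of supports) is routine once this continuity of the state map $v \mapsto \mu(1)$ has been established.
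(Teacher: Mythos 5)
Your proposal is correct, and its skeleton coincides with the paper's proof: the direct method with weak compactness of a minimizing sequence in $L^2([0,1],V)$, weak lower semicontinuity of the kinetic term, and the key imported fact (the paper cites \shortcite{Younes}, Chapter 8.2) that weak convergence of the controls, under $V \hookrightarrow C_0^2(\R^n,\R^n)$, forces convergence of the flows $\varphi_1^{v_j} \to \varphi_1^{\bar v}$ in the $C^1$ norm on compact sets. Where you genuinely diverge is in how this flow convergence is transferred to the fidelity term. The paper argues quantitatively: for $\omega$ in the unit ball of $W$ it splits $\left( (\varphi_1^{v_j})_{\#}\mu_0 - (\varphi_1^{\bar v})_{\#}\mu_0 \,|\, \omega \right)$ into a term controlled by the Lipschitz modulus of $\omega$ in both arguments (this is where the hypothesis $W \hookrightarrow C_0^1(\R^n \times \widetilde{G}^n_d)$ is used) and a term controlled by the difference of the Jacobians, then takes the supremum over $\omega$ to obtain the explicit bound $\|(\varphi_1^{v_j})_{\#}\mu_0 - (\varphi_1^{\bar v})_{\#}\mu_0\|_{W^*} \leq C' \|(\varphi_1^{v_j}-\varphi_1^{\bar v})|_K\|_{1,\infty}$. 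You instead argue qualitatively: dominated convergence on the compact support $K$ gives narrow convergence of the pushforwards, and Proposition \ref{prop:top_comp} upgrades narrow convergence to $W^*$-convergence. Both routes are sound. Yours is slightly more economical in hypotheses at that step, since Proposition \ref{prop:top_comp} only requires the $C_0$ embedding of Proposition \ref{prop:ctru_kernel} rather than the $C_0^1$ embedding (the latter remains in the theorem's statement but is not strictly needed by your argument); the paper's estimate, on the other hand, is stronger because it is quantitative, and that Lipschitz-type control of the $W^*$ distance by $\|(\varphi_1^{v_j}-\varphi_1^{\bar v})|_K\|_{1,\infty}$ is reused almost verbatim in the $\Gamma$-convergence proof of Theorem \ref{thm:convergence_sol}, where a mere qualitative continuity statement would not plug in as directly.
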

The proof is similar to previous results of the same type on rectifiable currents and varifolds. We give it in Appendix for the sake of completeness.  
\begin{remark}
One can derive necessary and sufficient conditions on the kernels of $W$ and $V$ for the two embedding assumptions of Theorem \ref{thm:exist_opt_control} to hold (see for instance Theorem 2.11 in \cite{Micheli2014}). In our context, in order to get $W \hookrightarrow C_0^1(\mathbb{R}^n \times \widetilde{G}^n_d)$ for instance, it is enough to assume that $\rho$ and $\gamma$ are $C^2$ functions such that all derivatives of $\rho$ up to order 2 vanish as $x \rightarrow +\infty$. 
\end{remark}

As an important note, the formulation of \eqref{eq:matching_var} extends registration of submanifolds or rectifiable subsets in the sense that if $\mu_0 = \mu_{X_0}$ and $\mu_{tar}=\mu_{X_{tar}}$ for two oriented d-rectifiable subsets of $\R^n$ then \eqref{eq:matching_var} becomes equivalent, thanks to Proposition \ref{prop:pushforward_rectifiable}, to registering rectifiable subsets, i.e. to the problem 
\begin{equation*}
  \argmin_{v \in L^2([0,1],V)}  \bigg\{\frac{1}{2}\int_{0}^{1} \|v_t\|_V^2 dt + \lambda \|\mu_{X(1)} - \mu_{X_{tar}} \|_{W^*}^2 \bigg\} 
\end{equation*}
with $X(t) = \varphi_t^v \cdot X_0$, which is the setting of many past works as for instance \cite{Glaunes2008,Charon2,Charon2017}.

\subsection{General optimality conditions}
\label{ssec:general_PMP}
A last important question we address in this section is the derivation of necessary optimality conditions for the solutions of \eqref{eq:matching_var}. In standard finite-dimensional optimal control problems, these are provided by the Pontryagin Maximum Principle (PMP) introduced originally in \cite{Pontryagin1962}. The approach generalizes, with a certain number of technicalities, to a broad class of infinite-dimensional shape matching problems, as developed in \cite{arguillere14:_shape}. 

We follow the same setting as well as related works such as \cite{Sommer2013} by first rewriting the above problem as an optimal control problem on diffeomorphisms, i.e.
\begin{equation*}
    \argmin_{v \in L^2([0,1],V)}  \bigg\{\frac{1}{2}\int_{0}^{1} \|v_t\|_V^2 dt + g(\varphi_1^v) \ | \ \text{s.t.} \ \dot{\varphi}_t^v = v_t \circ \varphi_t^v \bigg\} 
\end{equation*}
with $g(\varphi_1^v) \doteq \lambda \|(\varphi_1^v)_{\#} \mu_0 - \mu_{tar} \|_{W^*}^2$. The state variables are now given by the deformations $\varphi_t^v$ which we view as elements of the Banach space $\mathcal{B} \doteq \text{id} + C_0^1(\R^n,\R^n)$. Let us denote, for $\phi \in \text{Diff}(\R^n)$, $\xi_{\phi} : V \rightarrow C_0^1(\R^n,\R^n)$ the mapping $v \mapsto v \circ \phi$. We then introduce the Hamiltonian functional $H: \ C_0^1(\R^n,\R^n)^* \times \mathcal{B} \times V \rightarrow \R$ defined by:
\begin{equation}
    \label{eq:Hamiltonian_cont}
    H(p,\phi,v) = (p|v\circ \phi) - \frac{1}{2}\|v\|_V^2
\end{equation}
where $p$ is the costate variable which is a vector distribution of $C_0^1(\R^n,\R^n)^*$ and $(p|v\circ \phi)$ denotes the duality bracket in $C_0^1(\R^n,\R^n)^*$. With the assumptions of Theorem \ref{thm:exist_opt_control}, it follows from the maximum principle shown in \cite{arguillere14:_shape} that if $(v_t,\varphi_t^v)$ is a global minimum of the optimal control problem, there exists a path of costates $p \in H^1([0,1],C_0^1(\R^n,\R^n)^*)$ such that the following equations hold:
\begin{equation}
\left\{\begin{array}{ll}
&\dot{\varphi}_t^v = \partial_p H(p_t,\varphi_t^v,v_t) \\
&\dot{p}_t = -\partial_{\phi} H(p_t,\varphi_t^v,v_t) \\
&\partial_v H(p_t,\phi_t^v,v_t) = 0
\end{array}\right.
\label{eq:Hamiltonian_eqn_cont}
\end{equation}
with the end time boundary conditions $p_1 = -\partial_{\phi} g(\varphi_1^v)$. From the last equation in \eqref{eq:Hamiltonian_eqn_cont}, we can attempt to deduce the form of the optimal $v$. Introducing the Riesz isometry operator $\bold{K}_V : V^*\rightarrow V$ and its inverse $\bold{L}_V=\mathbf{K}_V^{-1}: V \rightarrow V^*$, we get:
\begin{equation}
\label{eq:opt_v_cont}
  \xi_{\varphi_t^v}^* p_t - \bold{L}_V v_t = 0 \ \Rightarrow \ v_t = \bold{K}_V \xi^*_{\varphi_t^v} p_t. 
\end{equation}

One additional consequence of \eqref{eq:Hamiltonian_eqn_cont} is the following conservation of momentum again proved in \cite{arguillere14:_shape}: for all $u \in C_0^1(\R^n,\R^n)$ and $t\in[0,1]$, 
\begin{equation}
    \label{eq:conservation_momentum}
    (p_t | d\varphi_t^v u) = (p_0 | u).
\end{equation}
Note that \eqref{eq:Hamiltonian_eqn_cont}, \eqref{eq:opt_v_cont} and \eqref{eq:conservation_momentum} are generic to the LDDMM model and so far independent of the nature of the deformed objects and of the term $g(\varphi_1^v)$ in the cost. This dependency is entirely encompassed by the boundary condition $p_1 = -\partial_{\phi} g(\varphi_1^v)$ which we may describe a little more precisely based on the following:

\begin{prop}
\label{prop:variation_g}
 The end-time momentum $p_1$ is a vector distribution in $C_0^1(\R^n,\R^n)^*$ of the form
 \begin{align*}
     (p_1|u) = &\int_{\R^n} \alpha(x) \cdot u(x) \ d|\mu_0|(x) \\ 
     +&\int_{\R^n \times \widetilde{G}^n_d} \beta(x,T)  du|_T(x) \ d\mu_0(x,T) \\
     +&\int_{\R^n \times \widetilde{G}^n_d} \gamma(x,T) \text{div}_T u(x) \ d\mu_0(x,T)
 \end{align*}
 where $\alpha: \R^n \rightarrow \R^n$, $\beta: \R^n \times \widetilde{G}^n_d \rightarrow (\R^{n\times d})^*$ and $\gamma: \R^n \times \widetilde{G}^n_d \rightarrow \R$ are continuous fields and for all $T \in \widetilde{G}^n_d$, $\text{div}_T u$ and $du|_{T}$ denote the divergence and differential of $u$ restricted to $T$. 
\end{prop}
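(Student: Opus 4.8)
The plan is to compute the Gâteaux differential of $g(\phi) = \lambda\|\phi_{\#}\mu_0 - \mu_{tar}\|_{W^*}^2$ at $\phi = \varphi_1^v$ and to read off the three integral terms. Since $g$ is the composition of the smooth map $\phi \mapsto \phi_{\#}\mu_0 \in W^*$ with the squared Hilbert norm on $W^*$, the chain rule gives
$$(\partial_\phi g(\phi)\,|\,u) = 2\lambda\,\langle \phi_{\#}\mu_0 - \mu_{tar},\ \partial_\phi(\phi_{\#}\mu_0)\cdot u\rangle_{W^*}$$
for $u \in C_0^1(\R^n,\R^n)$. I would first introduce the Riesz representative $h \doteq \mathbf{K}_W(\phi_{\#}\mu_0 - \mu_{tar}) \in W$ of the residual, so that the inner product above turns into the duality pairing $(\partial_\phi(\phi_{\#}\mu_0)\cdot u\,|\,h)$. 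The crucial gain is that, under the standing assumption $W \hookrightarrow C_0^1(\R^n\times\widetilde{G}^n_d)$ of Theorem \ref{thm:exist_opt_control}, $h$ is a \emph{fixed} $C^1$ test function vanishing at infinity; the whole problem thus reduces to differentiating the explicit expression \eqref{eq:def_pushforward} of the pushforward evaluated against $h$.

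Next I would differentiate
$$(\phi_{\#}\mu_0\,|\,h) = \int_{\R^n\times\widetilde{G}^n_d} h(\phi(x),\, d_x\phi\cdot T)\, J_T\phi(x)\, d\mu_0(x,T)$$
under the integral sign, which is justified by the compact support of $\mu_0$ together with the $C^1$ regularity of $h$ and $\phi$. Applying the product rule to the three $\phi$-dependent factors produces exactly three contributions: (i) from the position slot $\phi(x)$, a term $\nabla_1 h \cdot u(x)$ with $u$ evaluated at the source point; (ii) from the Grassmannian slot $d_x\phi\cdot T$, whose variation in the direction $u$ is obtained by differentiating the normalized action \eqref{eq:act_lt_grass} with respect to the linear map $d_x\phi$ along $d_x u$, a term depending linearly on $du|_T$; (iii) from the tangential Jacobian $J_T\phi$, whose first variation is the tangential divergence, yielding a factor $\text{div}_T u$.

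Collecting the three contributions then matches the claimed form. The Grassmannian and Jacobian terms are already of the stated shape $\int \beta(x,T)\,du|_T\,d\mu_0$ and $\int\gamma(x,T)\,\text{div}_T u\,d\mu_0$; the position term arises naturally as $\int \tilde\alpha(x,T)\cdot u(x)\,d\mu_0(x,T)$ with a continuous integrand, which I would rewrite against the weight measure as $\int_{\R^n}\alpha(x)\cdot u(x)\,d|\mu_0|(x)$ by disintegrating $\mu_0 = \int\nu_x\,d|\mu_0|$ along the Grassmannian fibre. The fields are built out of $h$, its first-order derivatives, $d\phi$, and the Plücker and Jacobian geometry evaluated at $\phi = \varphi_1^v$, their regularity following from $h \in C^1$ together with the smoothness of the action $A\cdot T$ and of $J_T\phi$ in their arguments. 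Boundedness of the resulting linear functional in $u$ for $\|\cdot\|_{C_0^1}$, which confirms $p_1 \in C_0^1(\R^n,\R^n)^*$, is then immediate since $h$, $\nabla_1 h$ and $\partial_2 h$ are bounded and $\mu_0$ is a finite compactly supported measure.

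I expect the main obstacle to be steps (ii)--(iii): obtaining the explicit first variations of the normalized Plücker image $d_x\phi\cdot T$ and of the volume factor $J_T\phi$ with respect to $d_x\phi$ in the perturbation direction $d_x u$, carefully handling the normalization in \eqref{eq:act_lt_grass} and correctly identifying the tangential divergence $\text{div}_T u$. The remaining analytic points --- differentiation under the integral sign and the Hilbertian chain rule through $\mathbf{K}_W$ --- are routine once the $C_0^1$ embedding of $W$ and the compact support of $\mu_0$ are invoked.
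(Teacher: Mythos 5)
Your proposal is correct and follows essentially the same route as the paper's proof: both reduce the computation to differentiating $(\phi_{\#}\mu_0\,|\,\omega)$ against the fixed RKHS element $\omega = 2\lambda \mathbf{K}_W(\phi_{\#}\mu_0-\mu_{tar})$ (your $h$, up to the factor $2\lambda$), split the derivative into the position, Grassmannian, and tangential-Jacobian contributions, and use the Young measure disintegration of $\mu_0$ to convert the position term into an integral against $|\mu_0|$. Even the point you flag as the main obstacle --- the explicit first variation of the normalized Grassmannian action --- is treated at the same level of abstraction in the paper, which also leaves that term as a generic continuous field $\tilde{B}(\phi,x,T)$ acting on $d_xu|_T$ rather than computing it in closed form.
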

A condensed proof of this proposition can be found in the Appendix, although we have left aside the technical derivations related to differential calculus on the Grassmannian (this will be discussed further in Section \ref{sec:numerics} in the discrete setting). This result extends in a way first variation formulas for varifolds proved in \cite{Charon2,Charlier2017} which considered variations of rectifiable varifolds resulting from variations of the underlying rectifiable sets. This corresponds to the special case in which $\mu_0 = \mu_{X_0}$. In that case, one can show, after some derivations, that the above expression of $p_1$ can be rewritten in the form of a vector distribution $u \mapsto \int_{\varphi_1^v(X_0)} u(x) \cdot h(x) d \mathcal{H}^d$ in $C_0^0(\R^n,\R^n)^*$ with vectors $h(x)$ normal to $\varphi_1^v(X_0)$ at each $x$. In our more general situation, this is however not possible and $p_1$ is a priori a distribution that involves first order derivatives of the test function $u$.

Now, the conservation law of \eqref{eq:conservation_momentum} gives that for all $t \in [0,1]$, 
\begin{equation*}
  (p_t | d\varphi_t^v u) = (p_1 | d\varphi_1^v u) = (p_0 | u).  
\end{equation*}
Using the expression of $p_1$ in Proposition \ref{prop:variation_g}, and grouping all $0$-th and $1$-st order terms in the resulting expressions, we may write $p_t$ in the general form:
\begin{align*}
   (p_t | u) &= \int_{\R^n \times \widetilde{G}_d^n} \alpha_t(x,T) \cdot u(x) \ d\mu_0(x,T) + \int_{\R^n \times \widetilde{G}_d^n} B_t(x,T) d_x u|_{T} \ d\mu_0(x,T)
\end{align*}
where $\alpha_t: \R^n \times \widetilde{G}_d^n \rightarrow \R^n$ and $B_t: \R^n \times \widetilde{G}_d^n \rightarrow (\R^{n\times d})^*$ are continuous fields, with $\alpha_1(x,T)=\alpha(x)$ and $B_1(x,T) du|_{T}(x) = \beta(x,T) du|_T(x) + \gamma(x,T) \text{div}_T u(x)$. Furthermore, optimal vector fields satisfy $v_t = K_V \xi^*_{\varphi_t^v} p_t$ and we have
\begin{align*}
   (\xi^*_{\varphi_t^v} p_t | u) &= (p_t | u\circ \varphi_t^v) \\
   &= \int_{\R^n \times \widetilde{G}_d^n} \alpha_t(x,T) \cdot u(\varphi_t^v(x)) \ d\mu_0(x,T) + \int_{\R^n \times \widetilde{G}_d^n} B_t(x,T) d_{\varphi_t^v(x)}u|_{d_x\varphi_t^v \cdot T} \ d\mu_0(x,T).
\end{align*}
Denoting $K_V: \R^n \times \R^n \rightarrow \R^{n\times n} $ the reproducing kernel of $V$, the reproducing kernel property implies that for all $u\in V$ and $x,h \in \R^n$, $u(x) \cdot h = \langle K_V(x,\cdot) h, u \rangle_V$. Moreover, the similar property on the kernel first order derivatives \cite{Micheli2014} gives that for any $h,h' \in \R^n$, 
$$d_x u (h) \cdot h' = \langle \partial_1 K_V(x,\cdot)(h) \cdot h', u \rangle_V.$$
Then, we rewrite the linear maps $B_t$ as $B_t(x,T) H = \sum_{i=1}^d b_{t,i}(x,T) \cdot H_i$ for any $H=(H_1,\ldots,H_d) \in \R^{n\times d}$ and where $b_i(x,T) \in \R^n$ are the component vector fields of $B_t$. By the above and the linearity of $\bold{K}_V$, we obtain the following general expression for optimal vector fields   
\begin{align}
   v_t &= \int_{\R^n \times \widetilde{G}_d^n} K_V(\varphi_t^v(x),\cdot) \alpha_t(x,T) \ d\mu_0(x,T) & \nonumber\\
   &+ \int_{\R^n \times \widetilde{G}_d^n} \left(\sum_{i=1}^d \partial_1 K_V(\varphi_t^v(x),\cdot)(d_x\varphi_t^v(t_i))\cdot b_{t,i}(x,T)\right) d\mu_0(x,T). 
\end{align}
In contrast with LDDMM registration of submanifolds or point clouds, the expression of optimal deformation fields involves in general both the kernel function and its first order derivatives. We do not explicit the vector fields $\alpha$ and $b_i$ at this point, it will be specified later in the discrete setting, see Section \ref{ssec:discrete_registration}.

\section{Approximations by discrete varifolds}
\label{sec:approximation_discrete_var}

The previous derivations were so far conducted for completely general measures in the space $\mathcal{V}_d$ which include objects of widely different natures. In the perspective of implementing numerically the above approach, which is the subject of Section \ref{sec:numerics}, we first need to build an adequate discretization framework in $\mathcal{V}_d$ with approximation guarantees, and even more importantly investigate the consistency of the discretized registration problems (Theorem \ref{thm:convergence_sol}), which is the main result of this section. 

\subsection{Discrete approximations}
\label{ssec:discrete_approx}
In what follows, we will consider the specific class of varifolds which can be written as finite combinations of Dirac masses:
\begin{align}\label{eq:discrete_varifolds}
    \mu = \sum_{i=1}^N r_i \delta_{(x_i,T_i)},\ r_i \in \mathbb{R}_+,\ x_i \in \mathbb{R}^n, \ T_i \in \tilde{G}^n_d.
\end{align}
for some $N\geq 1$. Throughout this paper, varifolds of this form will be called \textit{discrete varifolds}. It is quite natural to consider this type of varifolds for the purpose of representing discrete shapes, which has been exploited in previous works on piecewise linear curves and surfaces. For example, if $X = \bigcup_{i=1}^N X_i$ is a triangulated surface, with $X_i$ being the mesh triangles with specified orientations, one can write $\mu_X = \sum_{i=1}^N \mu_{X_i}$ and for each $i \in \{1,\cdots,N\}$ approximate $\mu_{X_i}$ by $r_i \delta_{(x_i,T_i)}$, where $x_i$ is the center of $X_i$, $T_i$ the oriented plane containing $X_i$ and $r_i = \mathcal{H}^d(X_i)$. This leads to the approximation $\widetilde{\mu}_{X} := \sum_{i=1}^N r_i \delta_{(x_i,d_i)}$. As proved in \cite{Charon2017}, this approximation provides an acceptable error bound for $d_{W^*}$:
\begin{align*}
    d_{W^*}(\mu_X,\widetilde{\mu}_{X}) \leq Cte \ \mathcal{H}^d(X) \max_{i} \textrm{diam}(X_i).
\end{align*}
The main interest of such discrete varifold approximations is that the expression of the metric \eqref{eq:rkp2} becomes particularly simple to compute numerically. Indeed, given two discrete varifolds $\mu = \sum_{i=1}^{N} r_i \delta_{(x_i,S_i)} $ and $\mu' = \sum_{j=1}^{M} r'_j \delta_{(x'_j,T'_j)}$, we have as a particular case of \eqref{eq:rkp2}:
\begin{align}\label{eq:norm_discrete_varifolds}
       \langle \mu,\mu' \rangle_{W^*} =  \sum_{i=1}^N\sum_{j=1}^M  r_i r'_j \rho(|x_i-x'_j|^2) \gamma(\langle T_i,T'_j \rangle). 
\end{align}

The above approximation scheme only applies to the case of piecewise linear shapes given by meshes such as polygonal curves or triangulated surfaces. In the more general context of this work, a key issue is to construct similar discrete varifold approximations for more general and less structured objects. Specifically, given a varifold $\mu$ with finite total weight, can it be approximated by discrete varifolds and will approximations converge as $N\rightarrow +\infty$? This is the problem known as \textit{quantization}, which has been studied intensively in the case of probability measures over Euclidean spaces \cite{graf2007foundations} or manifolds \cite{kloeckner2012approximation}, under specific regularity assumptions on those measures. In the situation of varifolds, an interesting recent work on this question is \cite{buet2018discretization}. The authors prove that any rectifiable varifold with finite mass can be approximated by a sequence of discrete varifolds for the bounded Lipschitz distance and propose a numerical approach to approximate mean curvature measures based on discrete varifolds. 

In this section, we first wish to extend approximation results to general oriented varifolds of finite mass for both $d_{BL}$ and $d_{W^*}$ metrics.
\begin{theorem} 
\label{thm:varifold_approximation}
Let
 \begin{align*}
     \mathcal{V}^N_d := \left\{ \sum_{i=1}^N r_i \delta_{(x_i,T_i)} | r_i \in \mathbb{R}_+,\ x_i \in \mathbb{R}^n, \ T_i \in \widetilde{G}^n_d \right\}
 \end{align*}
 be the (non-convex) cone of discrete varifolds with at most $N$ Diracs. For any oriented varifold $\mu \in \mathcal{V}_d$ with $|\mu |(\mathbb{R}^n) < \infty$, there exists a sequence $\mu_N \in \mathcal{V}_d^N$such that $\lim_{N \rightarrow \infty} d_{BL}(\mu_N,\mu) =0$. Moreover, if $\mu$ has compact support, then we can assume that for all $N$, $\rm{supp}(\mu_N) \subset K$ for some compact set $K \subset \mathbb{R}^n \times \widetilde{G}^n_d$ and 
 \begin{equation*}
     d_{BL}(\mu_N,\mu) < \frac{C}{N^{1/(n+d(n-d))}},
 \end{equation*}
 where $C$ is a constant that only depends on $n$, $d$ and $\rm{supp}(\mu)$.
\end{theorem}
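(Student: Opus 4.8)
The plan is to establish the quantitative bound for compactly supported $\mu$ by a direct lumping (quantization) argument, and then reduce the general finite-mass case to it using tightness. Throughout, I would metrize the product $\R^n \times \widetilde{G}^n_d$ by $d\big((x,T),(x',T')\big) = |x-x'| + |i_P(T)-i_P(T')|$, with $i_P$ the Pl\"ucker embedding of Remark \ref{rem:Grassmannian}, so that the test functions $\omega$ entering the definition of $d_{BL}$ are exactly those with $\|\omega\|_\infty \leq 1$ and $\textrm{Lip}(\omega)\leq 1$ for this metric (any two reasonable product metrics are bi-Lipschitz equivalent since $\widetilde{G}^n_d$ is compact, so this only affects constants). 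The decisive geometric input is that $\R^n \times \widetilde{G}^n_d$ is a smooth manifold of dimension $m := n + d(n-d)$, since $\widetilde{G}^n_d$ is a compact $d(n-d)$-dimensional manifold; consequently any fixed compact subset $K$ has covering number at scale $\delta$ bounded by $C_K\,\delta^{-m}$.

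First I would treat the case $\textrm{supp}(\mu) \subset K$ with $K$ compact. Fixing $\delta>0$, I would choose a $\delta$-net $\{(x_i,T_i)\}_{i=1}^{N} \subset K$ with $N \leq C_K \delta^{-m}$ and form the associated Borel (Voronoi) partition $K = \bigsqcup_{i=1}^N C_i$, each $C_i$ contained in the ball of radius $\delta$ about its center $(x_i,T_i)$. Setting
\begin{equation*}
    \mu_N := \sum_{i=1}^N \mu(C_i)\, \delta_{(x_i,T_i)} \in \mathcal{V}_d^N,
\end{equation*}
any admissible $\omega$ satisfies
\begin{equation*}
    |(\mu-\mu_N|\omega)| = \Big| \sum_{i=1}^N \int_{C_i} \big(\omega(x,T)-\omega(x_i,T_i)\big)\,d\mu(x,T)\Big| \leq \delta\,|\mu|(\R^n),
\end{equation*}
since $d\big((x,T),(x_i,T_i)\big) \leq \delta$ on $C_i$ and $\textrm{Lip}(\omega)\leq 1$. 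Taking the supremum over $\omega$ gives $d_{BL}(\mu,\mu_N) \leq \delta\,|\mu|(\R^n)$, and the choice $\delta = (C_K/N)^{1/m}$ yields $d_{BL}(\mu,\mu_N) < C N^{-1/m}$, the constant $C$ depending only on $n$, $d$, the covering geometry of $\textrm{supp}(\mu)$ and the total mass $|\mu|(\R^n)$. As the net points lie in $K$, by construction $\textrm{supp}(\mu_N) \subset K$ for every $N$.

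For general $\mu$ with $|\mu|(\R^n) < \infty$, I would use that non-compactness only occurs in the $\R^n$ factor (as $\widetilde{G}^n_d$ is compact), so a finite Radon measure on the Polish space $\R^n \times \widetilde{G}^n_d$ is tight: for each $\varepsilon>0$ there is $R>0$ with $\mu\big((\R^n \setminus \bar B_R)\times \widetilde{G}^n_d\big) < \varepsilon$. Writing $\mu^R := \mu|_{\bar B_R \times \widetilde{G}^n_d}$, the truncation error is controlled by
\begin{equation*}
    d_{BL}(\mu,\mu^R) = \sup_{\|\omega\|_\infty,\textrm{Lip}(\omega)\leq 1}\Big|\int_{(\R^n \setminus \bar B_R)\times \widetilde{G}^n_d} \omega\, d\mu\Big| \leq \mu\big((\R^n\setminus \bar B_R)\times \widetilde{G}^n_d\big) < \varepsilon,
\end{equation*}
which crucially uses only $\|\omega\|_\infty \leq 1$, so that $d_{BL}$ tolerates the mass mismatch between $\mu$ and $\mu^R$. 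Applying the compact construction to $\mu^R$ and the triangle inequality, then letting $\varepsilon \to 0$ and $N \to \infty$, I would extract via the monotonicity $\mathcal{V}_d^N \subset \mathcal{V}_d^{N'}$ (for $N\leq N'$) a sequence $\mu_N \in \mathcal{V}_d^N$ with $d_{BL}(\mu_N,\mu)\to 0$.

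The main obstacle is the metric-geometric input used in the compact case: obtaining the covering bound $C_K\,\delta^{-m}$ with the correct exponent $m = n + d(n-d)$, which requires controlling the geometry of $\widetilde{G}^n_d$ (via its embedding into the unit sphere of $\Lambda^d(\R^n)$) to verify that its $\delta$-covering number scales like $\delta^{-d(n-d)}$. Everything else — the lumping estimate and the tightness truncation — is routine. One should also verify carefully that the Voronoi cells form a genuine Borel partition of $K$ and that the centers can be taken in $K$, so that $\mu_N$ is a well-defined element of $\mathcal{V}_d^N$ supported in $K$.
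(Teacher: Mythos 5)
Your proposal is correct and follows essentially the same route as the paper's proof: a $\delta$-net partition of the compact support with a Lipschitz lumping estimate giving the rate $N^{-1/(n+d(n-d))}$, followed by a tightness/truncation argument for the general finite-mass case (the paper derives the covering bound you flag as the main obstacle via Ahlfors regularity of the Pl\"ucker-embedded manifold together with the $5r$-covering lemma). The only cosmetic differences are your use of a Voronoi partition instead of disjointifying the covering balls, and your explicit tracking of the total mass in the constant where the paper normalizes $\mu$ to a probability measure.
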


\begin{proof}
 We first tackle the case of compactly supported $\mu$. Without loss of generality, we may also assume that $\mu$ is a probability measure. Let $D = n+ d(n-d)$ and $B \subset \mathbb{R}^n$ be a closed ball that contains $\rm{supp}|\mu|$. For brevity, we write $M \doteq B \times \widetilde{G}^n_d$. Since we can view $M$ as a compact $D$-dimensional submanifold of $\R^n \times \Lambda^d(\mathbb{R}^n)$ (using Pl\"{u}cker embedding), M is also regular of dimension $D$ (cf. \cite{graf2007foundations}), i.e., $0<\mathcal{H}^D(M)< \infty$ and there exist $c,r_0>0$, such that
  \begin{align*}
      \frac{1}{c}r^D \leq \mathcal{H}^D \mres M(B_r(a)) \leq c r^D,\ \forall a \in M, \ r \in (0,r_0).
  \end{align*}
  Given $\varepsilon \in (0,5 r_0)$, by the 5-Times Covering Lemma (cf. \cite{Simon}), these exists a subset $\mathcal{I} \subset M$, such that $M \subset \cup_{x \in \mathcal{I}} B_{\varepsilon}(x)$ and $B_{\varepsilon/5}(x) \cap B_{\varepsilon/5}(y) = \varnothing$ for all $x \neq y \in \mathcal{I}$. Therefore,
  \begin{align*}
      \mathcal{H}^D(M) \geq \sum_{x \in \mathcal{I}} \mathcal{H}^D(M \cap B_{\varepsilon/5}(x)) 
      \geq \frac{|\mathcal{I}| \varepsilon^D}{c 5^D}.
  \end{align*}
 We can thus obtain a partition $\{A_i\}_{i=1,\cdots,|\mathcal{I}|}$ of $M$ from the the collection $\{B_{\varepsilon}(x) \cap M\}_{x\in\mathcal{I}}$ which satisfies $\sup_i \textrm{diam}(A_i) < \varepsilon$ and 
 \begin{equation*}
     |\mathcal{I}| \leq \frac{c 5^D \mathcal{H}^D(M)}{ \varepsilon^D}.
 \end{equation*}
Let $r_i = \mu(A_i)$ and $(x_i,T_i) \in A_i$ and define $\nu=\sum_{i=1}^{|\mathcal{I}|} r_i \delta_{(x_i,T_i)}$. For any $\varphi \in \textrm{Lip}_1(\mathbb{R}^n \times \widetilde{G}^n_d)$, with $\|\varphi\|_{\infty} \leq 1$, we have

\begin{align*}
\left| \int_{\mathbb{R}^n \times \widetilde{G}_d^n} \varphi(x,T)d \nu - \int_{\mathbb{R}^n \times \widetilde{G}_d^n} \varphi(x,T) d\mu\right| 
&= \left| \sum_{i=1}^{|\mathcal{I}|} \left( \mu(A_i) \varphi(x_i,T_i) - \int_{A_i}\varphi(x,T) d\mu \right) \right| \\
&\leq \sum_{i=1}^{|\mathcal{I}|} \int_{A_i}| \varphi(x_i,T_i) - \varphi(x,T)| d\mu \\
&<\sum_{i=1}^{|\mathcal{I}|} \varepsilon \mu(A_i) = \varepsilon.
\end{align*}
Taking the supremum over all $\omega \in \textrm{Lip}_1(\mathbb{R}^n \times \widetilde{G}^n_d)$ with $\|\varphi\|_{\infty} \leq 1$, we obtain $d_{BL}(\mu,\nu)< \varepsilon$. Then for each $N \in \mathbb{N}$, we can choose $\varepsilon_N = 5(C \mathcal{H}^D(M)/N)^{1/D}$ and we obtain $\mu_N \in \mathcal{V}_d^N$ such that
\begin{equation*}
    d_{BL}(\mu,\mu_N)< \frac{5 C^{1/D} (\mathcal{H}^D(M))^{1/D}}{N^{1/D}}
\end{equation*}
and in particular $\lim_{N \rightarrow +\infty} d_{BL}(\mu,\mu_N) = 0$.  

Suppose now that $\rm{supp}(\mu)$ is not compact: we show that for any $\varepsilon>0$, there exists a discrete varifold $\nu$ such that $d_{BL}(\mu,\nu)< \varepsilon$. Choose a compact set $K \subset \mathbb{R}^n \times \widetilde{G}^n_d$ such that $\mu(\mathbb{R}^n \times \widetilde{G}^n_d \setminus K)< \varepsilon/2$. From the previous case, we can find a discrete varifold $\nu$ such that $d_{BL}(\mu \mres K,\nu )<\varepsilon/2$, and hence $d_{BL}(\mu,\nu )<\varepsilon$.
\end{proof}
Note that the proposition clearly holds for non-oriented varifolds as well. Another direct consequence, thanks to proposition \ref{prop:top_comp} and remark \ref{rmk:dom_W_BL}, is the following corresponding statement for $d_{W^*}$:
\begin{corollary}
\label{cor:var_approx_W}
With the assumptions from proposition \ref{prop:ctru_kernel}, one also has $\lim_{N \rightarrow \infty} d_{W^*}(\mu_N, \mu) =0$. If in addition $W \hookrightarrow C_0^1(\R^n\times \tilde{G}_d^n)$, an equivalent upper bound as in Theorem \ref{thm:varifold_approximation} holds for $d_{W^*}(\mu,\mu_N)$.    
\end{corollary}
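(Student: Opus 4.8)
The plan is to obtain both assertions as direct consequences of the bounded-Lipschitz approximation already furnished by Theorem~\ref{thm:varifold_approximation}, combined with the topological comparisons established earlier in the section. No new estimate needs to be proved from scratch; the whole argument is a concatenation of prior results, so I expect it to be short.

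For the qualitative convergence, I would first recall that $d_{BL}$ metrizes the narrow topology (as noted after Proposition~\ref{prop:dBL_weakstar}). Hence the sequence $\mu_N \in \mathcal{V}_d^N$ produced by Theorem~\ref{thm:varifold_approximation}, which satisfies $d_{BL}(\mu_N,\mu)\to 0$, converges to $\mu$ in the narrow topology. I would then invoke Proposition~\ref{prop:top_comp}, whose conclusion is precisely that narrow convergence implies convergence in $W^*$; as stressed in the remark following that proposition, this implication rests only on the hypotheses of Proposition~\ref{prop:ctru_kernel} and is insensitive to whether $\iota_W^*$ is injective. Chaining the two gives $d_{W^*}(\mu_N,\mu)\to 0$, which is the first claim.

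For the quantitative bound under the stronger assumption $W \hookrightarrow C_0^1(\R^n\times\widetilde{G}^n_d)$, I would use the domination recorded in Remark~\ref{rmk:dom_W_BL}, namely
\[
\|\mu-\mu'\|_{W^*} \leq c_W\, d_{BL}(\mu,\mu') \quad \text{for all } \mu,\mu'\in\mathcal{V}_d .
\]
Applying this to the pair $(\mu,\mu_N)$ and substituting the explicit rate of Theorem~\ref{thm:varifold_approximation} yields
\[
d_{W^*}(\mu,\mu_N) \leq c_W\, d_{BL}(\mu,\mu_N) < \frac{c_W\,C}{N^{1/(n+d(n-d))}},
\]
i.e. the same decay rate with the constant $C$ replaced by $c_W C$. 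Note that this chain also reproves the qualitative statement in the special case of compactly supported $\mu$, but the narrow-topology argument above is still needed to cover the general, possibly non-compactly-supported, case under only the weaker hypotheses of Proposition~\ref{prop:ctru_kernel}.

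The main point requiring care --- rather than a genuine obstacle --- is the logical bookkeeping in the first step: I must use \emph{narrow} convergence, not merely weak-$*$ convergence, since it is narrow convergence that $d_{BL}$ encodes and that feeds into Proposition~\ref{prop:top_comp}. The remark certifying that Proposition~\ref{prop:top_comp} holds independently of the injectivity of $\iota_W^*$ is exactly what keeps the conclusion valid at the level of the pseudo-metric $d_{W^*}$, without having to impose the $C_0$-universality assumption of Theorem~\ref{thm:dist_general_var}.
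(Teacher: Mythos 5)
Your proof is correct and takes essentially the same route as the paper, which presents the corollary as a ``direct consequence'' of Theorem~\ref{thm:varifold_approximation} combined with Proposition~\ref{prop:top_comp} (narrow convergence implies $W^*$-convergence, valid under only the hypotheses of Proposition~\ref{prop:ctru_kernel} and regardless of injectivity of $\iota_W^*$) and Remark~\ref{rmk:dom_W_BL} (the domination $\|\mu-\mu'\|_{W^*}\leq c_W\, d_{BL}(\mu,\mu')$ under the $C_0^1$-embedding). Your bookkeeping of narrow versus weak-$*$ convergence and of the compact-support requirement for the quantitative rate is exactly what the paper's argument relies on.
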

We should point out that the asymptotic convergence rate given by the previous upper bound is rather slow, especially as the dimensions $d$ and $n$ grow. This is however under very mild assumptions on the varifold $\mu$. We expect much better convergence properties for certain specific classes of varifolds, for instance assuming Alfors regularity as in \cite{kloeckner2012approximation}, although we leave such questions for future investigation.

\subsection{Optimal approximating sequence}
In addition to the asymptotic approximation results of the previous section, we now want to construct such sequences of discrete approximating varifolds. Given any $\mu \in \mathcal{V}_d$ and $N \in \N$, a natural idea is to look for the optimal discrete varifold in $\mathcal{V}_d^N$ that approximates $\mu$ in terms of the metric $d_{W^*}$. Due to the intricate structure of the set $\mathcal{V}_d^N$ (infinite-dimensional non-convex cone), this is far from a straightforward problem. Several different approaches in some simpler contexts have been proposed to circumvent this issue, which we briefly recap. One possibility is to restrict to finite-dimensional vector spaces of $\mathcal{V}_d$ (e.g. generated by finite sets of Diracs). Works such as \cite{durrleman2009statistical,Gori2016} for instance, which are focused on the model of currents, consider dictionaries of Diracs defined on a predefined grid of point positions in $\R^n$. Then the problem can be recast as the one of finding sparse approximations of $\mu$ in such a dictionary. It remains a NP hard problem but solutions can be approached either through greedy algorithms like \textit{orthogonal matching pursuit} as proposed in \cite{durrleman2009statistical} or by considering the $L^1$ relaxation formulation leading to a standard convex LASSO program. Such ideas apply well to the specific situation of currents mainly as a result of the inherent linearity of this model: indeed, at any iteration of a matching pursuit procedure, once the optimal position of a Dirac is found, the corresponding direction vector and weight are explicitly determined. This allows to limit the search over grid of points in the spatial domain only. Unfortunately, for the general oriented varifold metrics we consider in this paper, such a property no longer holds and, as a result, these methods would involve very large dictionaries defined on grids on the product $\R^n \times \widetilde{G}_d(\R^n)$. Such an increase in dimension makes the approach numerically impractical as soon as $n\geq 3$ and $d\geq2$. Another downside is that the use of finite dictionaries and greedy algorithms like matching pursuit is not guaranteed to give an optimal approximation of varifolds for a given number $N$ of Diracs. The approach we develop in this section consists instead in directly tackling the non-convex problem of computing the projection onto $\mathcal{V}_d^N$ for the class of kernel metrics $d_{W^*}$. It shares some connection with the recent work of \cite{Chauffert2017} that considers a related problem for standard measures defined on the torus $\R^n / \mathbb{Z}^n$. 

Fix a varifold $\mu_* \in \mathcal{V}_d$. For any $N \in \mathbb{N}, \ N\geq 1$, we seek $\mu_N \in \mathcal{V}_d^N$ that is closest to $\mu_*$ for $d_{W^*}$, namely
\begin{align}\label{eq:projection_problem}
    \mu_{N} = \operatorname*{argmin}_{\mu \in  \mathcal{V}^N_d} \| \mu - \mu_* \|_{W^*}
\end{align}

By construction, if $|\mu|(\mathbb{R}^n)< \infty$ then Corollary \ref{cor:var_approx_W} will imply that $(\mu_N)$ converges to $\mu$ in the metric $d_{W^*}$. We only need to ensure that such a projection is well defined, which is the object of the following proposition:

\begin{prop} \label{thm:exist_min_proj}
Suppose all assumptions in proposition \ref{prop:ctru_kernel} and remark \ref{rk:assumption_kernel} hold. We further assume  that the functions $\rho$ and $\gamma$ defining the kernels are non-negative.
Then for any $\mu \in \mathcal{V}_d$ and $N \in \mathbb{N}$, there exists $\mu_{N} \in \mathcal{V}^N_d$ such that $\mu_{N} = \operatorname*{argmin}_{\nu \in  \mathcal{V}^N_d} \| \mu - \mu_* \|_{W^*}$ 
\end{prop}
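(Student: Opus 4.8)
The plan is to prove existence by the direct method of the calculus of variations applied to the non-negative functional $F(\nu) \doteq \|\nu - \mu_*\|_{W^*}^2$ over the cone $\mathcal{V}_d^N$. Expanding a candidate $\nu = \sum_{i=1}^N r_i \delta_{(x_i,T_i)}$ through \eqref{eq:norm_discrete_varifolds}, I would write
\[
F(\nu) = \sum_{i,j=1}^N r_i r_j\, \rho(|x_i-x_j|^2)\,\gamma(\langle T_i,T_j\rangle) - 2\sum_{i=1}^N r_i\, g(x_i,T_i) + \|\mu_*\|_{W^*}^2,
\]
where $g(x,T) \doteq \int_{\R^n\times\widetilde{G}^n_d} \rho(|x-x'|^2)\gamma(\langle T,T'\rangle)\,d\mu_*(x',T')$. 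Since $\rho$ is bounded by $\rho(0)$ (Cauchy--Schwarz for the positive kernel $k^{pos}$), $\gamma$ is continuous on the compact interval $[-1,1]$, and $\mu_*$ has finite mass, the function $g$ is bounded; moreover $g\in C_0(\R^n\times\widetilde{G}^n_d)$, because $\rho(|x-x'|^2)\to 0$ as $|x|\to\infty$ by the $C_0$ hypothesis of Proposition \ref{prop:ctru_kernel} together with dominated convergence. The infimum $m \doteq \inf_{\nu\in\mathcal{V}_d^N} F(\nu)$ is finite, as $0\le m\le F(0)=\|\mu_*\|_{W^*}^2$, so I fix a minimizing sequence $\nu^{(k)} = \sum_{i=1}^N r_i^{(k)}\delta_{(x_i^{(k)},T_i^{(k)})}$.

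The first step is to bound the weights. Discarding the non-negative off-diagonal terms of the quadratic part and retaining only the diagonal gives $\|\nu\|_{W^*}^2 \ge \rho(0)\gamma(1)\sum_i r_i^2$; here I use $\rho(0)>0$ and that we may assume $\gamma(1)>0$ (otherwise $k^G\equiv 0$ by positive-definiteness and $d_{W^*}$ is trivial). Together with the linear bound $\sum_i r_i g(x_i,T_i)\le \|g\|_\infty\sum_i r_i$, completing the square shows that $F$ dominates a coercive function of the weights, so any minimizing sequence has uniformly bounded $r_i^{(k)}$. Using compactness of $\widetilde{G}^n_d$ and passing to a subsequence, each $T_i^{(k)}\to\bar T_i$, each $r_i^{(k)}\to\bar r_i\ge 0$, and each $x_i^{(k)}$ either converges to some $\bar x_i\in\R^n$ or satisfies $|x_i^{(k)}|\to\infty$. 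I then set $\bar\nu \doteq \sum_{i\in\mathcal{B}} \bar r_i\,\delta_{(\bar x_i,\bar T_i)}$, where $\mathcal{B}$ indexes the positions that remain bounded; by construction $\bar\nu\in\mathcal{V}_d^N$, so $F(\bar\nu)\ge m$.

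It remains to show $F(\bar\nu)\le m$, i.e.\ lower semicontinuity along the subsequence, and this is where the non-negativity of $\rho$ and $\gamma$ is essential. For the cross term, escaping Diracs contribute nothing in the limit, since $g\in C_0$ forces $g(x_i^{(k)},T_i^{(k)})\to 0$ and the weights are bounded, so $\langle\nu^{(k)},\mu_*\rangle_{W^*}\to\langle\bar\nu,\mu_*\rangle_{W^*}$. For the quadratic term, I split the double sum into pairs of bounded indices, mixed pairs, and pairs of escaping indices: the bounded--bounded block converges to $\|\bar\nu\|_{W^*}^2$ by continuity; the mixed block vanishes because $|x_i^{(k)}-x_j^{(k)}|\to\infty$ forces $\rho\to 0$; and the escaping--escaping block is a sum of non-negative terms, hence $\ge 0$. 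Therefore $\liminf_k\|\nu^{(k)}\|_{W^*}^2\ge\|\bar\nu\|_{W^*}^2$, giving $m=\lim_k F(\nu^{(k)})\ge F(\bar\nu)$, and combined with $F(\bar\nu)\ge m$ this yields $F(\bar\nu)=m$, so $\mu_N\doteq\bar\nu$ is the sought minimizer. The main subtlety is exactly these escaping--escaping interactions: several Diracs may drift to infinity while staying at bounded mutual distance, so their mutual kernel interaction need not vanish; the non-negativity hypothesis is precisely what allows me to discard this block to obtain a lower bound rather than having to control its limit.
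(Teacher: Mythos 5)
Your proof is correct and follows essentially the same route as the paper's: a minimizing sequence, coercivity in the weights via the diagonal terms $\rho(0)\gamma(1)\sum_i r_i^2$ (where non-negativity of $\rho,\gamma$ lets you drop the off-diagonal terms), a split of indices into bounded and escaping positions, and discarding the escaping--escaping interaction block by non-negativity while the mixed and linear escaping contributions vanish by the $C_0$ decay of $\rho$ and of $g=K_W\mu_*$. The paper phrases this through convergence of the Gram matrices $A_m$ to a block-diagonal positive semi-definite limit rather than splitting the double sum directly, and it tacitly assumes $\gamma(1)>0$, a point you justify explicitly via the Cauchy--Schwarz degeneracy of $k^G$; these are presentational differences only.
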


\begin{proof}
  Let $\mu_m = \sum_{i=1}^N r_i^m \delta_{(x_i^m,T_i^m)}$ be a minimizing sequence, $K_W: W^* \mapsto W$ be the dual operator and $f := K_W(\mu) $. Without loss of generality, we may assume that there is a $N_1 \leq N$ such that $\sup_{1\leq i \leq N_1} |x_i^m| $ remains bounded and $\inf_{ M+1\leq i \leq N} |x_i^m|$ tends to $\infty$ as $m \rightarrow \infty$.

Observe that $\sup_{1 \leq i \leq N} \{r_i^m\}$ must be bounded. If it's not bounded, then from the assumptions that $\rho, \gamma \geq 0$, we obtain 

\begin{align*}
    \|\mu_m -\mu_* \|_{W^*} 
    &\geq \| \mu_m  \|_{W^*} -\| \mu_*  \|_{W^*} 
                         = \sqrt{\sum_{i,j=1}^N r_i^m r_j^m \rho(|x_i^m - x_j^m|^2) \gamma(\langle T_i^m,T_j^m \rangle)} - \| \mu_* \|_{W^*} \\
                         &\geq \sqrt{\sum_{i=1}^N (r_i^m)^2 \rho(0) \gamma(1)} - \| \mu_* \|_{W^*} \rightarrow \infty
\end{align*}
as $m \rightarrow \infty$, which is absurd.

Since $r_i^m,$ $T_i^m$, $f(x_i^m,T_i^m)$ and 
\begin{align*}
    A_m := (\rho(|x_i^m -x_j^m|^2) \gamma(\langle T_i^m, T_j^m \rangle))_{1 \leq i,j \leq N}
\end{align*}
 are all bounded sequences of $m$, we may replace them by convergent subsequences, thus we could assume that 

\begin{align*}
    &\lim_{m \rightarrow \infty} r_i^m = r_i, \ \ \lim_{m \rightarrow \infty} T_i^m = T_i, \ 
    \lim_{m \rightarrow \infty} f(x_i^m,T_i^m) = f_i, \ \ \lim_{m \rightarrow \infty} A_m = A.
\end{align*}
Since $\rho \in C_0(\mathbb{R})$, the matrix $A$ must has the following form:

\begin{align*}
    A = \left(\begin{array}{cc}
       B_1  & \boldsymbol{0} \\
       \boldsymbol{0}  & B_2
    \end{array} \right),
\end{align*}
where $B_1$ and $B_2$ are $N_1$-by-$N_1$ and $N-N_1$-by-$N-N_1$ semi-positive definite matrices. Combining this with the assumption $f \in C_0(\mathbb{R}^n \times \widetilde{G}^n_d)$, we obtain

\begin{align*}
    &\lim_{m \rightarrow \infty} \| \mu_m -\mu_* \|_{W^*}^2 
    = \boldsymbol{r'}^T B_1 \boldsymbol{r'} + \boldsymbol{r''}^T B_2 \boldsymbol{r''} - 2 \boldsymbol{f'}^T  \boldsymbol{r'} + \| \mu_* \|_{W^*}^2,
\end{align*}
where $\boldsymbol{r'} = (r_1,\cdots,r_{N_1})$, $\boldsymbol{r''} = (r_{N_1+1},\cdots,r_{N})$, and $\boldsymbol{f'} = (f_1,\cdots,f_{N_1})$. Since $\sup_{1 \leq i \leq N_1} |x_i^m|$ is bounded we can assume that  $\lim_{m \rightarrow \infty} x_i^m = x_i, \ 1 \leq i \leq N_1$. Let $\mu := \sum_{i=1}^{N_1} r_i \delta_{(x_i,u_i)}$, then 
\begin{align*}
     \| \mu -\mu_* \|_{W^*}^2 &= \boldsymbol{r'}^T B_1 \boldsymbol{r'} - 2 \boldsymbol{f'}^T  \boldsymbol{r'} + \| \mu_* \|_{W^*}^2 
     \leq \lim_{m \rightarrow \infty} \| \mu_m -\mu_* \|_{W^*}^2.
\end{align*}
Hence $\mu$ is a minimizer. 
\end{proof}
However, in general this projection is not unique. We also point out that the existence is a priori not guaranteed if kernels $\rho$ and $\gamma$ take negative values. It is so far an open question to determine to what extent one could generalize the result of Proposition \ref{thm:exist_min_proj}, one particular but important case being the one of current metrics obtained for $\gamma(t)=t$ which is not covered by our result.

As written in the proof of Proposition \ref{thm:exist_min_proj}, \eqref{eq:projection_problem} is equivalent to the optimization problem: 
\begin{align}
    (r_i,x_i,T_i) = \operatorname*{argmin}_{(w_i,y_i,S_i)} \| \sum_{i=1}^N w_i \delta_{(y_i,S_i)} - \mu_* \|_{W^*}^2
\end{align}
Any solution must satisfy first order optimality conditions obtained by differentiating $\|\mu_N-\mu\|_{W^*}$ with respect to the $(r_k,x_k,T_k)$. In particular, we have
\begin{align*}
    0 &= \frac{\partial \| \mu_N-\mu_* \|_{W^*}^2}{\partial r_k} \\
    &= 2 \bigg( \sum_{i=1}^N r_i \rho(|x_i-x_k|^2) \gamma(\langle T_i,T_k \rangle)  - \int_{\mathbb{R}^n \times \widetilde{G}^n_d} \rho(|x_k-x|^2) \gamma(\langle T_k,T \rangle) d \mu_*(x,T) \bigg).
\end{align*}
which gives $\langle \mu_N-\mu_*,\mu_N \rangle_{W^*} = 0$. It shows that for any $N \in \N$, $\|\mu_N\|_{W^*} \leq \|\mu_*\|_{W^*}$.

\subsection{$\Gamma$-convergence of registration functionals}
Ultimately, our purpose is to use the previous approximating discrete varifolds $\mu_N$ to approximate the diffeomorphic registration problem \eqref{eq:matching_var}. The natural question that arises is whether replacing the source varifold $\mu_0$ by its projections $\mu_N$ in \eqref{eq:matching_var} still leads to reasonable approximations (at least asymptotically) of optimal deformation fields for the original problem. In this section, we address this by showing a $\Gamma$-convergence property for these variational problems. We point out that our setting and the following proof differ quite a bit from previous results of the same type that were dealing with the specific case of surface triangulations such as \cite{Arguillere2016b}.      
   
To obtain such convergence results for solutions of variational problems, one usually requires the approximating sequence to possess certain nice properties. Specifically, assuming $\mu \in \mathcal{V}_d$ with compact support and finite mass and $\{\mu_N\} \subset \mathcal{V}^N_d$ such that $\lim_{N \rightarrow \infty} \|\mu_N - \mu \|_{W^*} = 0$, we will need that $\bigcup_{N} \textrm{supp}(\mu_N) \subset K$ for some compact set $K \subset \mathbb{R}^n$ or that $\sup_{N} |\mu_N|(\mathbb{R}^n) < \infty$. Unfortunately, this does not hold in general since convergence in $d_{W^*}$ does not allow to control the support nor the total mass of the sequence $\mu_N$.

Yet, provided that $\bigcup_{N} \textrm{supp}(\mu_N) \subset K$, we can actually retrieve the boundedness of the total mass. We assume in what follows that the kernels are such that $\rho(0)>0$ and $\gamma(1)>0$. 
   \begin{lemma}\label{lemma:unif_bdd_weight}
      Let $\{\mu_N\}$ be a sequence of discrete varifolds with finite mass such that there exists a compact $K \subset \R^n$ with $\rm{supp}(|\mu_N|) \subset K$ for all $N$. We assume that $\{\|\mu_N\|_{W^*}\}$ is bounded. Then $\{|\mu_N|(\mathbb{R}^n)\}$ is bounded. 
   \end{lemma}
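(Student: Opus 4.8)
The plan is to control the total mass $|\mu_N|(\R^n)=\sum_i r_i$ (writing $\mu_N=\sum_i r_i\delta_{(x_i,T_i)}$ with $r_i\ge 0$) by testing against a single fixed function $\omega\in W$ chosen so that it is bounded below by a positive constant on the common support. Since all the $\mu_N$ live in $\mathcal{K}\doteq K\times\widetilde{G}^n_d$, which is compact (as $K$ is compact and the Grassmannian $\widetilde{G}^n_d$ is compact), if we can produce $\omega\in W$ with $\omega\ge c>0$ on $\mathcal{K}$, then
\begin{equation*}
  c\,|\mu_N|(\R^n)=c\sum_i r_i\le \sum_i r_i\,\omega(x_i,T_i)=(\mu_N|\omega)\le \|\mu_N\|_{W^*}\|\omega\|_W .
\end{equation*}
The last inequality is just Cauchy--Schwarz for the $W^*$--$W$ duality, using $\|\iota_W^*\mu_N\|_{W^*}=\|\mu_N\|_{W^*}$. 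As $\omega$, $c$, and the bound on $\|\mu_N\|_{W^*}$ are all independent of $N$, this immediately yields $\sup_N|\mu_N|(\R^n)<\infty$.

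The remaining task, and the crux of the argument, is the construction of such a positive test function. I would take a finite $\varepsilon$-net $\{z_l=(y_l,U_l)\}_{l=1}^L$ of the compact set $\mathcal{K}$ and set
\begin{equation*}
  \omega \doteq \sum_{l=1}^L k(\cdot,z_l)=\sum_{l=1}^L \rho(|\cdot-y_l|^2)\,\gamma(\langle\,\cdot\,,U_l\rangle),
\end{equation*}
which lies in $W$ as a finite combination of kernel sections, with $\|\omega\|_W^2=\sum_{l,l'}k(z_l,z_{l'})<\infty$ by the reproducing property. Because $(z,z')\mapsto k(z,z')$ is continuous on the compact $\mathcal{K}\times\mathcal{K}$, hence uniformly continuous, and $k(z_l,z_l)=\rho(0)\gamma(1)>0$ for every $l$, I can fix $\varepsilon$ small enough that $d(z,z_l)<\varepsilon$ forces $k(z,z_l)\ge \tfrac12\rho(0)\gamma(1)$. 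Then for any $z\in\mathcal{K}$ there is some $l$ with $d(z,z_l)<\varepsilon$, and, keeping the remaining summands nonnegative, $\omega(z)\ge k(z,z_l)\ge c\doteq\tfrac12\rho(0)\gamma(1)>0$.

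The one subtlety to flag is that discarding the off-diagonal terms in the lower bound for $\omega$ uses the nonnegativity of $\rho$ and $\gamma$ (the standing assumption of this section); without a sign condition the kernel can fail to separate positive measures and the statement itself becomes false. If one only has $C_0$-universality instead of nonnegativity, the same positive test function can be obtained by a density argument: pick $g\in C_0(\R^n\times\widetilde{G}^n_d)$ with $g\ge 2c$ on $\mathcal{K}$ and approximate it within $c$ in the uniform norm by some $\omega\in W$, so that $\omega\ge c$ on $\mathcal{K}$. Either way the bound above closes the proof. As an alternative route, one could instead normalize $\widetilde\mu_N=\mu_N/|\mu_N|(\R^n)$ and argue by contradiction: if the masses blew up, then $\|\widetilde\mu_N\|_{W^*}\to0$, a weak-$*$ limit point $\widetilde\mu$ (a probability measure on $\mathcal{K}$) would satisfy $\|\widetilde\mu\|_{W^*}=0$ by Proposition \ref{prop:top_comp}, contradicting that nonzero positive measures have strictly positive $W^*$-norm; I expect the direct test-function construction to be cleaner to execute.
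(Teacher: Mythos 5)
Your proof is correct, and it takes a genuinely different route from the paper's. The paper argues by contradiction: after extracting a subsequence with $|\mu_N|(\mathbb{R}^n)\to\infty$, it localizes (by an implicit covering/pigeonhole argument on the compact set $K\times\widetilde{G}^n_d$) an amount of mass tending to infinity inside a small product region $A\times B$ on which $\rho\geq m>0$ and $\gamma\geq m'>0$, and then discards all remaining (nonnegative) terms of the quadratic form \eqref{eq:norm_discrete_varifolds} to get $\|\mu_N\|_{W^*}^2\geq m m'\bigl(\sum_{i\in\mathcal{I}_N}r_{i,N}\bigr)^2\to\infty$, a contradiction. You instead argue directly: you build a single fixed test function $\omega\in W$ (a finite sum of kernel sections over an $\varepsilon$-net of $K\times\widetilde{G}^n_d$), bounded below by $c>0$ on the common support, and conclude by duality, $c\,|\mu_N|(\mathbb{R}^n)\leq(\mu_N|\omega)\leq\|\mu_N\|_{W^*}\|\omega\|_W$. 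Both proofs consume exactly the same hypotheses --- compactness of the support, continuity of the kernels, $\rho(0)\gamma(1)>0$, and the nonnegativity of $\rho$ and $\gamma$ which is the standing assumption of this part of the paper (Proposition \ref{thm:exist_min_proj}); you are also right to flag that the sign condition is essential, since for the current kernel $\gamma(t)=t$ the sequence $\mu_N=N\delta_{(x,T)}+N\delta_{(x,-T)}$ has $\|\mu_N\|_{W^*}=0$ while $|\mu_N|(\mathbb{R}^n)=2N\to\infty$. What your route buys: an explicit, quantitative bound $|\mu_N|(\mathbb{R}^n)\leq c^{-1}\|\omega\|_W\,\sup_N\|\mu_N\|_{W^*}$ with no subsequence extraction or pigeonhole step, plus (via your density remark) the observation that the lemma also holds for any $C_0$-universal kernel irrespective of sign --- a genuine extension the paper's quadratic-form argument does not give. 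What the paper's route buys: it stays entirely inside the discrete kernel sum, never needing to construct or estimate norms of auxiliary elements of $W$.
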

   
\begin{proof}
We prove it by contradiction. Assume that $(|\mu_N|(\mathbb{R}^n))_{N \geq 1}$ is unbounded. Then, up to extracting a subsequence, we can assume that $|\mu_N|(\mathbb{R}^n) \rightarrow +\infty$. Let's write $\mu_N = \sum_{i=1}^{p_N} r_{i,N} \delta_{(x_{i,N},T_{i,N})}$. Thus $|\mu_N|(\mathbb{R}^n) = \sum_{i=1}^{p_N} r_{i,N} \rightarrow +\infty$. Since $\rho$ and $\gamma$ are continuous and $\rho(0),\gamma(1)>0$, we can find compact subsets $A \subset K$ and $B \subset \tilde{G}^n_d$ with diameters small enough, so that: $\inf_{x,y \in A} \rho(|x-y|^2)>m>0$, $\inf_{u,v \in B} \gamma(\langle u, v \rangle) > m' >0 $ and $\lim_{N \rightarrow \infty} \sum_{i\in \mathcal{I}_N} r_{i,N} = \infty$, 
where $\mathcal{I}_N := \{i: (x_{i,N},u_{i,N}) \in A \times B\}$. It follows that, as $N \rightarrow \infty$,
\begin{align*}
\|\mu_N\|_{W^*}^2 &= \sum_{i=1}^{p_N} \sum_{j=1}^{p_N} r_{i,N} r_{j,N} \rho(|x_{i,N}-x_{j,N}|^2) \gamma(\langle T_{i,N},T_{j,N} \rangle) \\
&\geq mm' \sum_{i,j \in \mathcal{I}_N}r_{i,N} r_{j,N} \\
&= mm' \left( \sum_{i \in \mathcal{I}_N}r_{i,N} \right)^2 \rightarrow \infty,
\end{align*}
which is a contradiction. 
\end{proof}

Lemma \ref{lemma:unif_bdd_weight} suggests that one should enforce the uniform compactness of the supports of the $\mu_N$. To do so in the context of the projection approach of the previous sections, we consider solving the optimization problem \eqref{eq:projection_problem} with the additional constraint that the support of $\mu_N$ stays in a compact set containing $\rm{supp}(|\mu|)$. We still have to verify the convergence of the resulting sequence:  

\begin{prop}\label{prop:nice_app_seq}
   Let $\mu_0$ be a varifold with finite mass and $K$ be a compact set in $\mathbb{R}^n$ which contains $\rm{supp}(|\mu_0|)$. Construct the approximating sequence of $\mu_0$ by solving the following constrained optimization problem:
   \begin{align*}
       &\mu_{K,N} = \operatorname*{argmin}_{\nu \in  \mathcal{V}^N_d} \| \nu - \mu \|_{W^*} \\
       &subject \ to \ \rm{supp}(|\nu|) \subset K.
   \end{align*}
   Then $\mu_{K,N}$ converges to $\mu_0$ in $d_{W^*}$ and, if the kernel $k$ is $C_0$-universal, it also converges in $d_{BL}$. 
\end{prop}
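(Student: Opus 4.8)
The plan is to produce a feasible competitor whose $d_{W^*}$-distance to $\mu_0$ tends to zero, to control $\mu_{K,N}$ by optimality, and then to upgrade $d_{W^*}$-convergence to $d_{BL}$-convergence by confining the sequence to a set of uniformly bounded mass and support. I would first note that the constrained minimizer exists: along a minimizing sequence $\nu_m=\sum_i r_{i,m}\delta_{(x_{i,m},T_{i,m})}$ one has $\|\nu_m\|_{W^*}\le \|\nu_m-\mu_0\|_{W^*}+\|\mu_0\|_{W^*}$, which is bounded, so Lemma \ref{lemma:unif_bdd_weight} (applicable since $\rho(0),\gamma(1)>0$ and $\mathrm{supp}(|\nu_m|)\subset K$) bounds the weights; as the positions lie in the compact $K$ and the orientations in the compact Grassmannian, a subsequence extraction yields a feasible limit attaining the infimum.

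For convergence in $d_{W^*}$, I would rerun the construction in the proof of Theorem \ref{thm:varifold_approximation} while forcing the atoms into $K$. In the partition $\{A_i\}$ of $K\times\widetilde{G}^n_d$, each cell with $r_i=\mu_0(A_i)>0$ intersects $\mathrm{supp}(\mu_0)$, so the representative atom can be chosen as $(x_i,T_i)\in A_i\cap\mathrm{supp}(\mu_0)$; since the spatial projection of $\mathrm{supp}(\mu_0)$ is contained in $\mathrm{supp}(|\mu_0|)\subset K$, this gives $x_i\in K$. The Lipschitz estimate of that proof is untouched because $(x_i,T_i)\in A_i$ still holds, producing a feasible $\tilde\mu_N\in\mathcal{V}^N_d$ with $d_{BL}(\tilde\mu_N,\mu_0)\to 0$, hence $d_{W^*}(\tilde\mu_N,\mu_0)\to 0$ by Corollary \ref{cor:var_approx_W}. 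Since $\tilde\mu_N$ is admissible for the constrained problem, optimality gives
\[
\|\mu_{K,N}-\mu_0\|_{W^*}\le \|\tilde\mu_N-\mu_0\|_{W^*}\to 0 \quad (N\to\infty).
\]

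For the $d_{BL}$ statement, assume $k$ is $C_0$-universal. The previous bound forces $\|\mu_{K,N}\|_{W^*}\to\|\mu_0\|_{W^*}$, in particular the norms are bounded; together with $\mathrm{supp}(|\mu_{K,N}|)\subset K$, Lemma \ref{lemma:unif_bdd_weight} yields a uniform mass bound $|\mu_{K,N}|(\mathbb{R}^n)\le M$. Enlarging $M$ so that $|\mu_0|(\mathbb{R}^n)\le M$ as well, the whole family lies in $\mathcal{V}_{d,M,K'}$ with $K'=K\times\widetilde{G}^n_d$ compact. On this set the corollary following Proposition \ref{prop:dW_finer_weakstar} shows that $d_{W^*}$ metrizes the weak-* topology, so $\mu_{K,N}\overset{*}{\rightharpoonup}\mu_0$; being supported in the fixed compact $K'$, the family is tight, and Proposition \ref{prop:dBL_weakstar} then gives $d_{BL}(\mu_{K,N},\mu_0)\to 0$.

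I expect the two delicate points to be the support bookkeeping of the second paragraph — ensuring the approximating atoms can be relocated into $K$ without spoiling the bounded-Lipschitz estimate — and the passage in the third paragraph from the bare $d_{W^*}$-convergence, where neither mass nor support is a priori controlled, to the regime $\mathcal{V}_{d,M,K'}$ on which $d_{W^*}$, weak-* and $d_{BL}$ all coincide; the mass bound of Lemma \ref{lemma:unif_bdd_weight} is precisely what bridges these two regimes. The remaining steps are a direct assembly of results established earlier in this section.
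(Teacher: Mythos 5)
Your proof is correct and follows essentially the same route as the paper's: feasible competitors obtained from the construction of Theorem \ref{thm:varifold_approximation}, optimality of $\mu_{K,N}$, the mass bound of Lemma \ref{lemma:unif_bdd_weight}, and then Proposition \ref{prop:dW_finer_weakstar} (or its corollary) together with Proposition \ref{prop:dBL_weakstar} to upgrade $d_{W^*}$-convergence to weak-* and then $d_{BL}$-convergence. Your write-up is in fact more careful than the paper's two-line proof, which leaves implicit both the existence of the constrained minimizer and the relocation of the competitor's atoms into $A_i\cap\mathrm{supp}(\mu_0)\subset K\times\widetilde{G}^n_d$ that is needed to make the competitors feasible for the support-constrained problem.
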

\begin{proof}
   Thanks to Theorem \ref{thm:varifold_approximation} and Lemma \ref{lemma:unif_bdd_weight}, we immediately get that $\|\mu_{K,N} - \mu_0\|_{W^*} \rightarrow 0$ as $N \rightarrow \infty$ and $\sup_N(|\mu_{K,N}|(\mathbb{R}^n))< \infty$. Moreover, if $k$ is $C_0$-universal, then by Proposition \ref{prop:dW_finer_weakstar} it implies that $\mu_{K,N} \overset{\ast}{\rightharpoonup} \mu_0$. Since $ \bigcup_N \rm{supp}(|\mu_{K,N}|) \subset K$ and $\sup_N(|\mu_{K,N}|(\mathbb{R}^n))< \infty$, weak-* convergence implies that $\mu_{K,N}$ converges to $\mu_0$ in $d_{BL}$ by Proposition \ref{prop:dBL_weakstar}. 
\end{proof}

We are now able to state the main result of this section. We assume that the source/template varifold $\mu_0$ is compactly supported and we fix $K$ is a compact subset of $\R^n$ that contains $\rm{supp}(|\mu_0|)$. Then for any $N \in \N, \ N\geq 1$, $\mu_{K,N}$ is defined as in Proposition \ref{prop:nice_app_seq} and we introduce the following energy functionals $E_N : \ L^2([0,1],V) \rightarrow \R_+$:
\begin{align}
\label{eq:energy_optcontrol_approx}
          &E_N(v) \doteq \frac{1}{2} \int_{0}^{1} \|v_t\|_V^2 dt + \lambda \|\mu_{K,N}(1) - \mu_{tar} \|_{W^*}^2 \nonumber\\
          &subj \ to \ \left\{ \begin{array}{l}
              \partial_t \varphi^v_t = v_t \circ   \varphi^v_t, \  \varphi^v_0 = id  \\
              \mu_{K,N}(t) = ( \varphi^v_t)_{\#} \mu_{K,N} 
          \end{array} \right.
\end{align}
which are the equivalent to the energy $E$ of the original problem \eqref{eq:matching_var} but replacing the template varifold $\mu_0$ by its approximations $\mu_{K,N}$. 

\begin{theorem}\label{thm:convergence_sol}
      With the above notations, we assume that the reproducing kernel $k$ of $W$ is $C_0$-universal and satisfies all the conditions of Proposition \ref{thm:exist_min_proj}. We also assume the continuous embedding $V \hookrightarrow C_0^2(\mathbb{R}^n,\mathbb{R}^n)$. Then, the sequence of functionals $E_N$ $\Gamma$-converges to $E$ for the weak topology on $L^2([0,1],V)$. Consequently, if $v_N$ is a global minimizer of $E_N$ for each $N\geq 1$, then $(v^N)$ is bounded in $L^2([0, 1],V)$ and every cluster point for the weak topology of $L^2([0,1],V)$ is a global minimum of $E$.  
\end{theorem}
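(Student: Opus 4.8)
The plan is to verify the two defining inequalities of $\Gamma$-convergence for the weak topology of $L^2([0,1],V)$ and then to deduce the statement on minimizers from equicoercivity. Write $R(v) = \tfrac12\int_0^1\|v_t\|_V^2\,dt$ for the regularization term, which is common to all the functionals, and $F_N(v) = \|(\varphi_1^v)_{\#}\mu_{K,N} - \mu_{tar}\|_{W^*}^2$, $F(v) = \|(\varphi_1^v)_{\#}\mu_0 - \mu_{tar}\|_{W^*}^2$ for the fidelity terms, so that $E_N = R + \lambda F_N$ and $E = R + \lambda F$. Everything hinges on a single joint continuity statement, which will also be the main obstacle: \emph{if $v_N \rightharpoonup v$ weakly in $L^2([0,1],V)$, then $(\varphi_1^{v_N})_{\#}\mu_{K,N} \to (\varphi_1^v)_{\#}\mu_0$ in $W^*$.}

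To establish this claim I would proceed as follows. First recall the flow-continuity property underlying the existence result (Theorem \ref{thm:exist_opt_control}): weak convergence $v_N \rightharpoonup v$ together with $V\hookrightarrow C_0^2$ forces $\varphi_1^{v_N}\to\varphi_1^v$ and $d\varphi_1^{v_N}\to d\varphi_1^v$ uniformly on every compact set, with uniform $C^1$ bounds over $N$. By Proposition \ref{prop:nice_app_seq} (together with Lemma \ref{lemma:unif_bdd_weight}) the approximations satisfy $\mu_{K,N}\to\mu_0$ narrowly, $\bigcup_N \mathrm{supp}(|\mu_{K,N}|)\subset K$ and $\sup_N|\mu_{K,N}|(\mathbb{R}^n)<\infty$. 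I would then test the pushforwards against an arbitrary bounded continuous $\omega$ and split the difference $\bigl((\varphi_1^{v_N})_{\#}\mu_{K,N}|\omega\bigr) - \bigl((\varphi_1^v)_{\#}\mu_0|\omega\bigr)$ into: (i) the difference of the two pushforward integrands evaluated against the common measure $\mu_{K,N}$, and (ii) the quantity $\bigl((\varphi_1^v)_{\#}\mu_{K,N}|\omega\bigr) - \bigl((\varphi_1^v)_{\#}\mu_0|\omega\bigr)$ for the \emph{fixed} diffeomorphism $\varphi_1^v$. For (i), the integrands $(x,T)\mapsto \omega(\varphi_1^{v_N}(x),d_x\varphi_1^{v_N}\cdot T)\,J_T\varphi_1^{v_N}(x)$ take values in a fixed compact set (the flows move $K$ into a bounded region and $\widetilde{G}^n_d$ is compact) and converge uniformly there; by uniform continuity of $\omega$ and uniform convergence and boundedness of the Jacobians $J_T\varphi_1^{v_N}$, the integrand difference tends to $0$ uniformly on $K\times\widetilde{G}^n_d$, and the uniform mass bound kills the integral. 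For (ii), the map $(x,T)\mapsto \omega(\varphi_1^v(x),d_x\varphi_1^v\cdot T)\,J_T\varphi_1^v(x)$ is a fixed bounded continuous function, so narrow convergence $\mu_{K,N}\to\mu_0$ gives (ii) $\to 0$. Hence $(\varphi_1^{v_N})_{\#}\mu_{K,N}\to(\varphi_1^v)_{\#}\mu_0$ narrowly, and Proposition \ref{prop:top_comp} upgrades this to convergence in $W^*$, using only the embedding $W\hookrightarrow C_0$ of Proposition \ref{prop:ctru_kernel}.

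Granting the continuity claim, the two inequalities follow quickly. For the $\liminf$ inequality, let $v_N\rightharpoonup v$; since the squared $W^*$-distance is continuous, $F_N(v_N)\to F(v)$, while $R$ is weakly lower semicontinuous as the square of a Hilbert norm, so $\liminf_N E_N(v_N) \geq R(v) + \lambda F(v) = E(v)$. For the recovery (limsup) inequality I would take the constant sequence $v_N=v$, which converges weakly to $v$: then $R(v_N)=R(v)$ and, by the continuity claim applied with a fixed diffeomorphism, $F_N(v)\to F(v)$, whence $E_N(v)\to E(v)$ and $\limsup_N E_N(v)\leq E(v)$. This shows that $E_N$ $\Gamma$-converges to $E$; since all relevant sublevel sets are bounded and the weak topology is metrizable on bounded subsets of the (separable) Hilbert space $L^2([0,1],V)$, the sequential description used here is adequate.

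Finally, for the conclusion on minimizers I would check equicoercivity: because $F_N\geq 0$, one has $E_N(v)\geq \tfrac12\|v\|_{L^2([0,1],V)}^2$, so every sublevel set is bounded, hence weakly relatively compact. If $v_N$ minimizes $E_N$, then $E_N(v_N)\leq E_N(0)=\lambda\|\mu_{K,N}-\mu_{tar}\|_{W^*}^2$, which is bounded since $\mu_{K,N}\to\mu_0$ in $W^*$ by Proposition \ref{prop:nice_app_seq}; thus $(v_N)$ is bounded and admits weak cluster points. The fundamental theorem of $\Gamma$-convergence then gives that every cluster point is a global minimizer of $E$: for a subsequence $v_{N_k}\rightharpoonup v^*$ and any competitor $w$ with recovery sequence $w_{N_k}$, the $\liminf$ inequality yields $E(v^*)\leq \liminf_k E_{N_k}(v_{N_k}) = \liminf_k \min E_{N_k}\leq \limsup_k E_{N_k}(w_{N_k}) \leq E(w)$. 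I expect the genuinely delicate point throughout to be the joint continuity of the pushforward action, namely reconciling the $C^1$ flow convergence coming from weak $L^2$ convergence of the controls with the mere narrow convergence of the measures $\mu_{K,N}$, while correctly handling the volume-change Jacobian $J_T\varphi$.
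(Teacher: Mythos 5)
Your proof is correct, and it follows the same overall skeleton as the paper's (liminf inequality via a joint continuity claim for $(v,N)\mapsto(\varphi_1^{v})_{\#}\mu_{K,N}$, recovery by the constant sequence, then the standard $\Gamma$-convergence consequence), but the way you establish the key continuity claim differs in a useful way. The paper splits $(\varphi_1^{v^N})_{\#}\mu_{K,N}-(\varphi_1^{\bar v})_{\#}\mu_0$ into $[(\varphi_1^{v^N})_{\#}\mu_{K,N}-(\varphi_1^{v^N})_{\#}\mu_0]+[(\varphi_1^{v^N})_{\#}\mu_0-(\varphi_1^{\bar v})_{\#}\mu_0]$, so its first term tests the signed measure $\mu_{K,N}-\mu_0$ against integrands that depend on $N$; to conclude it invokes a domination step (bounding by $\int \sup_N J_S\varphi_1^{v^N}\,d(\mu_{K,N}-\mu_0)$) that is not literally valid for a signed measure and implicitly needs exactly the uniform-convergence argument you make explicit. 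Your transposed split $[(\varphi_1^{v_N})_{\#}\mu_{K,N}-(\varphi_1^{v})_{\#}\mu_{K,N}]+[(\varphi_1^{v})_{\#}\mu_{K,N}-(\varphi_1^{v})_{\#}\mu_0]$ avoids this: the varying-diffeomorphism difference is integrated against the single measure $\mu_{K,N}$ (killed by uniform convergence of the integrands on $K\times\widetilde G^n_d$ plus the uniform mass bound of Lemma \ref{lemma:unif_bdd_weight}), and narrow convergence is applied only to one \emph{fixed} bounded continuous test function, which is precisely its definition. A second difference: the paper estimates directly in $W^*$ by testing against the unit ball of $W$, and its second term uses the $C_0^1$-type bound from the proof of Theorem \ref{thm:exist_opt_control}, hence implicitly the embedding $W\hookrightarrow C_0^1$, which is not among the stated hypotheses of the theorem; your route through narrow convergence of the pushforwards upgraded by Proposition \ref{prop:top_comp} needs only $W\hookrightarrow C_0$ from Proposition \ref{prop:ctru_kernel}, so it runs under strictly the stated assumptions. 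Finally, you spell out the equicoercivity bound $E_N(v_N)\le E_N(0)=\lambda\|\mu_{K,N}-\mu_{tar}\|_{W^*}^2$ and the metrizability of the weak topology on bounded sets, points the paper leaves implicit in its ``Consequently'' clause; this makes your treatment of the minimizer statement more complete than the published one.
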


\begin{proof}

We first show that whenever $v^N$ converges to $\bar{v}$ weakly in $L^2([0,1],V)$, we have
\begin{align*}
    E(\bar{v}) \leq \liminf_{N \rightarrow \infty} E_N(v^N).
\end{align*}
Since $v \mapsto \int_0^1 \|v\|_{V}^2 dt$ is lower semicontinuous with respect to the weak topology, we only need to prove the following,
\begin{align}\label{eq:conv_deform_1}
    \lim_{N \rightarrow \infty} \|(\varphi_1^{v^N})_{\#} \mu_{K,N} - \varphi_1^{\bar{v}} \cdot \mu_0 \|_{W^*} = 0.
\end{align}
For all $\omega \in W$ with $\| \omega \|_{W} \leq 1$, we have

\begin{align*}
\left|\left((\varphi_1^{v^N})_{\#} \mu_{K,N} - (\varphi_1^{v^N})_{\#} \mu_0 | \omega \right) \right| 
&= \bigg{|} \int_{K \times \tilde{G}^n_d} J_S \varphi_1^{v^N}(x) 
 \omega (\varphi_1^{v^N}(x),d_x \varphi_1^{v^N} \cdot S ) d (\mu_{K,N} - \mu_0) \bigg{|} \\
&\leq C_1  \int_{K \times \tilde{G}^n_d} \sup_{N \geq 1} J_S \varphi_1^{v^N} d (\mu_{K,N} - \mu_0) \\
&\leq C_1 \int_{\mathbb{R}^n \times \tilde{G}^n_d} g(x,T) d (\mu_{K,N} - \mu_0),
\end{align*}
where $g \in C_c(\mathbb{R}^n \times \tilde{G}^n_d)$ and $\sup_{N \geq 1} J_S \varphi_1^{v^N} \leq g(x,S)$, for all $(x,S) \in K \times \tilde{G}^n_d$. Similar to the computation done in the proof of Theorem \ref{thm:exist_opt_control}, we see that

\begin{align*}
&\left|\left((\varphi_1^{v^N})_{\#} \mu_0 - (\varphi_1^{\bar{v}})_{\#} \mu_0 | \omega \right) \right| 
\leq C_2 \| (\varphi_1^{v^N} - \varphi_1^{\bar{v}})|_K \|_{1,\infty}.
\end{align*}
Taking supremum over all $\omega \in W$ with $\|\omega\|_W \leq 1$, we obtain the following inequality,
\begin{align*}
&\|(\varphi_1^{v^N})_{\#} \mu_{K,N} - (\varphi_1^{\bar{v}})_{\#} \mu_0 \|_{W^*} 
\leq  C_1  (\mu_{K,N}-\mu_0| g) 
+C_2 \| (\varphi_1^{v^N} - \varphi_1^{\bar{v}})|_K \|_{1,\infty}.
\end{align*}
From Proposition \ref{prop:nice_app_seq}, $\mu_{K,N}$ converges to $\mu_0$ in the narrow topology. Hence the right hand side in the equation above converges to $0$ as $N \rightarrow \infty$. This proves \eqref{eq:conv_deform_1}. 

Second, we need to show that for each $\bar{v} \in L^2([0,1],V)$, there exists a sequence $v^N$ converging to $\bar{v}$ weakly such that
\begin{align*}
    E(\bar{v}) \geq \limsup_{N \rightarrow \infty} E_N(v^N).
\end{align*}
In fact, it suffices here to take $v^N$ to be the constant sequence $v^N = \bar{v}$ since, by a similar argument to the proof of \eqref{eq:conv_deform_1}, it leads to  
 \begin{align}\label{eq:conv_deform_2}
 \lim_{N \rightarrow \infty}\|(\varphi_1^{\bar{v}})_{\#} \mu^N - (\varphi_1^{\bar{v}})_{\#} \cdot \mu \|_{W^*} = 0
 \end{align}
and thus implies that
  \begin{align*}
      \limsup_{N \rightarrow \infty} E_N(v^N) = \lim_{N \rightarrow \infty} E_N(\bar{v}) = E(\bar{v}).
  \end{align*}
\end{proof}

Note that we stated the result of Theorem \ref{thm:convergence_sol} in the situation where only the source varifold $\mu_0$ is approximated by the projection approach that we presented in the previous sections but one can easily extend it to the scenario in which both source and target are replaced by discrete approximating sequences, the conclusion being the same in that case.

\section{Numerical considerations}
\label{sec:numerics}
Having introduced a variational formulation for the varifold registration problem together with an approach for projecting onto the space of discrete varifolds with fixed number of Diracs, we now turn more specifically to the numerical implementation of methods for solving those problems. The first hurdle, which we start by addressing in Section \ref{ssec:frame_compression}, is to define an adequate framework for representing and computing with elements of the oriented Grassmannian.

\subsection{Frame representation for metric computation and quantization}
\label{ssec:frame_compression}
In order to come up with a computationally effective representation of $\widetilde{G}_d(\R^n)$ and by extension of discrete oriented varifolds, we consider a slightly different setting than the Pl\"{u}cker embedding idea of Remark \ref{rem:Grassmannian}, primarily because the dimension of the embedding vector space $\Lambda^d(\R^n)$ may become prohibitively large in practice. We may instead choose to represent an element $T \in \widetilde{G}_d(\R^n)$ by an oriented frame $(u^{(1)},\ldots,u^{(d)}) \in \R^{n \times d}$ of independent vectors for which $T=\text{Span}(u^{(1)},\ldots,u^{(d)})$. Such a representation is of course not unique since elements of $\widetilde{G}_d(\R^n)$ are equivalence classes of oriented frames but we leave to the next section the more thorough analysis of the additional invariances that this representation will imply.

We will in fact go one step further by also incorporating the weight of Dirac varifolds in this frame representation itself, which is done as follows. Let $\mu$ be a discrete varifold of the form $\mu = \sum_{i=1}^{N} r_i \delta_{(x_i,T_i)}$. For each $i$, we consider a frame $\{u_i^{(1)},\cdots,u_i^{(d)}\}$ such that
\begin{equation}
\label{eq:frame_representation_mu}
    T_i = \frac{u_i^{(1)} \wedge \cdots \wedge u_i^{(d)}}{|u_i^{(1)} \wedge \cdots \wedge u_i^{(d)}|}  \textrm{ and } r_i = |u_i^{(1)} \wedge\cdots\wedge u_i^{(d)}|.
\end{equation}
In other words, the oriented space spanned by the frame $\{u_i^{(1)},\cdots,u_i^{(d)}\}$ corresponds to $T_i$ while its $d$-volume matches the weight $r_i$. Given such a choice of frame for each $i$, we can then identify $\mu$ with the (non-unique) state variable $q = (x_i,u^{(1)}_i,\cdots,u^{(d)}_i)_{i=1,\cdots,N}$ in the vector space $\mathbb{R}^{Nn(d+1)}$. Conversely, such a frame $q$ with $(u^{(1)}_i,\cdots,u^{(d)}_i)$ a matrix of rank $k$ for all $i$, corresponds to the (unique) discrete oriented varifold defined by the relations of \eqref{eq:frame_representation_mu}; we will denote it by $\mu^q$ in what follows.   

In this representation, the kernel metrics for discrete varifolds expressed in \eqref{eq:norm_discrete_varifolds} can be explicitly written as
\begin{equation}
\label{eq:norm_discrete_varifolds_frame}
       \langle \mu, \mu' \rangle_{W^*}^2 =\sum_{i=1}^N\sum_{j=1}^M  r_i r'_j \rho(|x_i-x'_j|^2) \gamma\left(\frac{1}{r_i r'_j}\det(u_i^{(k)} \cdot {u'}_j^{(l)})_{k,l}\right) 
\end{equation}
where $r_i = |u_i^{(1)} \wedge\cdots\wedge u_i^{(d)}| = \sqrt{\det(u_i^{(k)} \cdot u_i^{(l)})_{k,l}}$. Note that this expression does not depend on the choice of frames that satisfy the conditions of \eqref{eq:frame_representation_mu} for $\mu$ (and similarly for $\mu'$). In the case where $\mu'$ is a more general non-discrete varifold in $\mathcal{V}_d$, the computation of $\langle \mu, \mu' \rangle_{W^*}^2$ involves integrals over $\R^n \times \widetilde{G}_d(\R^n)$ of the kernel functions, which requires introducing specific quadrature schemes for approximating them. We do not address those issues in more details in this work as it needs particular discussion depending on the nature, regularity and dimension of the varifolds under consideration. Provided such adequate quadrature schemes have been defined, the $W^*$ metric then formally reduces to an expression equivalent to \eqref{eq:norm_discrete_varifolds_frame} in which the $x_j',u'_j$ and $r'_j$ are now the quadrature nodes and associated weights of the scheme.

In this setting, the solution to the projection problem \eqref{eq:projection_problem} can be computed by an iterative descent strategy on the vector $q=(x_i,u^{(1)}_i,\cdots,u^{(d)}_i)_{i=1,\cdots,N}$. The gradient of $q \mapsto \|\mu^q-\mu_*\|_{W^*}^2$ can be computed by direct differentiation of expressions like \eqref{eq:norm_discrete_varifolds_frame} with respect to the $x_i$ and $u_i^{(l)}$. In practice, computations of varifold kernel metrics for different classes of kernels and gradients of the metrics can be conveniently implemented with automatic differentiation pipelines. In our MATLAB implementation, we make use of the recent KeOps library \cite{libkp} which allows to generate CUDA functions for the low-level kernel sum evaluations and their automatic differentiation. The optimization itself is done using a limited memory BFGS algorithm from the HANSO library \cite{hanso} which we typically initialize by taking a random subset of $N$ Diracs composing the varifold $\mu_*$. Note that one of the main downside of this projection algorithm, in contrast with the previously mentioned approach of fixing a dictionary and solving a convex sparse decomposition problem, is that we can provide no general guarantees of convergence to a global minimum of \eqref{eq:projection_problem}. Results of this algorithm are discussed below in Section \ref{ssec:results_approx_reg}.

\subsection{Discrete registration model}
\label{ssec:discrete_registration}
This frame representation also provides a convenient setting to express the diffeomorphism action and registration problem on discrete varifolds. Indeed, let $\varphi$ be a diffeomorphism of $\R^n$ and $\mu \in \mathcal{V}_d^N$, the pushforward action $\varphi_{\#} \mu$ in \eqref{eq:def_pushforward} is equivalent to the following action in the frame model:
\begin{align*}
    \varphi_{\#} q := (\varphi(x_i),d_x \varphi (u^{(1)}_i),\cdots,d_x \varphi (u^{(d)}_i))_{i=1,\cdots,N} .
\end{align*}
Now, this allows us to rewrite the former infinite-dimensional optimal control problem by considering instead the finite-dimensional state variable $q \in \R^{Nn(d+1)}$. In the next paragraphs, we give a direct derivation of the optimality conditions in this discrete setting, in order to arrive at simpler and more explicit equations than the general abstract derivations presented in Section \ref{ssec:general_PMP}. Note that the resulting Hamiltonian equations we obtain are eventually very similar to the ones appearing in the 1st-order jets model studied in \cite{Sommer2013,Jacobs2013}, although there are a few notable differences due to the specific extra invariances attached to the varifold framework (c.f.  \cite{hsieh2019diffeomorphic} for a more detailed discussion in the $d=1$ case).      

Following once again the Pontryagin maximum principle approach, the Hamiltonian for this discrete representation is given by:
\begin{equation}
\label{eq:def_Hamiltonian}
H(q,p,v) \doteq
\sum_{i=1}^N \left[ p_i^{x}\cdot v(x_i) + \sum_{k=1}^d  p_i^{u_k}\cdot d_{x_i} v(u_i^{(k)}) \right]  - \frac{1}{2}\|v\|^2_V
\end{equation}
with $p^x,p^{u_k} \in \R^n$ denoting respectively the costates for the position $x$ and frame vector $u^{(k)}$ variables.
The PMP then shows that optimal trajectories of the registration problem are governed by the dynamical system:
\begin{align} 
\label{eq:ham_eq_discrete}
\left\{ \begin{array}{l}
\dot{x}_i = v_t(x_i) \\
\dot{u}_i^{(k)} = d_{x_i} v(u_i^{(k)}) \\
\dot{p}_i^{x} =  - d_{x_i}v^T p_i^{x} - \sum_{k=1}^d d_{x_i}^{(2)}v(\cdot,u_i^{(k)})^T p_i^{u_k}  \\
\dot{p}_i^{u_k} = - d_{x_i}v^T p_i^{u_k} \\
\end{array} \right.
\end{align}
while optimal vector fields $v$ satisfy
\begin{align}
\label{eq:optimal_discrete_v}
    v_t(\cdot) &= \sum_{i=1}^N \left( K(x_i(t),\cdot)p^x_i(t)  
    + \sum_{k=1}^d \partial_1 K(x_i(t),\cdot)(u_i^{(k)}(t)) \cdot p_i^{u_k} \right). 
\end{align}
Plugging the above expression of $v$ with respect to $(p,q)$ in the Hamiltonian \eqref{eq:def_Hamiltonian} gives the reduced Hamiltonian $H_r(p,q) \doteq H(p,q,v)$ which writes:
\begin{align}
    \label{eq:reduced_Hamiltonian}
   H_r(p,q) = \frac{1}{2}\sum_{i,j=1}^{N} \Big[ &p_i^x \cdot K(x_i,x_j) p_j^x  + p_i^x \cdot \sum_{k=1}^d \partial_1 K(x_j,x_i)(u_j^{(k)}) \cdot p_j^{u_k} \nonumber\\
   &+ \sum_{k=1}^{d}  p_i^{u_k} \cdot \partial_2 K(x_j,x_i)(u_j^{(k)}) p_j^x + \sum_{k,l=1}^{d} p_i^{u_k} \cdot \partial^2_{1,2} K(x_j,x_i)(u_j^{(l)},u_i^{(k)}) p_j^{u_l} \Big]
\end{align}
and \eqref{eq:ham_eq_discrete} becomes a coupled system in the variables $q$ and $p$ called the reduced Hamiltonian equations. Consequently, the set of optimal paths is entirely determined by the initial values $(q(0),p(0))$ and the value of the reduced Hamiltonian $H_r(p(t),q(t)) = \frac{1}{2} \|v_t\|_V^2$ is conserved along an optimal trajectory.  

There are in addition several other conserved quantities in such a system as evidenced by the following lemma: 
\begin{lemma}
\label{lemma:conservation_forward_eq}
For any $i=1,\ldots,N$, the matrix 
\begin{align*}
D^i(t)  \doteq  \left( \langle u_i^{(k)}(t), p_i^{u_{\ell}}(t) \rangle \right)_{1 \leq k, \ell \leq d},
\end{align*}
is constant in time.
\end{lemma}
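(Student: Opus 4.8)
The plan is to differentiate each scalar entry $D^i_{k\ell}(t) = \langle u_i^{(k)}(t), p_i^{u_\ell}(t) \rangle$ directly in time and to observe that the result vanishes as a consequence of the structure of the evolution equations \eqref{eq:ham_eq_discrete}. The key point I would emphasize is that this conservation reduces to a purely linear-algebraic fact: if a vector evolves by $\dot{u} = A u$ and a covector by $\dot{p} = -A^\top p$ for the same (time-dependent) matrix $A$, then the pairing $\langle u, p \rangle$ is constant. Here the common matrix is $A = d_{x_i} v_t$, the Jacobian of the optimal velocity field evaluated along the trajectory $x_i(t)$, and it is worth stressing that only the second and fourth equations of \eqref{eq:ham_eq_discrete} enter the computation: the position variables $x_i$ and their costates $p_i^x$ play no role whatsoever.

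Concretely, I would apply the product rule to obtain
\begin{align*}
\frac{d}{dt} D^i_{k\ell}(t) = \langle \dot{u}_i^{(k)}, p_i^{u_\ell} \rangle + \langle u_i^{(k)}, \dot{p}_i^{u_\ell} \rangle,
\end{align*}
and then substitute $\dot{u}_i^{(k)} = d_{x_i} v_t(u_i^{(k)})$ and $\dot{p}_i^{u_\ell} = -d_{x_i} v_t^\top p_i^{u_\ell}$ from \eqref{eq:ham_eq_discrete}. Using the defining property of the transpose, $\langle d_{x_i} v_t(u_i^{(k)}), p_i^{u_\ell} \rangle = \langle u_i^{(k)}, d_{x_i} v_t^\top p_i^{u_\ell} \rangle$, the two contributions are exactly opposite, so that
\begin{align*}
\frac{d}{dt} D^i_{k\ell}(t) = \langle u_i^{(k)}, d_{x_i} v_t^\top p_i^{u_\ell} \rangle - \langle u_i^{(k)}, d_{x_i} v_t^\top p_i^{u_\ell} \rangle = 0.
\end{align*}
Since this holds for every pair of indices $k,\ell$ and every $i$, the whole matrix $D^i(t)$ is constant in time.

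There is essentially no genuine obstacle in this argument; the only point requiring care is the bookkeeping of the adjoint, namely correctly reading $d_{x_i} v_t^\top$ in the costate equation as the transpose of the Jacobian appearing in the state equation, so that the cancellation is manifest. I would also remark, as a sanity check and for later use, that the conservation is completely independent of the particular reduced form \eqref{eq:optimal_discrete_v} of the optimal $v_t$: it relies only on the paired structure $\dot{u} = A u$, $\dot{p} = -A^\top p$ of the flow and its dual, which is the discrete manifestation of the preservation of the natural pairing between the tangent and cotangent directions along the Hamiltonian trajectories. This also explains conceptually why one obtains $d^2$ conserved quantities per particle in addition to the reduced Hamiltonian.
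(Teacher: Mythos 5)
Your proof is correct and follows exactly the same route as the paper: differentiate each entry $\langle u_i^{(k)}, p_i^{u_\ell}\rangle$, substitute the state and costate equations $\dot{u}_i^{(k)} = d_{x_i}v_t(u_i^{(k)})$ and $\dot{p}_i^{u_\ell} = -d_{x_i}v_t^{T} p_i^{u_\ell}$ from \eqref{eq:ham_eq_discrete}, and observe the cancellation via the defining property of the transpose. The additional remarks on the independence from the reduced form \eqref{eq:optimal_discrete_v} are accurate but not needed for the argument.
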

\begin{proof}
Using the Hamiltonian equations written above, we have for all $k,l=1,\ldots,d$
\begin{equation*}
\frac{d}{dt} \left(D^i(t) \right)_{k,\ell} = \langle d_{x_i}v(u_i^{(k)}(t)), p_i^{u_{\ell}} \rangle - \langle u_i^{(k)}, d_{x_i}v^T p_i^{u_{\ell}} \rangle = 0.
\end{equation*}
Hence $D^i(t)$ is a constant matrix. \qed
\end{proof}
Note that, at this point, all those equations are fundamentally modelling the deformation of the frames $\{x_i,(u_i^{(k)})\}$ but are not yet taking into account the invariances that result from the representation of the discrete oriented varifolds as oriented frames. Those extra invariances can be derived from the boundary conditions of the PMP:
\begin{equation}
\label{eq:bd_condition_discrete_PMP}
p(1) = - \partial_q g(q)|_{q=q(1)}, \ \text{with} \ g(q) = \lambda \|\mu^q - \mu_{tar}\|_{W^*}^2.
\end{equation}
As a clear consequence of \eqref{eq:frame_representation_mu}, $\mu^q$ and thus $g(q)$ are independent of the choices of the frame vectors $(u_i^{(k)})_{k=1,\dots,d}$ that span the same oriented vector spaces $T_i$ with the same $d$-volumes $r_i$. This in turn leads to a set of conditions satisfied by the different components of the final costate $p(1)$ and, with Lemma \ref{lemma:conservation_forward_eq}, of the full path $p(t)$. These are summed up by the following result:

\begin{prop}
\label{prop:invariance_momentum}
Let $(q(t),p(t))$ be optimal trajectory, then for all $i$, the matrices $D^i(t)$ as defined above are constant scalar matrices. In particular, we have $p_i^{u_k}(t) \perp \text{Span}(\{u_i^{(\ell)}(t)\}_{\ell\neq k})$ for all $t \in [0,1]$, $i=1,\ldots,N$ and $k=1,\ldots,d$.
\end{prop}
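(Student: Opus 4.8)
The plan is to combine the time-invariance of $D^i(t)$ already established in Lemma \ref{lemma:conservation_forward_eq} with the frame-reparametrization invariance of the endpoint functional $g$. Since Lemma \ref{lemma:conservation_forward_eq} gives that each $D^i(t)$ is constant in time, it suffices to prove that $D^i(1)$ is a scalar matrix, i.e. proportional to the identity; the full-path statement and the orthogonality of the costates then follow at once.

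First I would make precise the symmetry alluded to just before the statement. Writing $U_i = (u_i^{(1)},\ldots,u_i^{(d)}) \in \R^{n\times d}$ for the $i$-th frame, any right multiplication $U_i \mapsto U_i A$ with $A \in SL(d,\R)$ leaves both the oriented span $T_i$ and the $d$-volume $r_i = |u_i^{(1)}\wedge\cdots\wedge u_i^{(d)}|$ unchanged (orientation forces $\det A>0$ while equality of volume forces $|\det A|=1$, hence $A\in SL(d,\R)$). Consequently $\mu^q$, and therefore $g(q)=\lambda\|\mu^q-\mu_{tar}\|_{W^*}^2$, is invariant under this action at every $i$ independently. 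Differentiating along the one-parameter subgroup $A(s)=\mathrm{Id}+sB$ generated by an arbitrary traceless $B\in\R^{d\times d}$ yields the infinitesimal variation $\delta u_i^{(k)}=\sum_{\ell} u_i^{(\ell)} B_{\ell k}$ (all other frame vectors and positions held fixed), along which $g$ is stationary:
\begin{equation*}
0=\delta g=\sum_{k=1}^d \partial_{u_i^{(k)}} g \cdot \delta u_i^{(k)} = \sum_{k,\ell} \left( \partial_{u_i^{(k)}} g \cdot u_i^{(\ell)} \right) B_{\ell k}.
\end{equation*}

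Next I would invoke the boundary condition \eqref{eq:bd_condition_discrete_PMP}, which reads $p_i^{u_k}(1) = -\partial_{u_i^{(k)}} g(q(1))$. Substituting at $q=q(1)$ turns the stationarity identity into
\begin{equation*}
\sum_{k,\ell} \langle u_i^{(\ell)}(1), p_i^{u_k}(1) \rangle\, B_{\ell k} = 0 \quad \text{for every traceless } B,
\end{equation*}
that is, the Frobenius inner product of $D^i(1)$ with every traceless matrix vanishes. Since the orthogonal complement (for the Frobenius product) of the space of traceless matrices is exactly the line of scalar matrices $\R\,\mathrm{Id}$, we conclude $D^i(1)=c_i\,\mathrm{Id}$ for some $c_i\in\R$. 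Combined with the time-constancy from Lemma \ref{lemma:conservation_forward_eq}, this gives $D^i(t)\equiv c_i\,\mathrm{Id}$ for all $t$. Reading off the off-diagonal entries, $\langle u_i^{(k)}(t), p_i^{u_\ell}(t)\rangle=0$ whenever $k\neq\ell$, which is precisely $p_i^{u_k}(t)\perp\mathrm{Span}(\{u_i^{(\ell)}(t)\}_{\ell\neq k})$.

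I expect the only genuine difficulty to lie in making the infinitesimal invariance argument rigorous rather than in any computation: one must correctly pin down $SL(d,\R)$ (and not the full $GL(d,\R)$) as the group preserving both orientation and $d$-volume, justify that $g$ is differentiable in $q$ so that the chain-rule computation of $\delta g$ is legitimate, and recognize the elementary duality identifying the annihilator of the traceless matrices with the scalar matrices. Everything else is a direct substitution of the PMP boundary condition together with the conservation law already in hand.
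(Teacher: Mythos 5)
Your proposal is correct and takes essentially the same route as the paper: invariance of $g$ under determinant-one reparametrizations of each frame at time $1$, the PMP boundary condition converting stationarity into the vanishing of the pairing of $D^i(1)$ with every traceless matrix, and Lemma \ref{lemma:conservation_forward_eq} to propagate the scalar form to all $t$. The only differences are cosmetic: the paper realizes the symmetry as the ambient group ${\rm SL}(U)\oplus {\rm GL}(U^{\perp})$ and tests against explicit traceless generators, whereas you use the right ${\rm SL}(d,\mathbb{R})$-action on frame coordinates and the Frobenius duality between traceless and scalar matrices (only replace the curve $\mathrm{Id}+sB$ by $\exp(sB)$ so that it genuinely lies in ${\rm SL}(d,\mathbb{R})$ before differentiating).
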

This result, which proof can be found in Appendix, is particularly interesting from a computational point of view as it allows to partly alleviate the redundancy introduced by the frame representation of Grassmannians. Indeed, we see that the costates $p(t)$ actually lie in affine subspaces of $\R^{Nn(d+1)}$ of lower dimensions $N(n + d(n-d)+1)$, which is precisely the dimension of the 'true' state space $(\R^n \times \widetilde{G}_d(\R^n)\times \R)^N$. 

\subsection{Registration algorithm}
\label{ssec:registration_algorithm}
Based on the optimality equations of the previous section, we can now easily design an algorithm to solve the discrete registration problem. As mentioned earlier, optimal trajectories are completely determined, through the Hamiltonian equations \eqref{eq:ham_eq_discrete} and \eqref{eq:optimal_discrete_v}, by the initial conditions $q(0)=q_0$, which is known, and $p(0)$. One of the standard class of methods in optimal control, known as \textit{shooting methods}, consist in directly optimizing the cost function over $p(0)$, which has been the approach of choice in many past works on shape registration such as \cite{Vialard2012b,Sommer2013,Charon2}. We adopt a similar strategy for our particular problem.

The main issue is to compute the gradient of the total energy $E$ with respect to the initial costate $p(0)$. The regularization term being equal to $H_r(p(0),q(0))$ thanks to the conservation of the reduced Hamiltonian, its gradient can be obtained by direct differentiation of \eqref{eq:reduced_Hamiltonian}. The fidelity term $g(q(1))$ on the other hand depends indirectly on the initial costate $p(0)$ via the integration of the forward reduced Hamiltonian equations. As standard for this type of optimal control problems, c.f. \cite{Vialard2012b} or \cite{arguillere14:_shape}, the gradient of $g(q(1))$ with respect to $p(0)$ can be computed by flowing backward in time the adjoint Hamiltonian system 
\begin{equation}
\label{eq:adjoint_Hamiltonian}
    \overset{\bold{\ldotp}}{Z}(t) = -dF(q(t),p(t))^{T} Z(t) 
\end{equation}
where $F(q,p)=(\partial_p H_r(p,q), -\partial_q H_r(p,q))$, $Z(t)=(\tilde{q}(t),\tilde{p}(t)) \in \mathbb{R}^n \times (\mathbb{R}^{n})^d$ the adjoint variables of the system, together with the end-time conditions $\tilde{q}(1) = -\partial_q g(q)|_{q=q(1)}$ and $\tilde{p}(1)=0$. Although being a linear system of ODEs, the adjoint equations can be tedious to derive and implement, in particular given the rather intricate expression of the reduced Hamiltonian function considered here. Instead, the differential appearing on the right hand side of \eqref{eq:adjoint_Hamiltonian} can be approximated efficiently based on the finite difference trick proposed in \cite{arguillere14:_shape} (Section 4.1). Indeed, it can be rewritten as follows:
\begin{equation*}
    dF(q,p)^{T} Z = \begin{pmatrix} 
    \partial_p(\partial_q H_r) \cdot \tilde{q} - \partial_q (\partial_q H_r) \cdot \tilde{p}\\
    \partial_p(\partial_p H_r)\cdot \tilde{q} - \partial_q (\partial_p H_r) \cdot \tilde{p}
    \end{pmatrix}
\end{equation*}
which only involves directional derivatives of the components of the function $F$ in the directions of $\tilde{q}$ and $\tilde{p}$. We then specifically approximate the above by centered finite difference 
\begin{equation*}
    dF(q,p)^{T} Z \approx \begin{pmatrix} \alpha \\ -\beta \end{pmatrix}, \ \ \text{with } \begin{pmatrix} \alpha \\ \beta \end{pmatrix} = \frac{F(q-\epsilon \tilde{p},p+\epsilon \tilde{q})-F(q+\epsilon \tilde{p},p-\epsilon \tilde{q})}{2\epsilon}
\end{equation*}
for some small $\epsilon>0$, which only requires at each time $t$ two evaluations of the same function $F$ that appears in the forward reduced Hamiltonian equations.   

With the above approach to compute the gradient with respect to $p(0)$, the registration algorithm then consists of essentially the same steps as the aforementioned works:   
\begin{algorithmic}[1]
\Repeat
\State From $(q(0),p(0))$ compute $(q(t),p(t))$ by forward integration of the reduced Hamiltonian system given by \eqref{eq:ham_eq_discrete} and \eqref{eq:optimal_discrete_v}.
\State Compute $g(q(1))$ and $-\partial_q g(q)|_{q=q(1)}$.
\State Integrate backward the adjoint Hamiltonian equations \eqref{eq:adjoint_Hamiltonian} to obtain $\partial_{p(0)} g(q(1))$.
\State Deduce the gradient of the full cost function with respect to $p(0)$.
\State Update $p(0)$.
\Until{convergence}
\end{algorithmic}
For the numerical ODE integration steps of lines 2 and 4, we use a standard RK4 scheme with regular time samples in $[0,1]$, where we typically take $T=15$ time steps in most of the experiments that we present in the next section. Note that one can easily replace the RK4 scheme by even higher order or adaptive step methods although in practice we have found this to be unnecessary for the types of ODEs involved here. The optimization update in line 6 follows the limited memory BFGS algorithm, specifically the implementation provided by the HANSO library \cite{hanso}. One can further take additional advantage of the dimensionality reduction provided by Proposition \ref{prop:invariance_momentum} by restricting each of the components $p_i^{u_k}(0)$ to the linear subspace $\text{Span}(\{u_i^{(\ell)}(0)\}_{\ell\neq k})^{\perp}$. Lastly, as in Section \ref{ssec:frame_compression}, all kernel summation and differentiation operations appearing in both the varifold fidelity terms and Hamiltonian equations are coded in CUDA using the KeOps library \cite{libkp}. The full implementation of the varifold approximation and diffeomorphic registration approach is available at \url{https://github.com/charoncode/Var_LDDMM} together with the scripts and data of some of the simulations presented in the next section.

\section{Results}
\label{sec:results}
We now present some results of the previous algorithms on discrete varifolds of dimension $d=1$ and $d=2$. In all these experiments, we choose the deformation kernel $K$ of $V$ to be a diagonal Gaussian kernel $K(x,y)=\exp(-\frac{|x-y|^2}{\sigma_V^2}) Id$. The kernel function $\rho$ is a Gaussian of scale $\sigma_{\rho}$. The choice of these scales is adapted to the sizes of the shapes in each of the experiment. We will not discuss these questions more in detail here, since this is not our main topic and it has been more thoroughly analyzed in previous works such as \cite{Vialard2012,Charon2017,hsieh2019diffeomorphic}. The function $\gamma$ is chosen, depending on the situation, in the different classes of functions discussed in detail in \cite{Charon2017}, the main distinction being whether the considered varifolds are rectifiable or not according to the conditions given by Theorem \ref{thm:dist_rectifiable_var} and Theorem \ref{thm:dist_general_var} and whether the shapes carry a relevant orientation or not. In particular, one can use $\gamma(t) = t^2$ to recover an orientation invariant fidelity metric, or $\gamma(t) = e^{-\frac{2}{\sigma_{\gamma}^2}(1-t)}$ which leads to an orientation sensitive distance that satisfy the conditions of Theorem \ref{thm:dist_general_var}. All simulations are run on a desktop computer equipped with a NVIDIA Quadro P5000 graphics card.

\begin{figure}[h]
    \begin{center}
    \hspace*{-1cm}
    \begin{tabular}{cccccc}
    \rotatebox{90}{varifold deform}
    &\includegraphics[trim = 18mm 18mm 18mm 18mm ,clip,width=3.1cm]{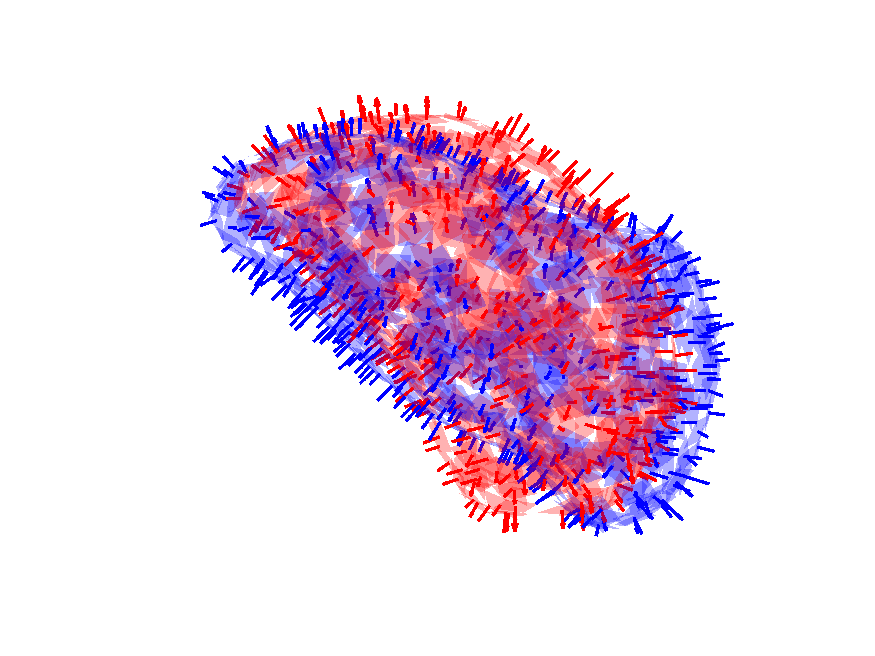}
    &\includegraphics[trim = 18mm 18mm 18mm 18mm ,clip,width=3.1cm]{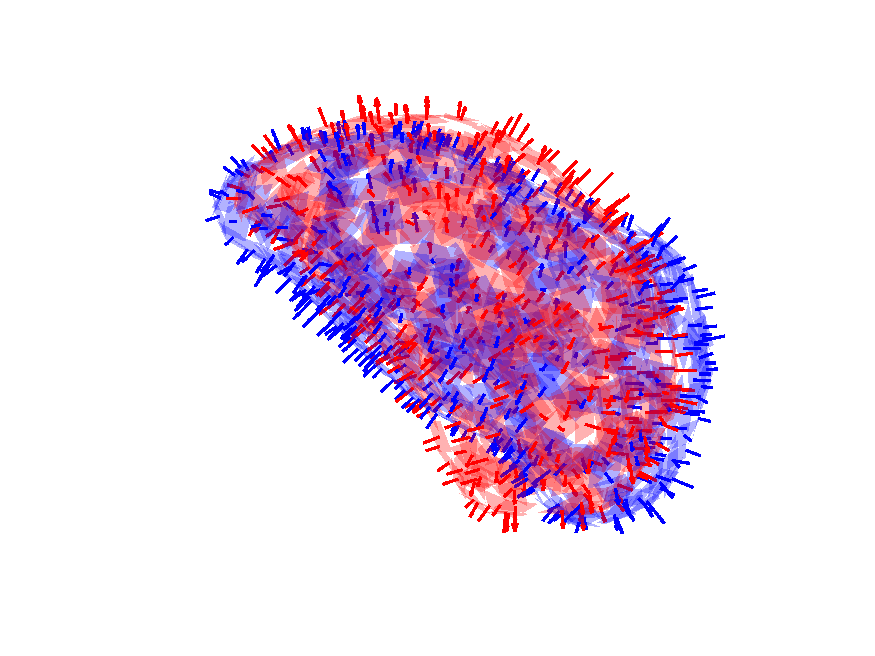} 
    &\includegraphics[trim = 18mm 18mm 18mm 18mm,clip,width=3.1cm]{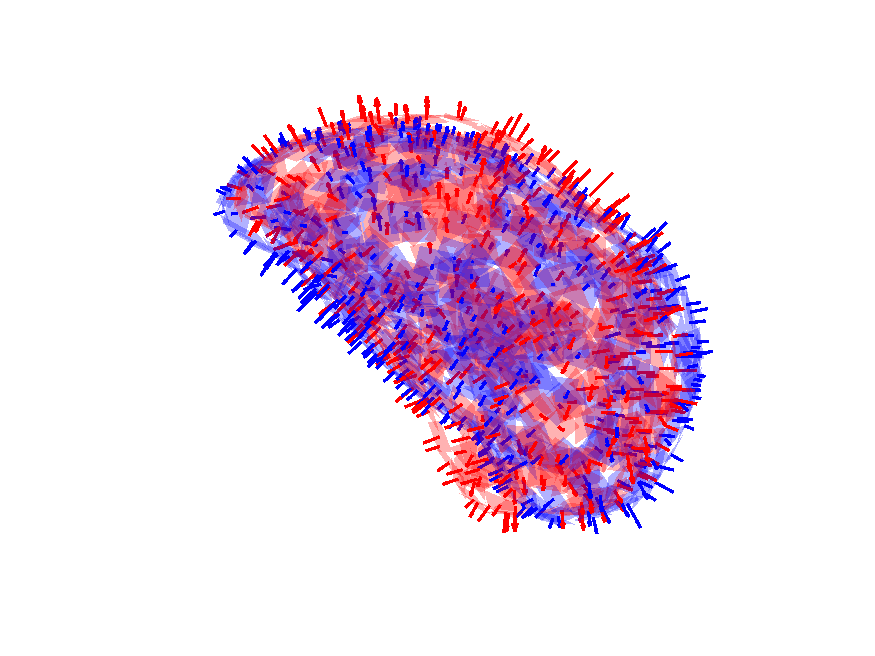}
    &\includegraphics[trim = 18mm 18mm 18mm 18mm ,clip,width=3.1cm]{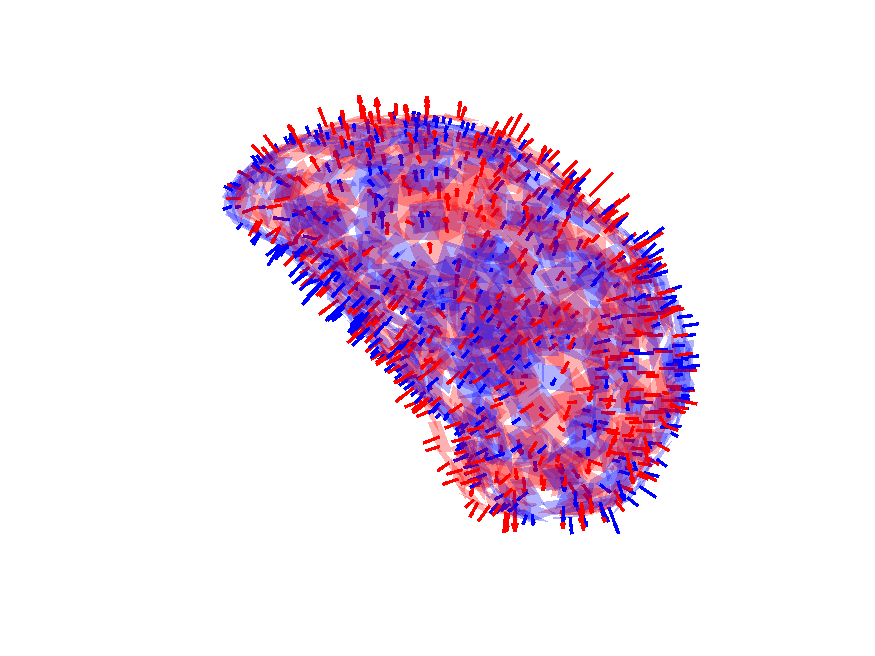} 
    &\includegraphics[trim = 18mm 18mm 18mm 18mm ,clip,width=3.1cm]{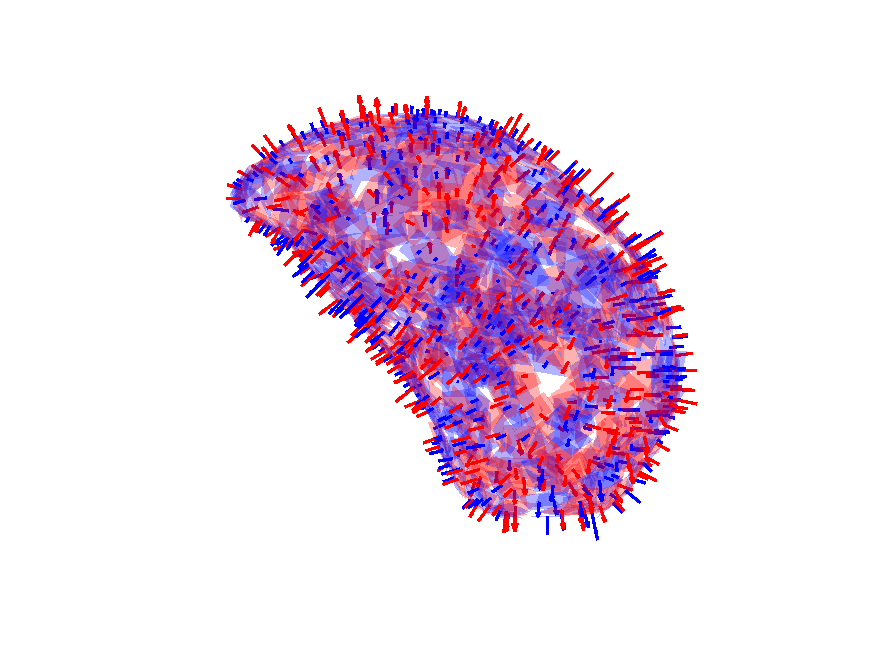}\\
    
    \rotatebox{90}{var-LDDMM}
    &\includegraphics[trim = 18mm 18mm 18mm 18mm ,clip,width=3.1cm]{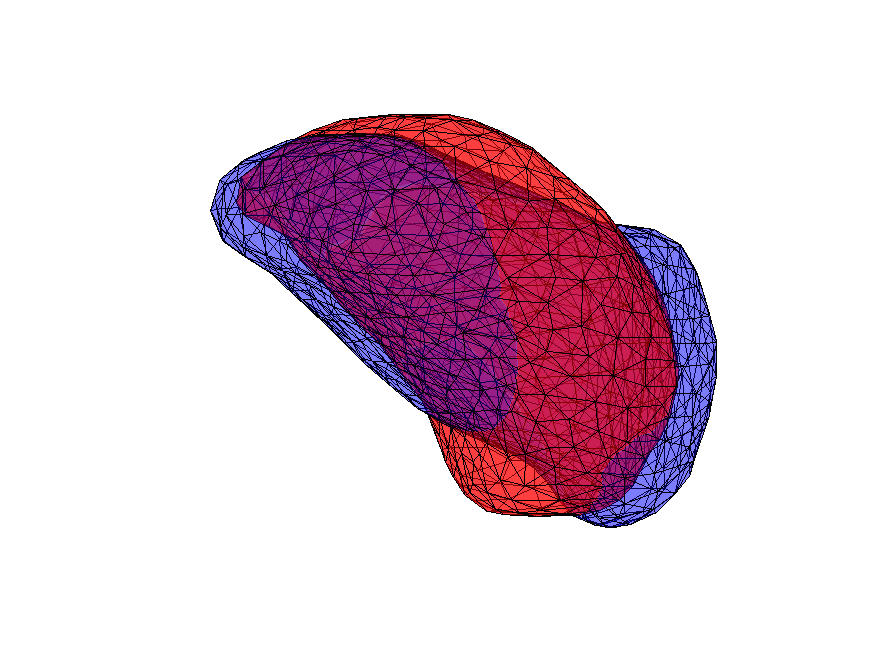}
    &\includegraphics[trim = 18mm 18mm 18mm 18mm ,clip,width=3.1cm]{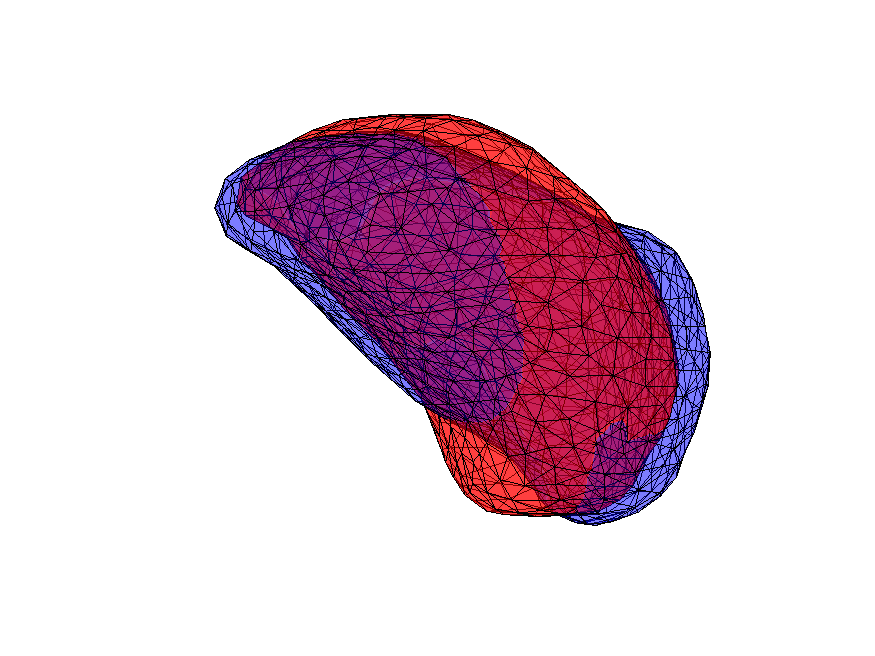} 
    &\includegraphics[trim = 18mm 18mm 18mm 18mm,clip,width=3.1cm]{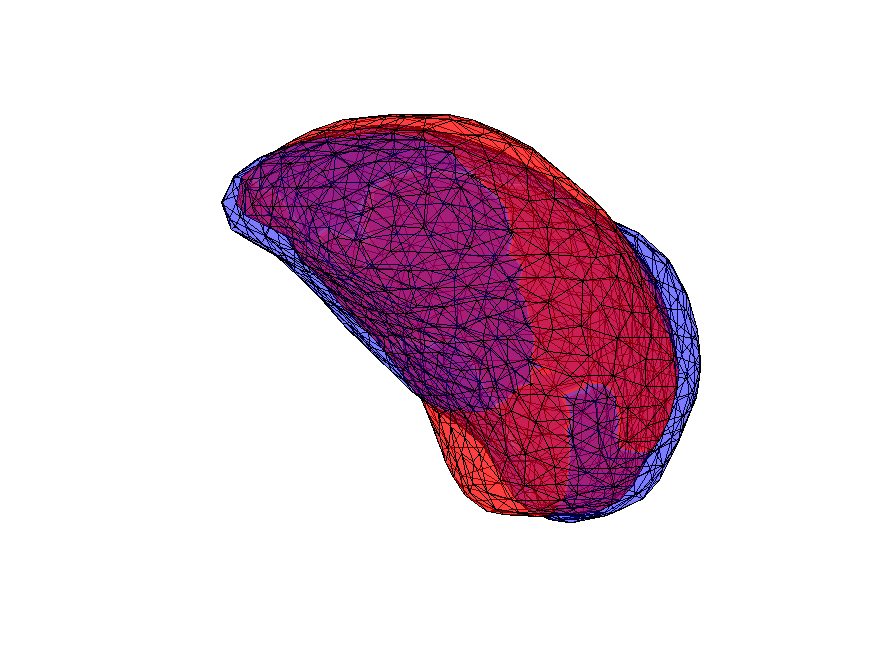}
    &\includegraphics[trim = 18mm 18mm 18mm 18mm ,clip,width=3.1cm]{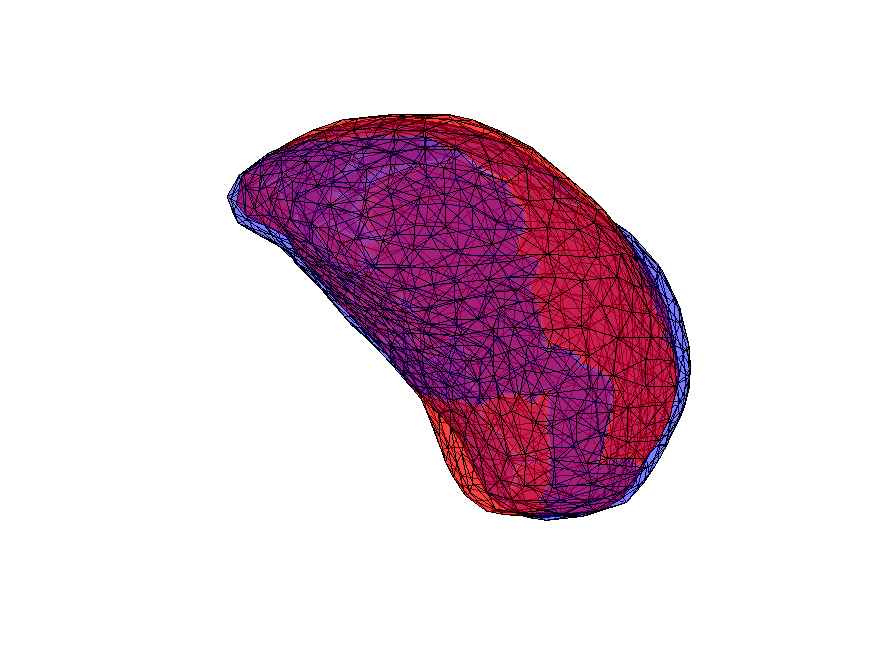} 
    &\includegraphics[trim = 18mm 18mm 18mm 18mm ,clip,width=3.1cm]{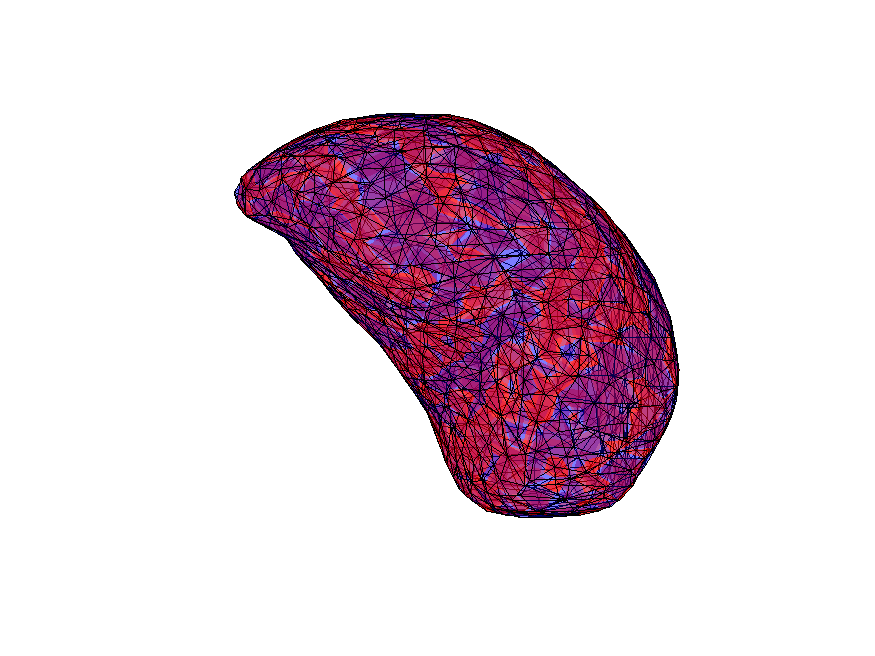}\\
    
    \rotatebox{90}{mesh-LDDMM}
    &\includegraphics[trim = 18mm 18mm 18mm 18mm ,clip,width=3.1cm]{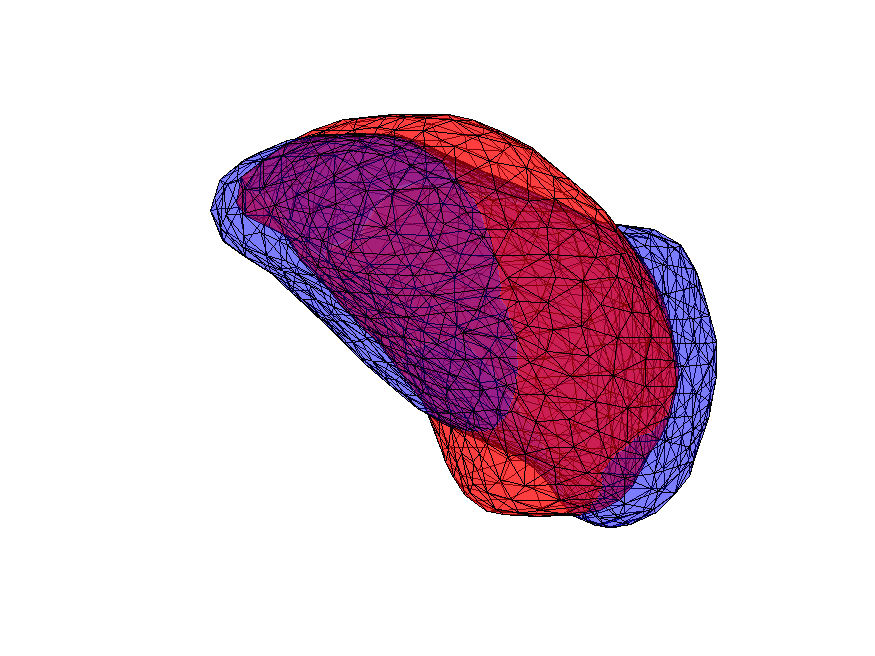}
    &\includegraphics[trim = 18mm 18mm 18mm 18mm ,clip,width=3.1cm]{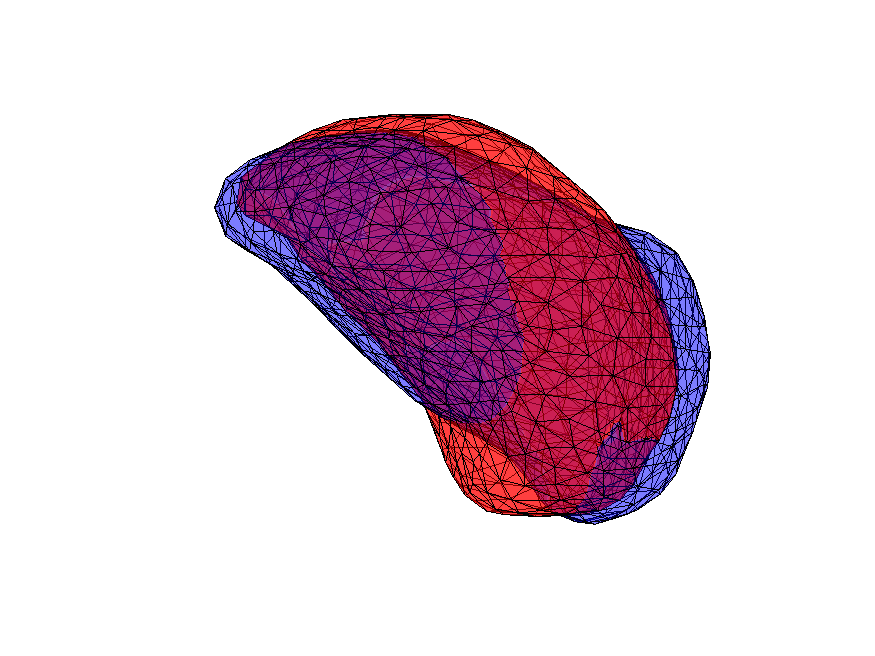} 
    &\includegraphics[trim = 18mm 18mm 18mm 18mm,clip,width=3.1cm]{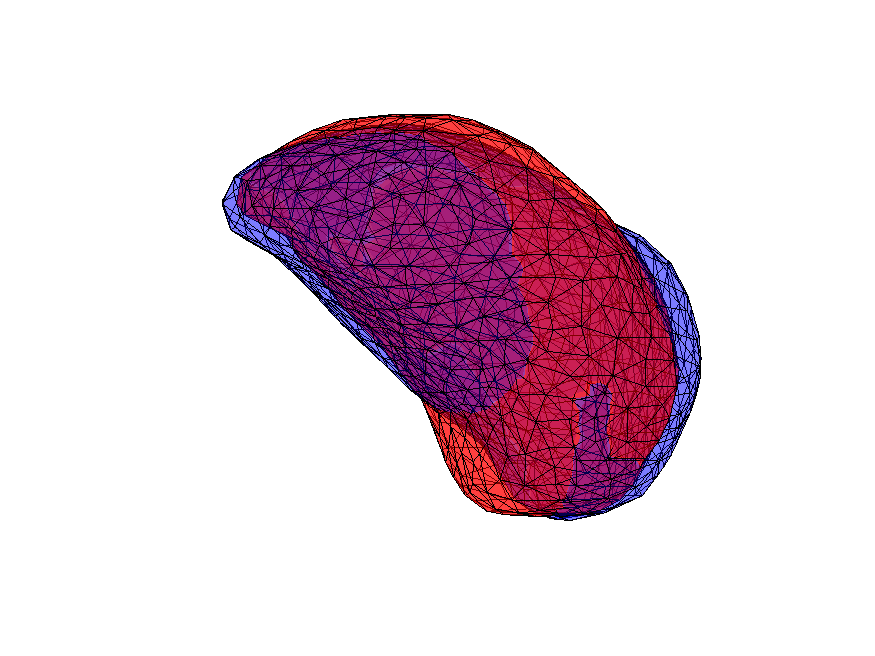}
    &\includegraphics[trim = 18mm 18mm 18mm 18mm ,clip,width=3.1cm]{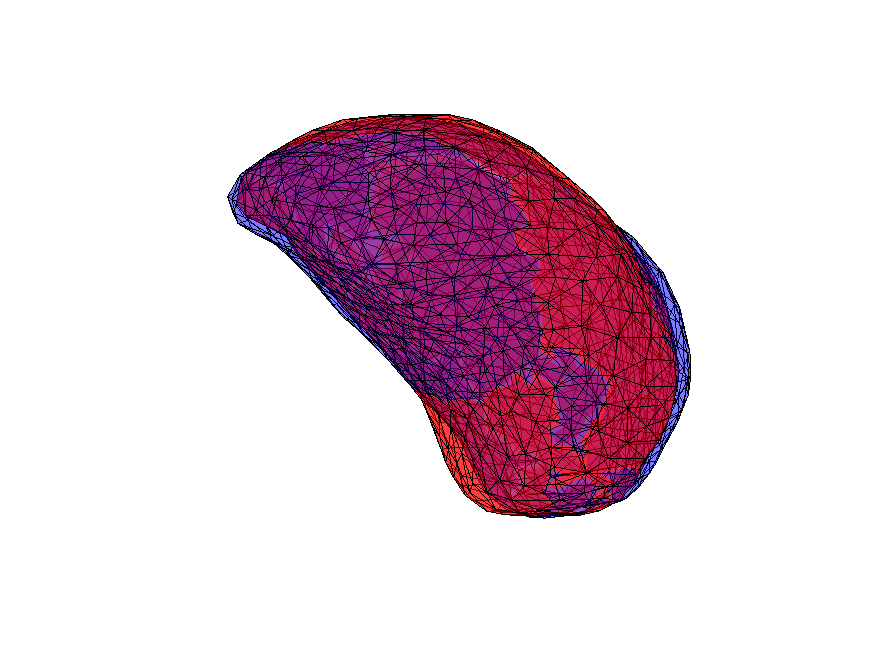} 
    &\includegraphics[trim = 18mm 18mm 18mm 18mm ,clip,width=3.1cm]{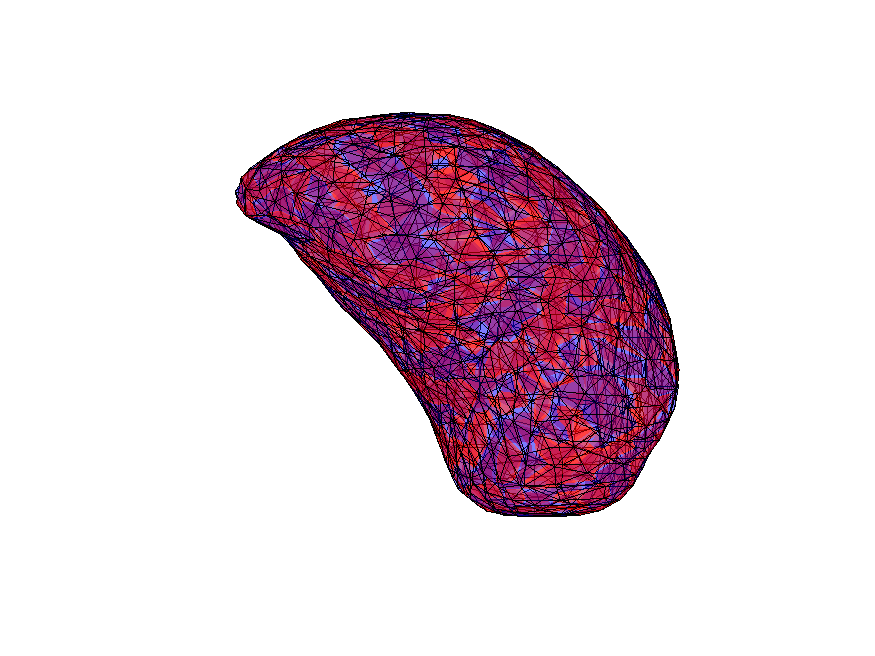}\\
    &$t=0$ & $t=1/5$ & $t=7/15$  & $t= 4/5$ & $t = 1$
    \end{tabular}
    \caption{Surface registration of two amygdalas (data courtesy of S. Ardekani) using discrete varifold LDDMM (1st and 2nd row) and surface mesh LDDMM (3rd row). The first row depicts the evolution of the deformed tangent spaces along the geodesic. The parameters used are the same for both methods; namely a weighting constant $\lambda=10$ between the regularization and fidelity term, a deformation scale $\sigma_V = 4.75$, a scale $\sigma_\rho = 3$ for the spatial kernel of the fidelity term and a Gaussian function on the sphere of scale $\sigma_{\gamma} = 1$ for the function $\gamma$.}
    \label{fig:Amygdala}
    \end{center}
\end{figure}

\subsection{Diffeomorphic registration}
\begin{figure*}[h]
    \begin{center}
    \hspace*{-1cm}
    \begin{tabular}{cccc}
    &\includegraphics[trim = 25mm 25mm 25mm 25mm ,clip,width=4.2cm]{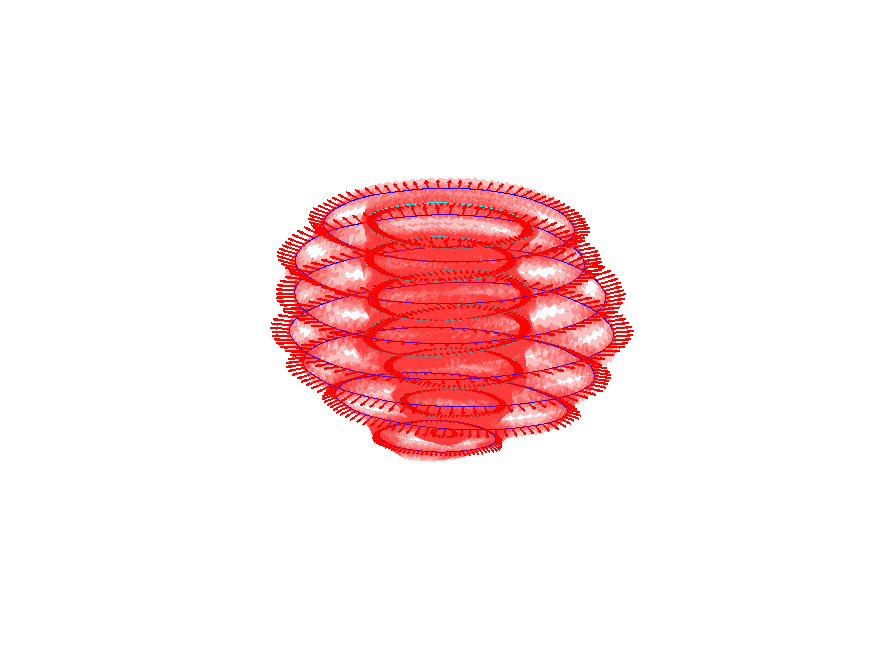} &
    &\includegraphics[trim = 25mm 25mm 25mm 25mm ,clip,width=4.2cm]{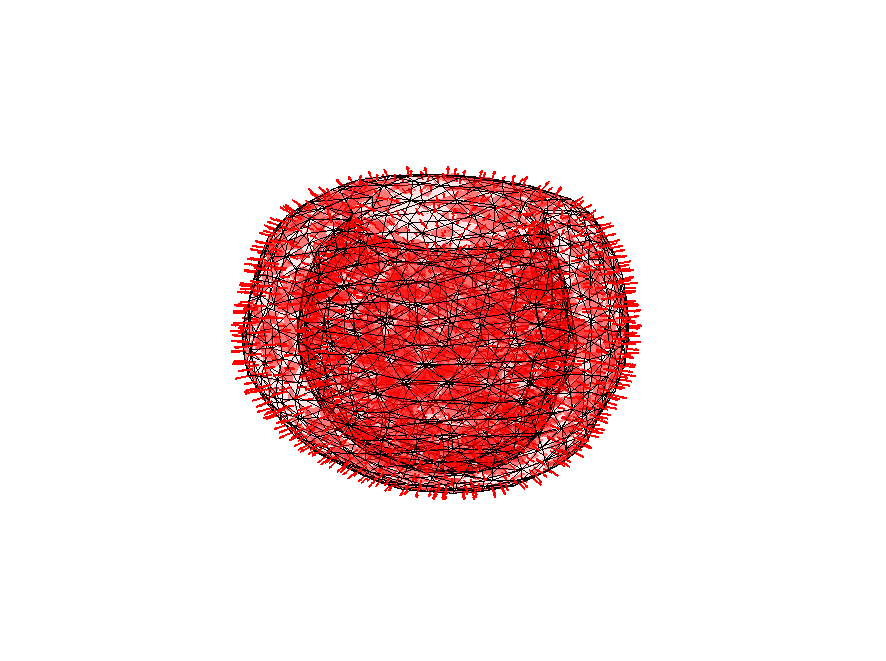} \\
    &2-varifold associated to curve set &  & 2-varifold associated to mesh surface
    \end{tabular}    
    \vskip3ex
    \hspace*{-1.5cm}
    \begin{tabular}{ccccc}
    \rotatebox{90}{curves to surf}
    &\includegraphics[trim = 25mm 25mm 25mm 25mm ,clip,width=4.1cm]{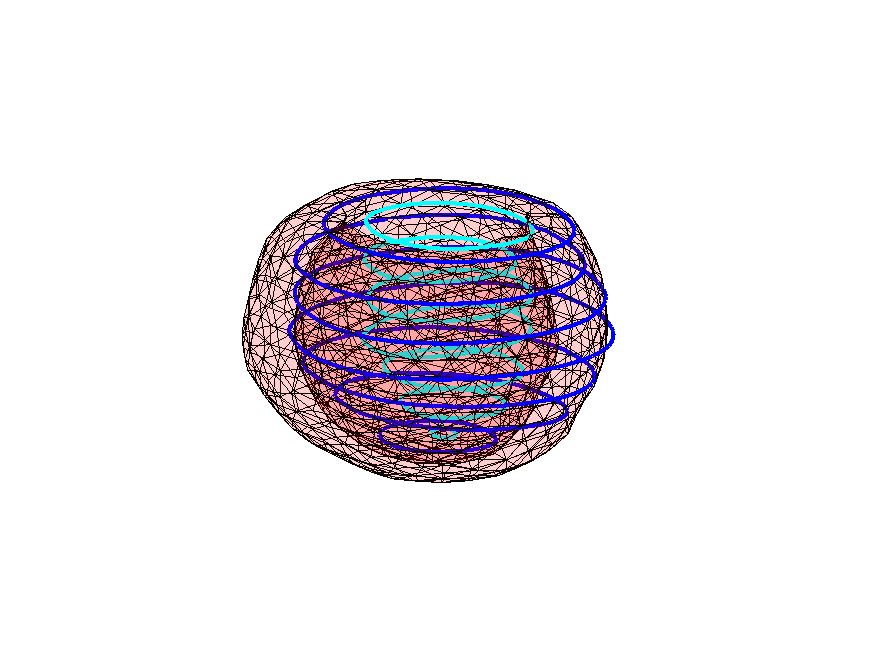}
    &\includegraphics[trim = 25mm 25mm 25mm 25mm ,clip,width=4.1cm]{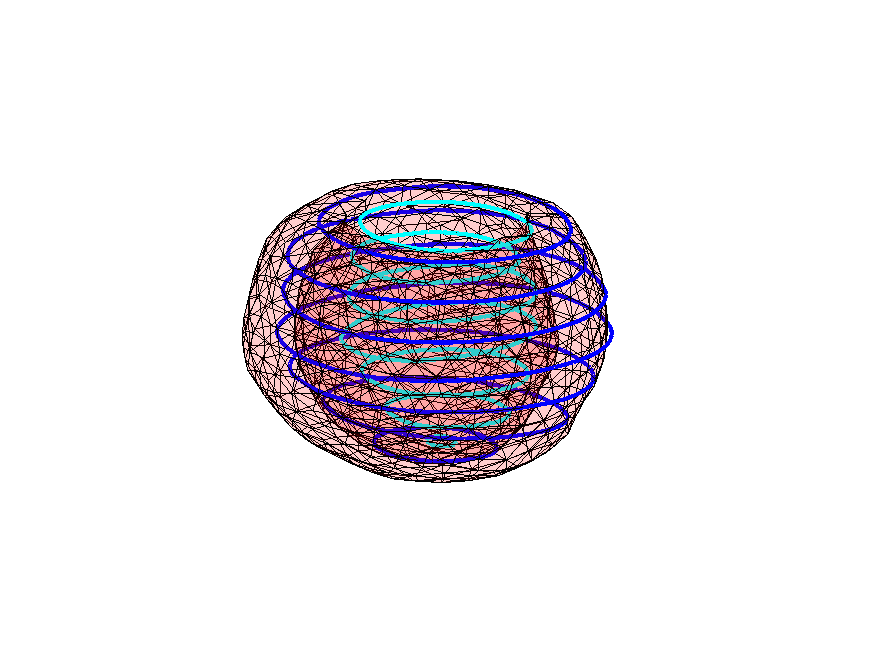} 
    &\includegraphics[trim = 25mm 25mm 25mm 25mm ,clip,width=4.1cm]{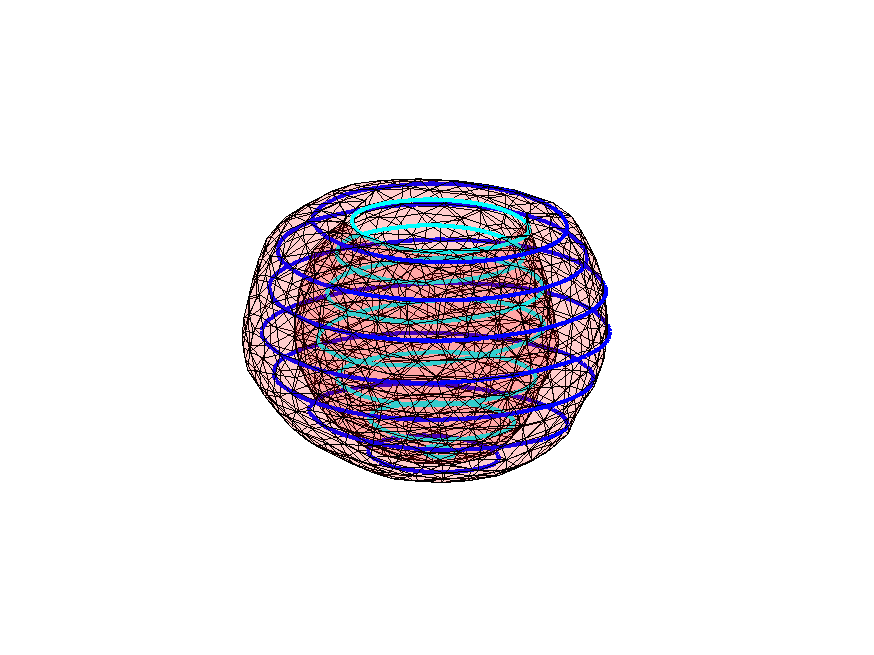}
    &\includegraphics[trim = 25mm 25mm 25mm 25mm ,clip,width=4.1cm]{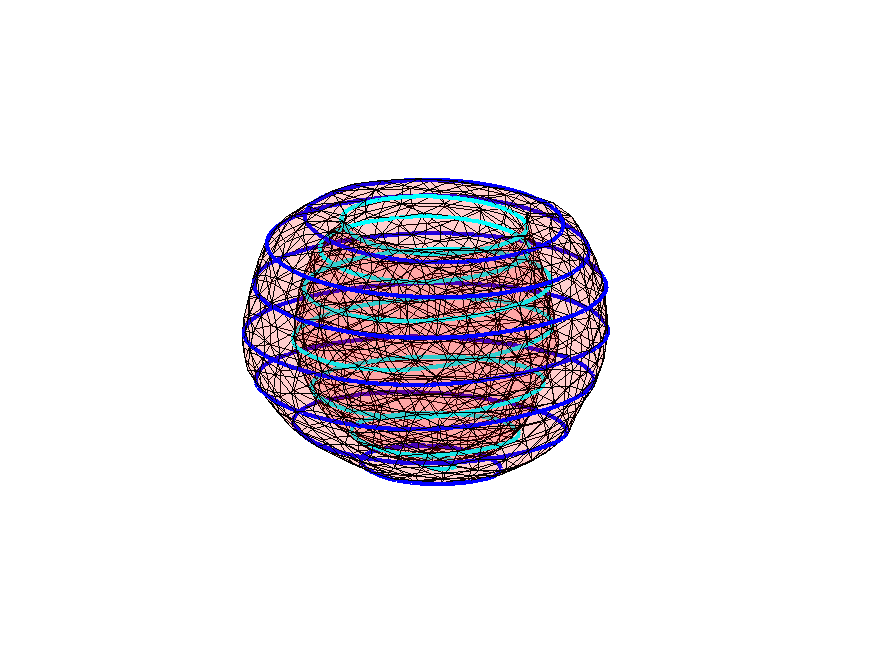} \\
    
    \rotatebox{90}{surf to curves}
    &\includegraphics[trim = 25mm 25mm 25mm 25mm ,clip,width=4.1cm]{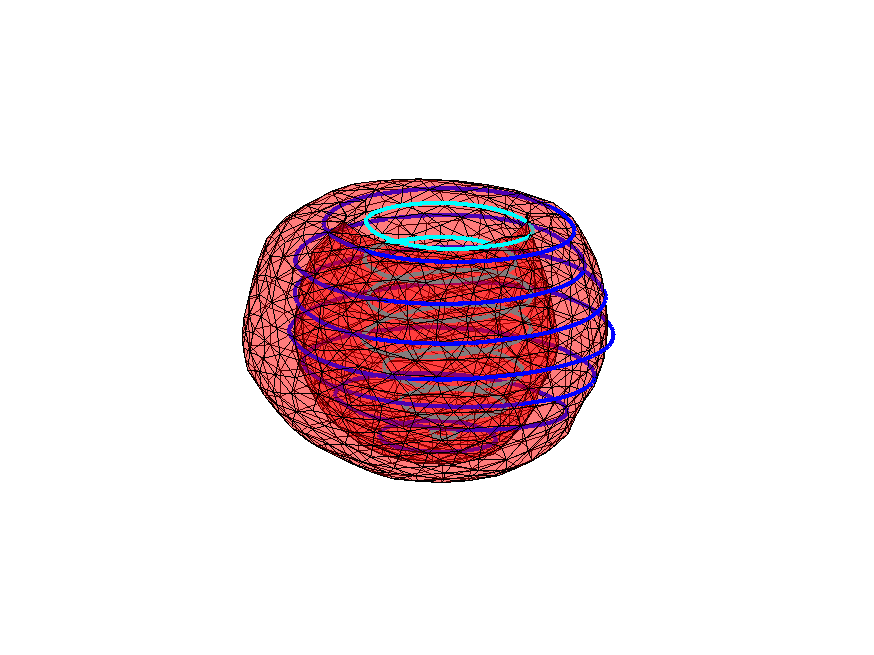}
    &\includegraphics[trim = 25mm 25mm 25mm 25mm ,clip,width=4.1cm]{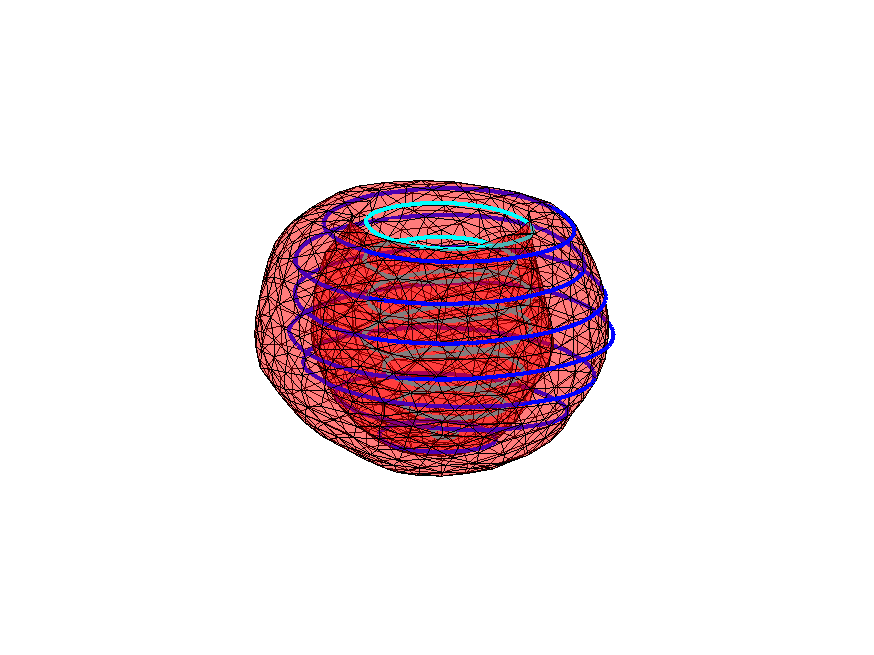} 
    &\includegraphics[trim = 25mm 25mm 25mm 25mm ,clip,width=4.1cm]{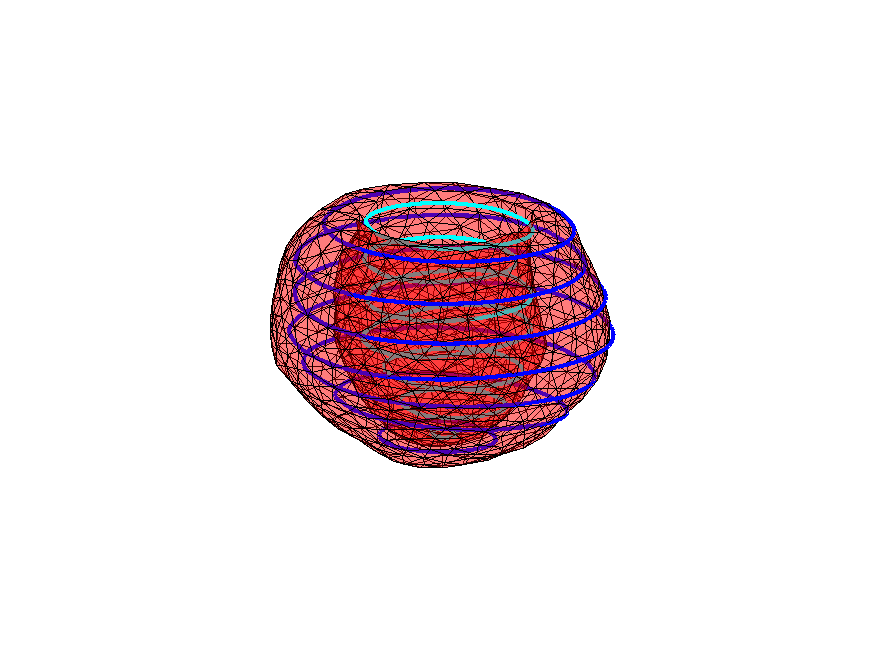}
    &\includegraphics[trim = 25mm 25mm 25mm 25mm ,clip,width=4.1cm]{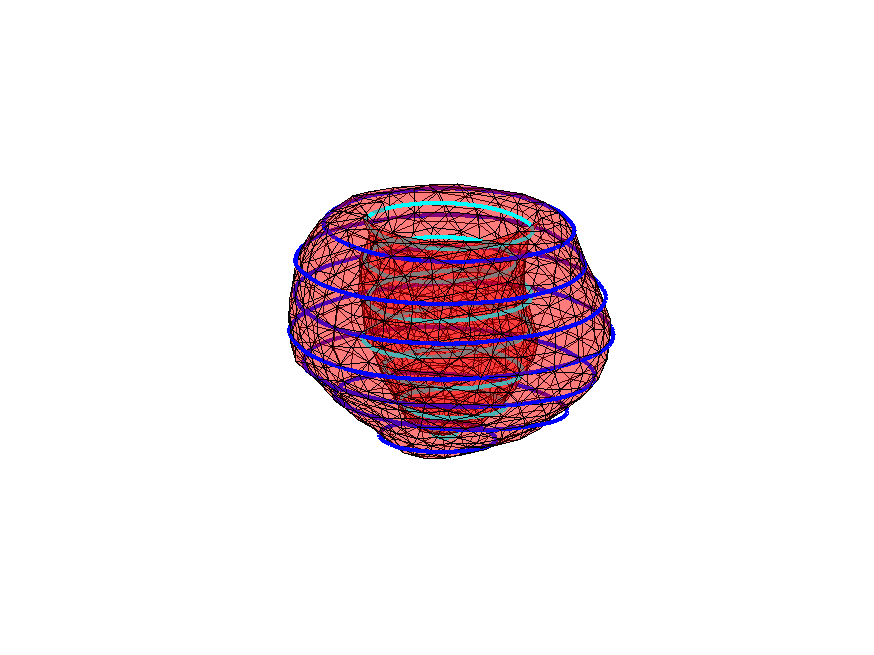}\\
    &$t=0$ & $t=0.3$ & $t=0.6$  & $t= 1$ 
    \end{tabular}
    \caption{Registration between two shapes of hearts of different nature. On top row: illustration of the 2-varifolds associated to the sectional contour curves (left) for a first subject and to a triangulated surface (right) for the second subject. In the second and third rows are shown the two results of varifold registration of surface to contour curves and contour curves to surface respectively.}
    \label{fig:heart_curves_surf}
    \end{center}
\end{figure*}

We start with results of registration obtained from the algorithm of Section \ref{ssec:registration_algorithm}. In this section, we will mostly focus on examples involving 2-varifolds, the reader may refer to \cite{hsieh2019diffeomorphic} for additional examples in the case $d=1$. First, as a sanity check, we compare our 2-varifold registration approach applied to triangulated surfaces with the previous LDDMM mesh surface matching implementation of \cite{Charon2,Charon2017} using the same kernel size parameters, in which case we expect both approaches to be theoretically equivalent as pointed out in the last paragraph of Section \ref{ssec:diffeom_reg}. Shown in Fig. \ref{fig:Amygdala} are triangulated surfaces of amygdala segmented from two different subjects of the BIOCARD database \cite{BIOCARD2015}, containing 563 vertices, 1122 triangles and 488 vertices, 972 triangles respectively. Following the simple procedure outlined at the beginning of Section \ref{ssec:discrete_approx}, we obtain discrete 2-varifolds (one Dirac for each triangle). The first row in the figure shows the optimal deformation estimated with our approach through the evolution of the discrete varifold of the source shape (red) to the target varifold (blue). Discrete varifolds are here displayed in the form of tangent patches and normal vectors (instead of 2-frames) for the purpose of better visualization. Now, the estimated vector fields $v_t$ define a path of dense deformations of the full space which we can also apply to deform the original triangulated surface, which we show on the second row of Fig. \ref{fig:Amygdala}. This is very comparable to the result of the surface mesh LDDMM registration approach displayed on the third row. In terms of computation times, the varifold registration takes a total of 494s (0.99s per iteration of BFGS) against 92.5s (0.18s per iterations) for the surface LDDMM algorithm. This difference comes from mainly two factors: the fact that the numerical complexities are quadratic in the number of Diracs (i.e. triangles) for varifold matching as opposed to the number of vertices for surface LDDMM, and from the increased dimensionality of the Hamiltonian systems in our model.

In Fig. \ref{fig:heart_curves_surf}, we consider a more challenging registration scenario which was originally studied in \cite{Ardekani2011}. Here, one of the two shape is a triangulated surface of a heart membrane segmented from high resolution CT imaging while the second one only consists of a sparse set of cross-sectional curves of the heart contour obtained from lower resolution clinical cardiac MRI data. The varifold framework of this paper leads to an alternative registration approach to the one proposed in \cite{Ardekani2011} that relies on a tailored closest point fidelity cost for the surface to curve set comparison. In our case, we instead represent both shapes as 2-varifolds and register them using the exact same varifold registration algorithm as in the previous example. The triangulated surface is again associated to a discrete 2-varifold in the same way as above. As for the set of cross-sectional curve set, we first obtain its 1-varifold representation $\{x_i,u_i^{(1)}\}$ which involve the tangent vectors $u_i^{(1)}$ to the curve that passes through $x_i$. We then complete it into a 2-varifold by adding a second "vertical" (i.e. inter-sectional) frame vector $u_i^{(2)}$, which can be estimated in this case by simply finding the projection of $x_i$ onto the corresponding curve in the section immediately above (note that this does involve any attempt to estimate an actual surface mesh of the data). We show the 2-varifolds associated to each shape in the first row of Fig. \ref{fig:heart_curves_surf} and as well as the result of the 2-varifold registration both from curve set to surface and surface to curve set. In each case, we have again applied the estimated deformation between varifolds on the original shapes for visualization.

\begin{figure*}
    \begin{center}
    \hspace*{-1cm}
    \begin{tabular}{ccc}
    \rotatebox{90}{\phantom{aaaa} noisy point clouds }
    &\includegraphics[trim = 15mm 15mm 15mm 15mm ,clip,width=5.6cm]{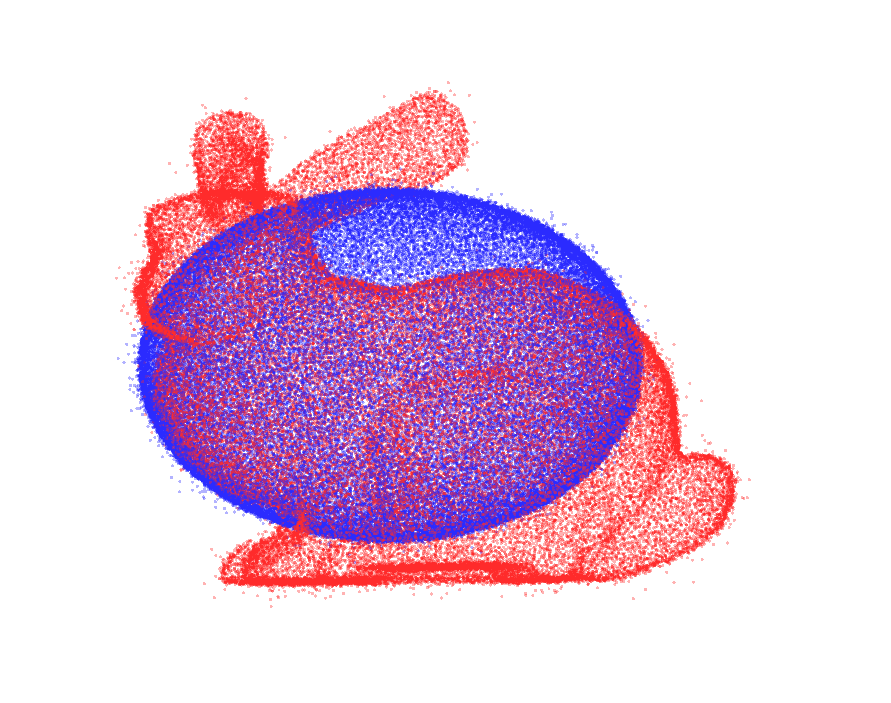}
    &\includegraphics[trim = 15mm 15mm 15mm 15mm ,clip,width=5.6cm]{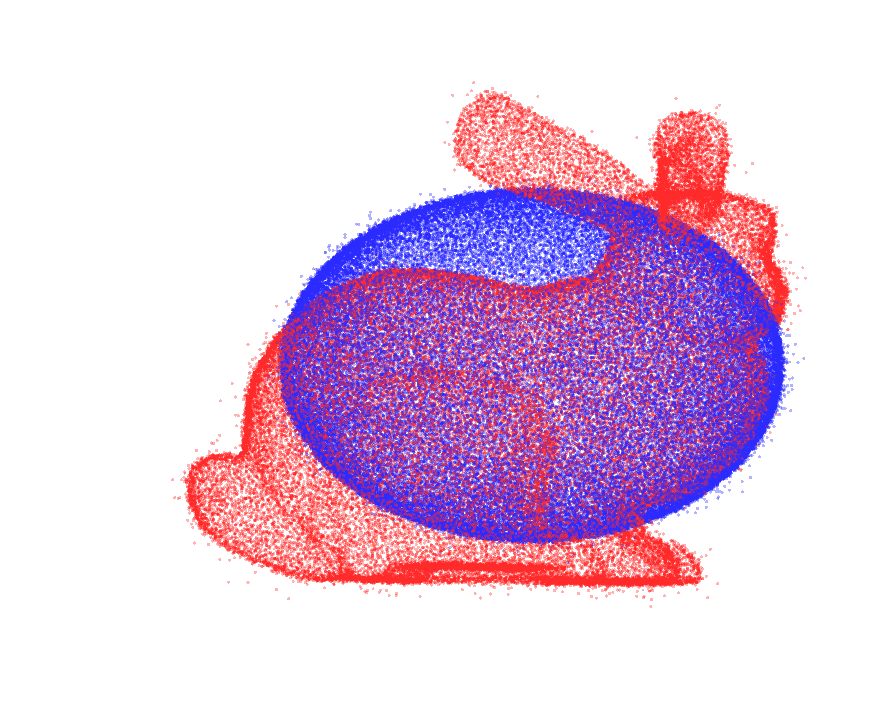} \\
    
    \rotatebox{90}{approx 2-varifold from GMRA}
    &\includegraphics[trim = 15mm 15mm 15mm 15mm,clip,width=5.6cm]{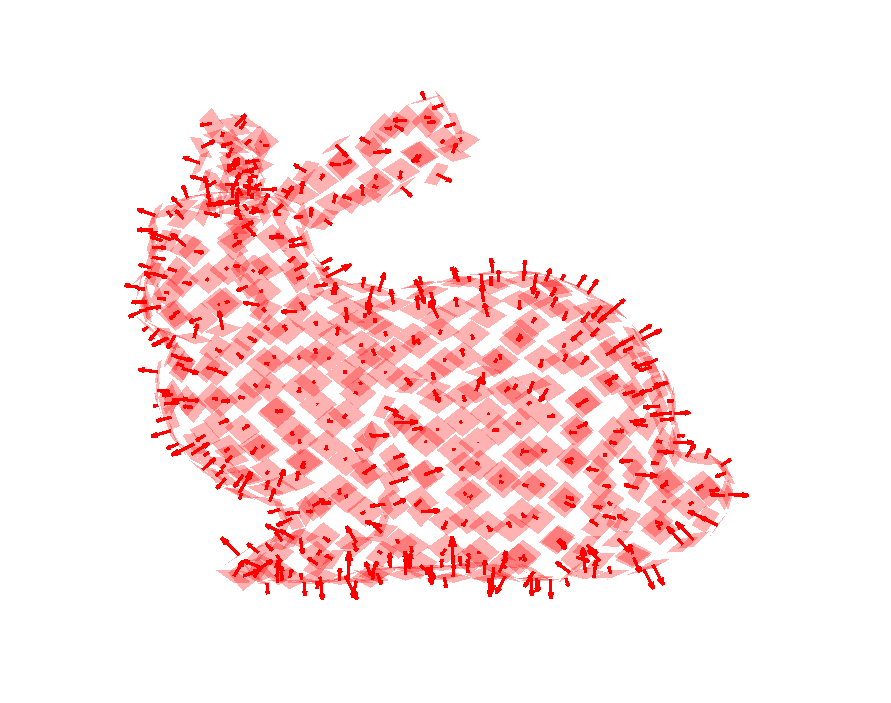}
    &\includegraphics[trim = 15mm 15mm 15mm 15mm ,clip,width=5.6cm]{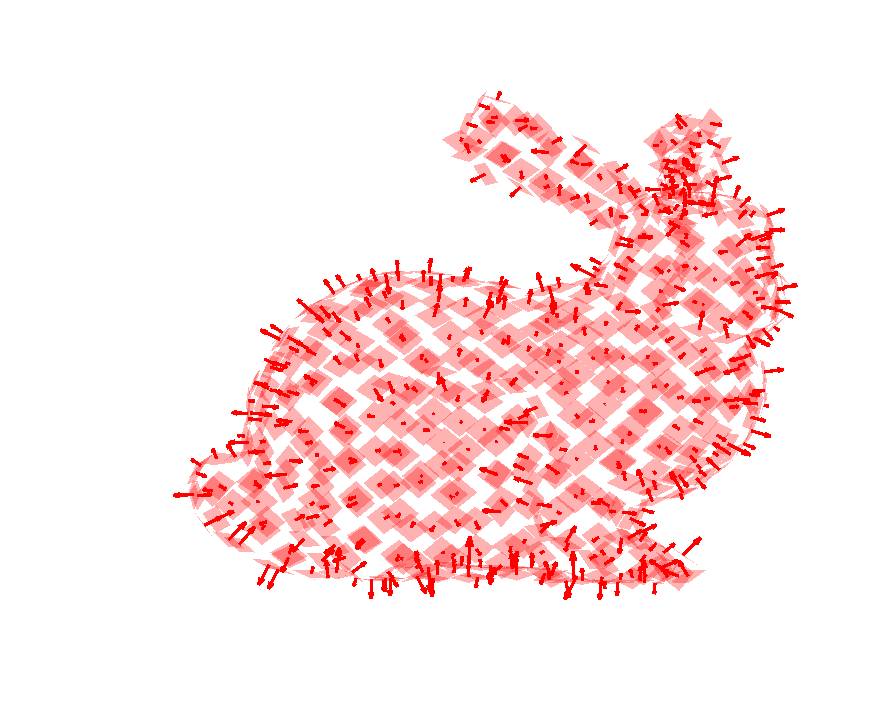} \\
    
    \rotatebox{90}{\phantom{aa} point cloud LDDMM}
    &\includegraphics[trim = 15mm 15mm 15mm 15mm,clip,width=5.6cm]{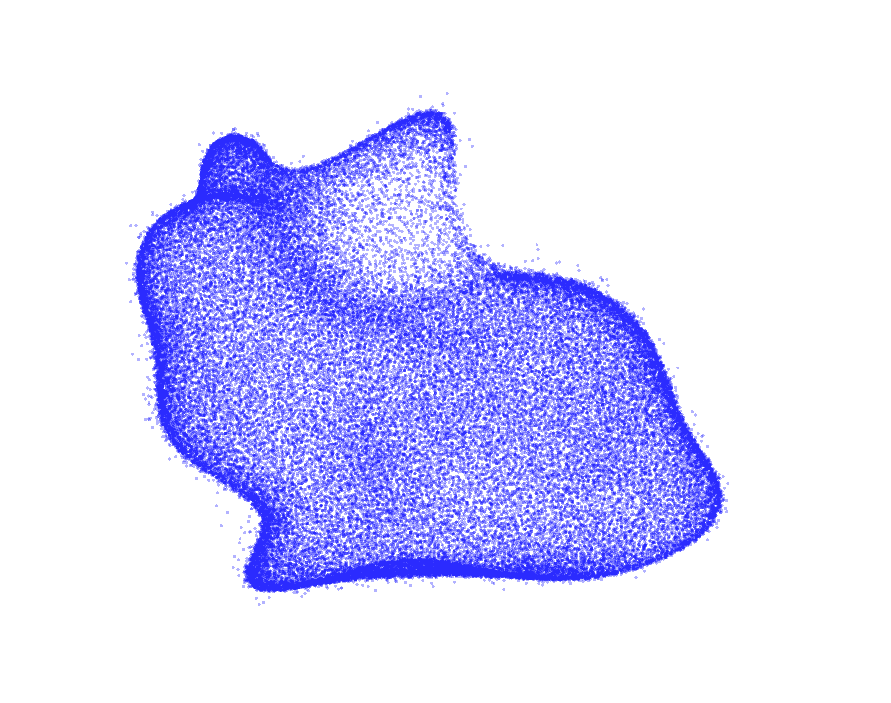}
    &\includegraphics[trim = 15mm 15mm 15mm 15mm ,clip,width=5.6cm]{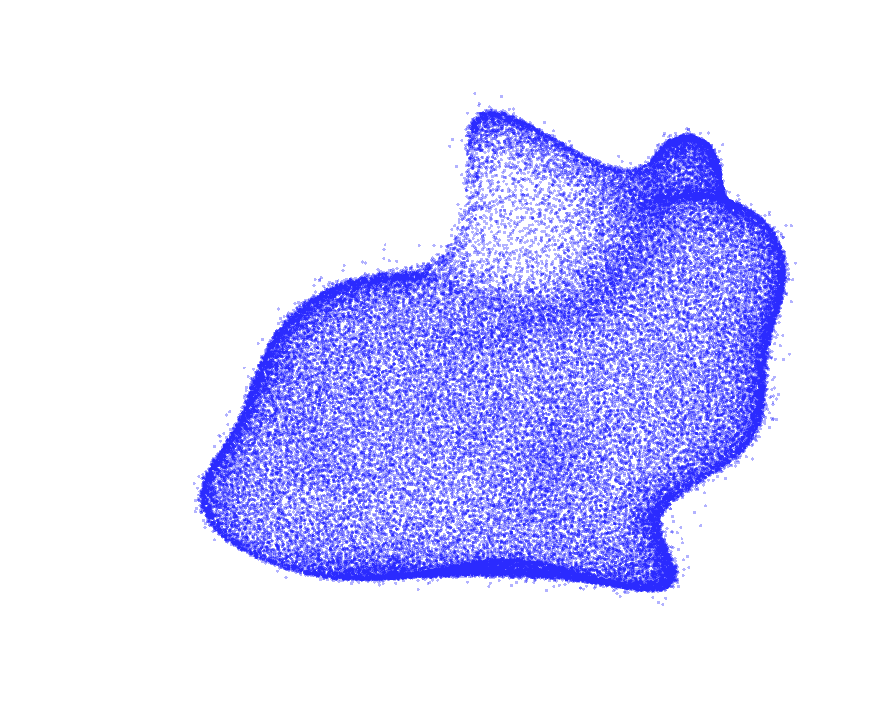} \\
    
    \rotatebox{90}{\phantom{aaaaa} varifold LDDMM}
    &\includegraphics[trim = 15mm 15mm 15mm 15mm,clip,width=5.6cm]{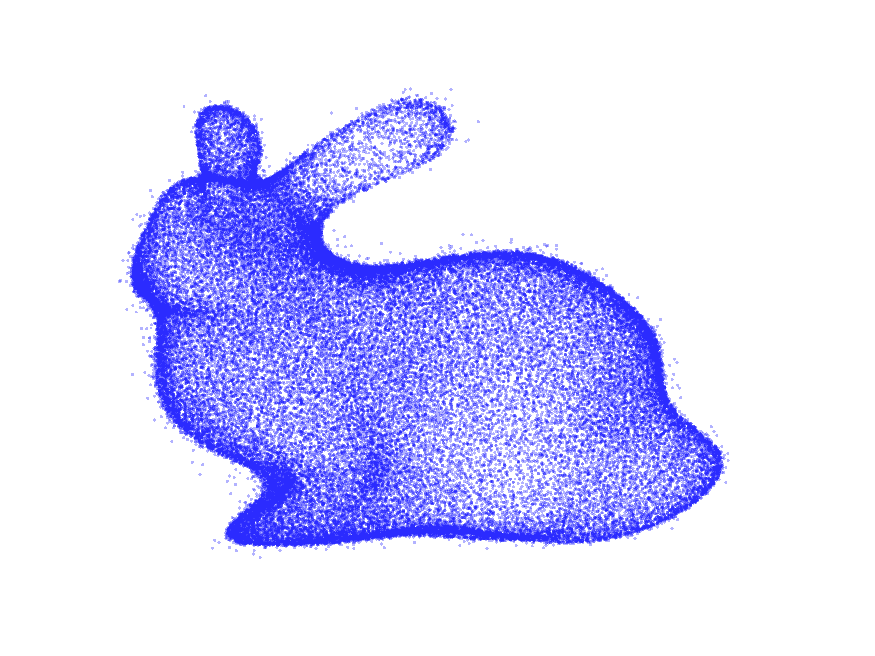}
    &\includegraphics[trim = 15mm 15mm 15mm 15mm ,clip,width=5.6cm]{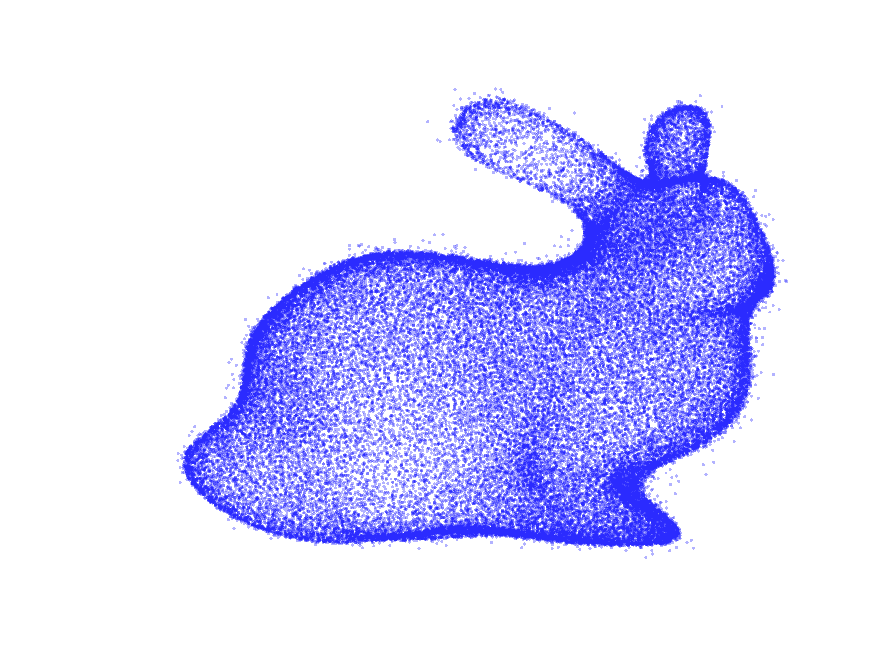} \\
    \end{tabular}
    \caption{Registration of noisy point clouds. Top row: the source (blue) and target (red) point clouds with respectively 58962 and 54834 points. Second row: illustration of the target 2-varifold obtained by GMRA with only 512 Diracs. Third row: result of direct registration of the raw point clouds. Bottom row: registration estimated from the approximate 2-varifolds.}
    \label{fig:pt cloud bunny}
    \end{center}
\end{figure*}

Along the same lines, we finally look into the case of even less structured data objects. Specifically, as displayed on the first row of Fig. \ref{fig:pt cloud bunny}, we consider two noisy point clouds which are obtained by first randomly selecting vertices from the groundtruth surfaces (with replacement) and then adding some Gaussian noises ($\sigma=0.028$) to the position of each sampled point. A first possible registration approach could be to treat such point clouds as standard measures of $\R^3$ (i.e. 0-varifolds) and follow the simple point distribution LDDMM algorithm for unlabelled point sets proposed in \cite{Glaunes2004}. The result shown on the third row of Fig. \ref{fig:pt cloud bunny} illustrates the shortcomings of such a model for this type of data. Indeed, one can see that, in the absence of any tangential information, many details of the target shape are not well-recovered. Furthermore, this point set model is not robust to sampling changes and imbalances which results in the mismatches observed below the ear region. An arguably more adequate method would be to exploit the fact that these point clouds are close to their underlying surfaces. However, due to noise and the presence of outliers, estimating triangulations of the point clouds with standard meshing algorithms can prove particularly challenging and inefficient. Instead, our approach consists in directly learning the 2-varifold structure from the point clouds based on the geometric multi-resolution analysis (GMRA) framework developed in \cite{allard2012multi}. Here, we fix a specific scale and GMRA then provides local partitions with estimates of tangent planes to the point clouds which eventually gives us an approximate representation as a 2-varifold illustrated on the second row of Fig. \ref{fig:pt cloud bunny}. Besides its robustness and numerical efficiency, such manifold learning algorithm is also particularly well suited for our proposed registration framework since it naturally leads to approximations in the form of 2-varifolds (and generally not meshes). In the last row of Fig. \ref{fig:pt cloud bunny}, we show the deformed point cloud resulting from the deformation estimated by the 2-varifold registration algorithm. It obviously outperforms the direct point cloud registration described above both in terms of quality of matching but also computation time (10 mins vs 39 mins in total).

\begin{figure*}[h]
\hspace*{-1cm}
     \begin{tabular}{cccc}
    \includegraphics[width=4.9cm]{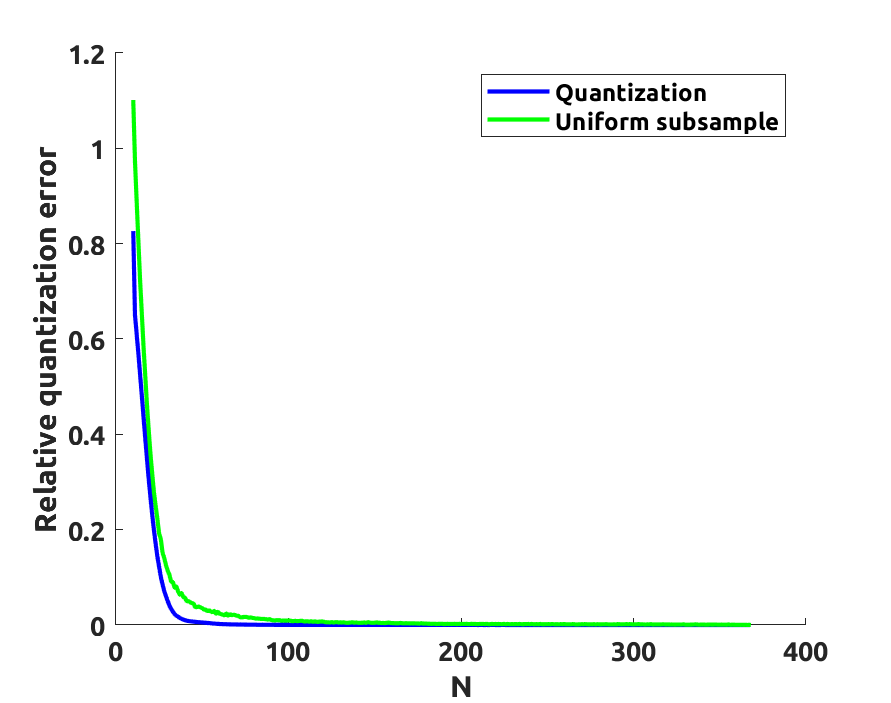}
    &\includegraphics[trim = 30mm 15mm 30mm 15mm ,clip,width=3.6cm]{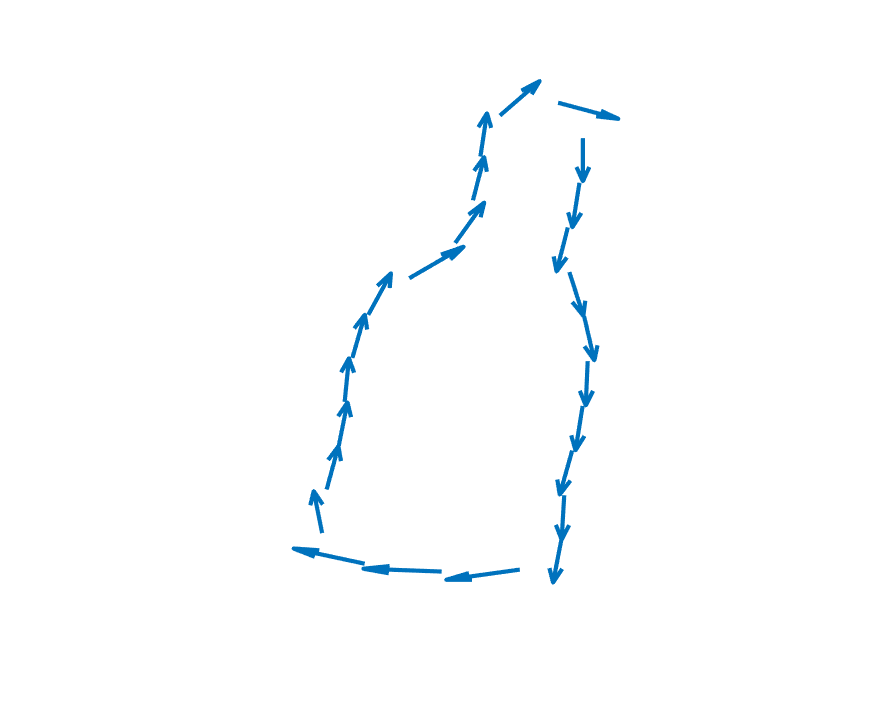}
    &\includegraphics[trim = 30mm 15mm 30mm 15mm ,clip,width=3.6cm]{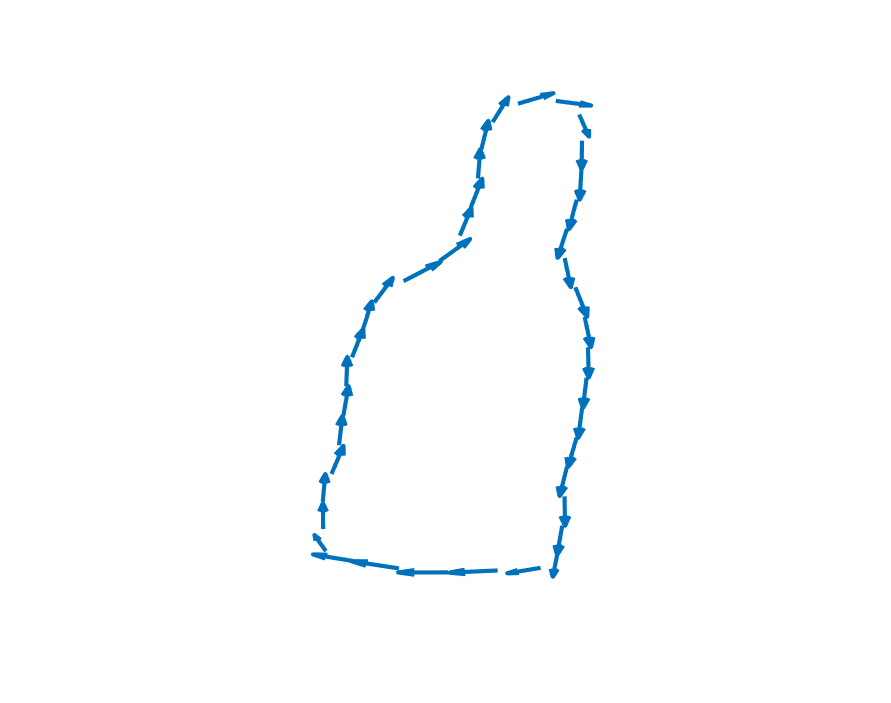}
    &\includegraphics[trim = 30mm 15mm 30mm 15mm ,clip,width=3.6cm]{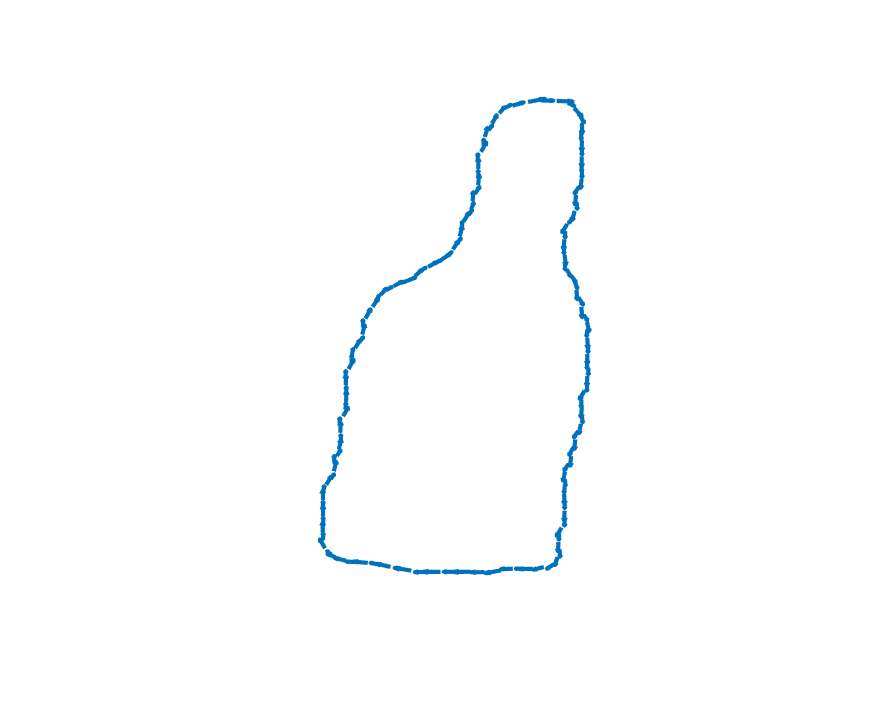} \\
    &$N=25$, rel err=$12.19\%$ &$N=40$, rel err=$1\%$ &$N=150$, rel err=$0.01\%$
    \end{tabular}
    \vskip3ex
    \hspace*{-1cm}
    \begin{tabular}{cccc}
    \includegraphics[width=4.9cm]{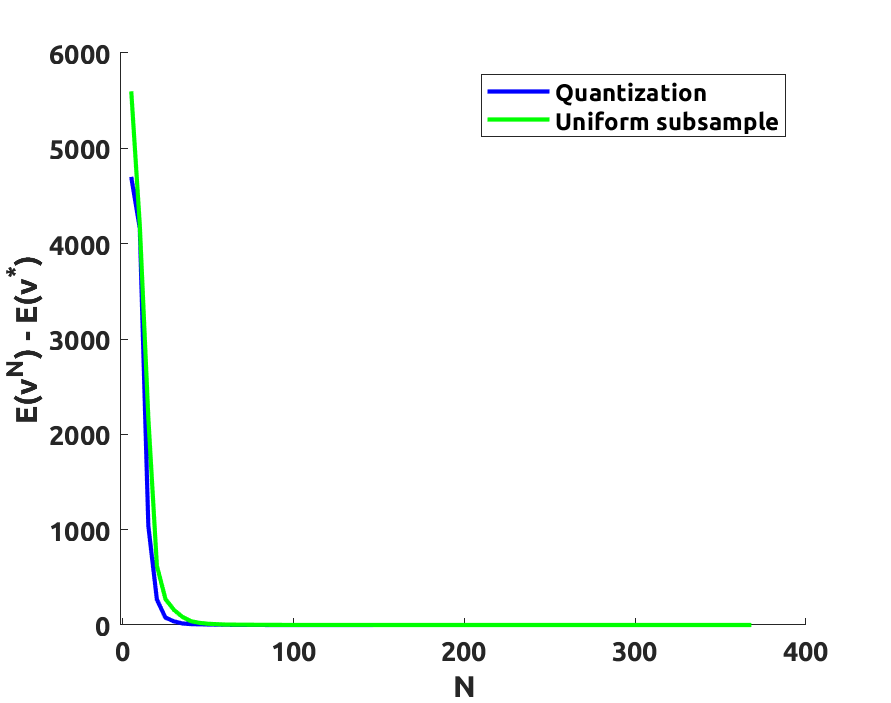}
    &\includegraphics[trim = 30mm 15mm 30mm 15mm ,clip,width=3.6cm]{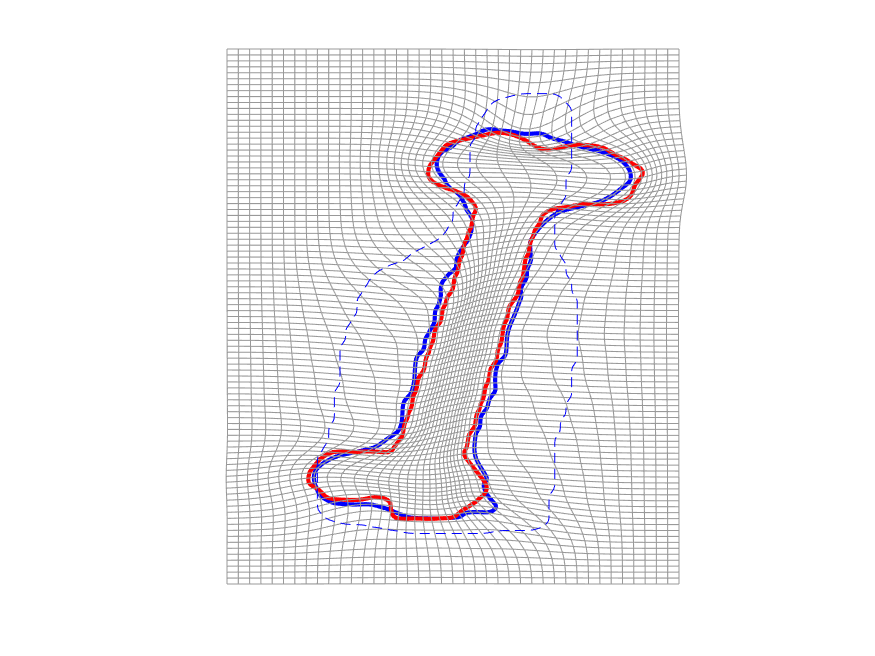} 
    &\includegraphics[trim = 30mm 15mm 30mm 15mm ,clip,width=3.6cm]{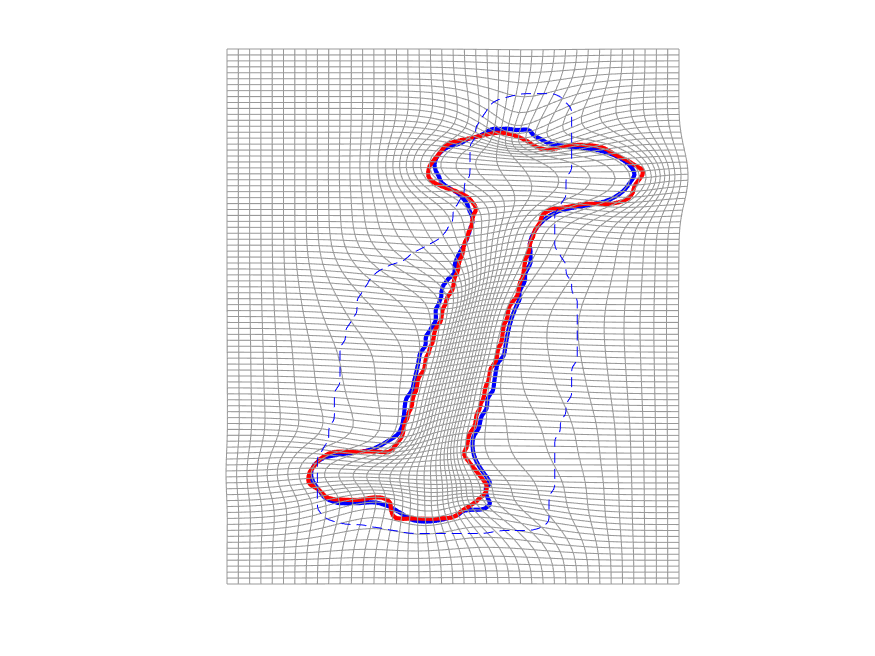} 
    &\includegraphics[trim = 30mm 15mm 30mm 15mm ,clip,width=3.6cm]{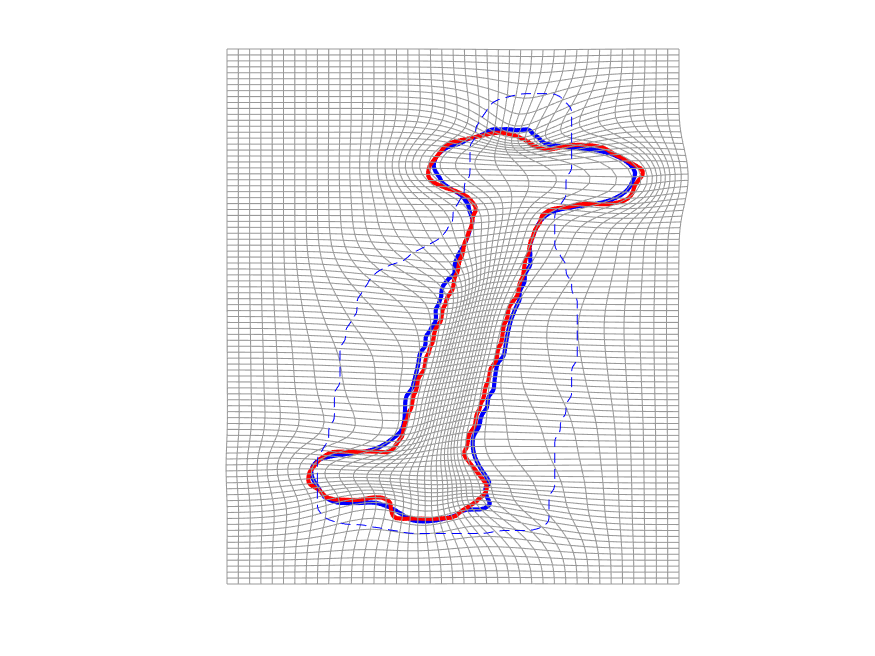} \\
    &$N=25$ &$N=40$ &$N=150$
    \end{tabular}   
  
    \caption{Compression and registration of 1-varifolds. The first row shows the results of the quantization algorithm on the 1-varifold associated to the source shape for different values of $N$; the relative quantization errors are plotted on the left (blue curve) and compared to the errors obtained with a uniform subsampling scheme (green curve). The second row shows the registration results using the apprroximated source in the first row. The plot on the left of the second row shows the difference to the groundtruth optimal energy when solving the registration problem from the approximate source given by the varifold quantization (blue) and the direct subsampling approach (green).}
    \label{fig:Bone_Bottle_quantization_registration}
\end{figure*}

\subsection{Approximation and registration}
\label{ssec:results_approx_reg}
In this second part, we examine some results of the varifold quantization procedure proposed in Section \ref{sec:approximation_discrete_var}, and in particular its interplay with the registration algorithm. Specifically, we wish to numerically validate the statements of Corollary \ref{cor:var_approx_W} and Theorem \ref{thm:convergence_sol}. We shall consider the following protocol. Starting from a highly sampled shape (that we treat as the groundtruth) for which the associated varifold $\mu_0$ is composed of a very high number of Diracs, we compute the compressed varifolds given by the $\mu_N$ of \eqref{eq:projection_problem} for increasing values of $N$ and evaluate the resulting quantization error in terms of the $d_{W^*}$ metric. Then we solve the registration problems to a fixed target $\mu_{tar}$ from the source varifolds given by the $\mu_N$ in lieu of $\mu_0$, and compare the estimated solutions to the registration of the groundtruth. For comparison, we will evaluate the total energy $E(v^N)$ of the estimated deformation fields $v^N$ for the original problem, i.e. 
\begin{equation*}
    E(v^N) = \int_0^1 \|v^N_t\|_{V}^2 dt + \lambda \|(\varphi_1^{v^N})_{\#} \mu_0 - \mu_{tar} \|_{W^*}^2. 
\end{equation*}
We shall also compare this overall approach against the alternative idea of directly subsampling the original meshes and registering those subsampled shapes with point set mesh LDDMM.

\begin{figure*}[h]
    \begin{center}
    \begin{tabular}{cccc}
    &\includegraphics[trim = 30mm 15mm 30mm 15mm ,clip,width=5cm]{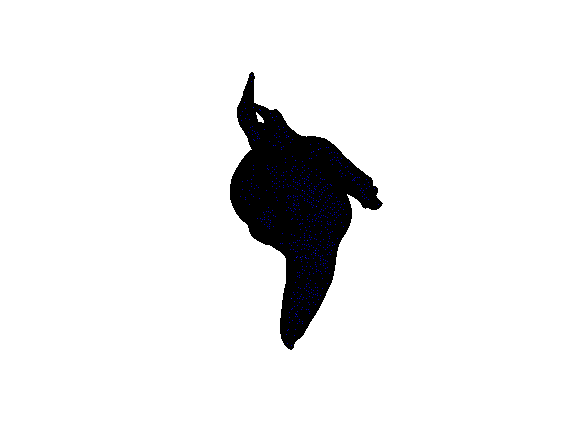} &
    &\includegraphics[trim = 30mm 15mm 30mm 15mm ,clip,width=5cm]{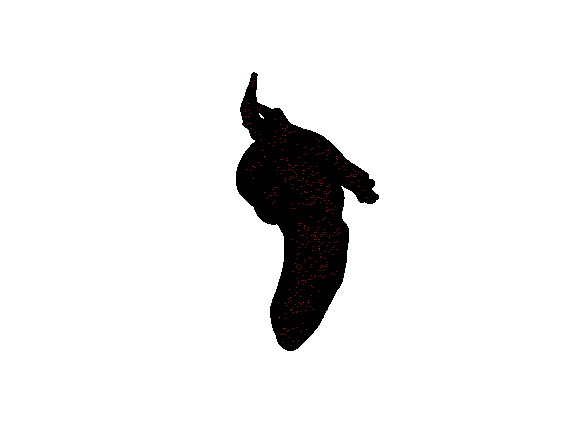} \\
    &Source surface (42448 triangles) &  & Target surface (50352 triangles)
    \end{tabular}    
    \vskip3ex
    \hspace*{-1.5cm}
    \begin{tabular}{ccccc}
    &\includegraphics[width=5.5cm]{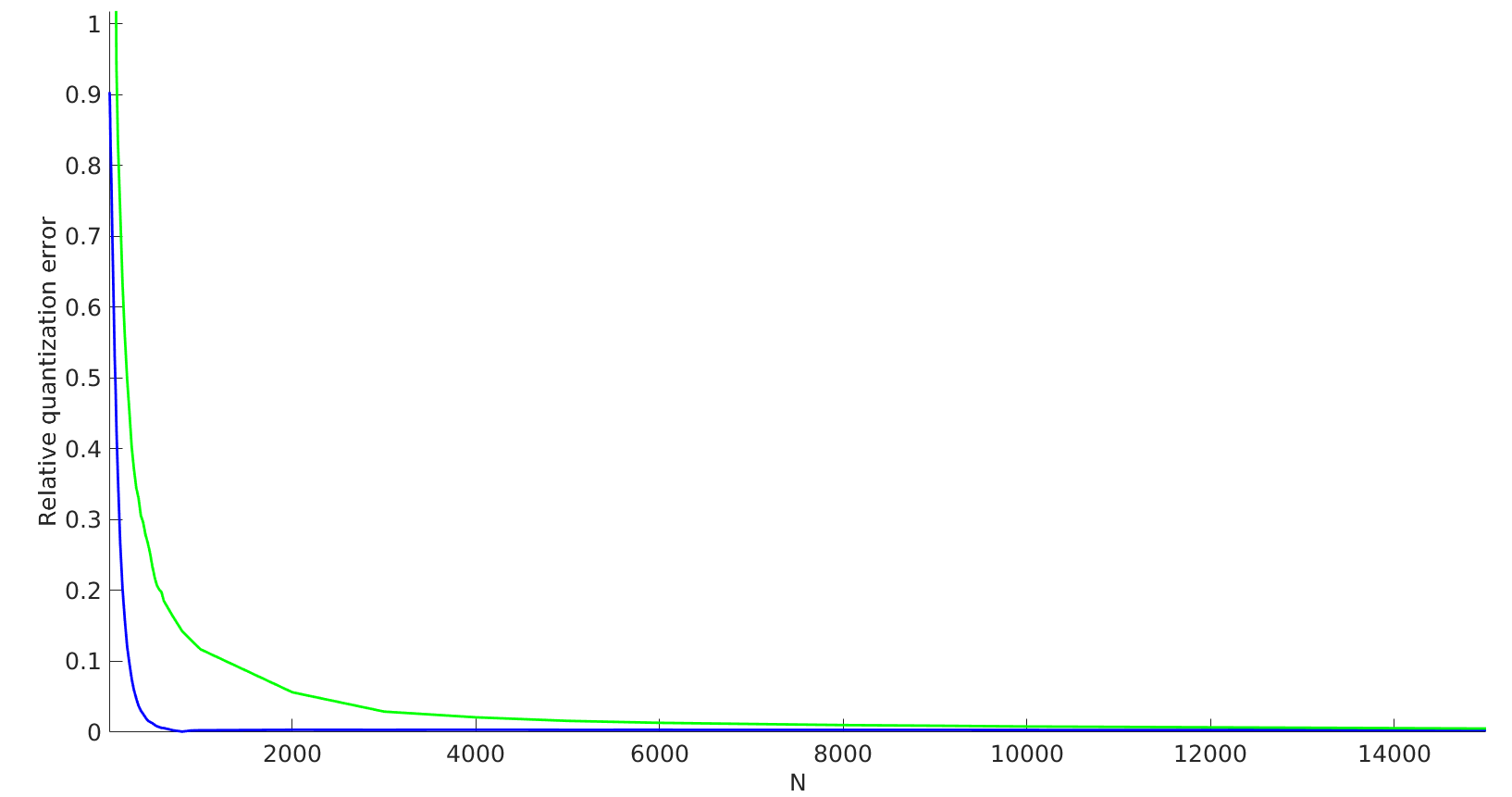}
    &\includegraphics[trim = 30mm 15mm 30mm 15mm ,clip,width=3.8cm]{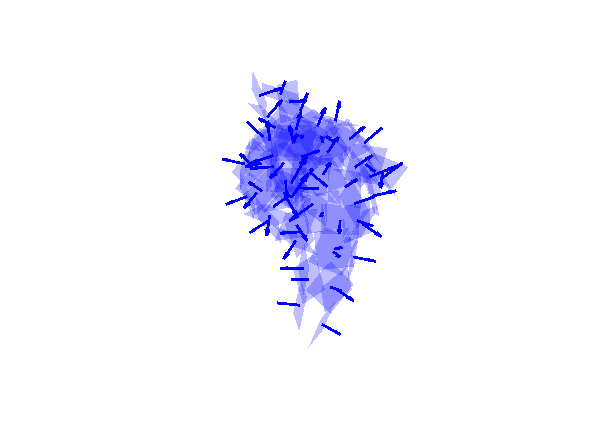} 
    &\includegraphics[trim = 30mm 15mm 30mm 15mm ,clip,width=3.8cm]{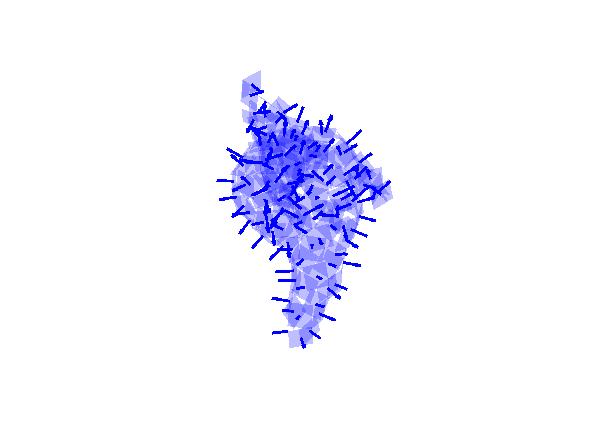}
    &\includegraphics[trim = 30mm 15mm 30mm 15mm ,clip,width=3.8cm]{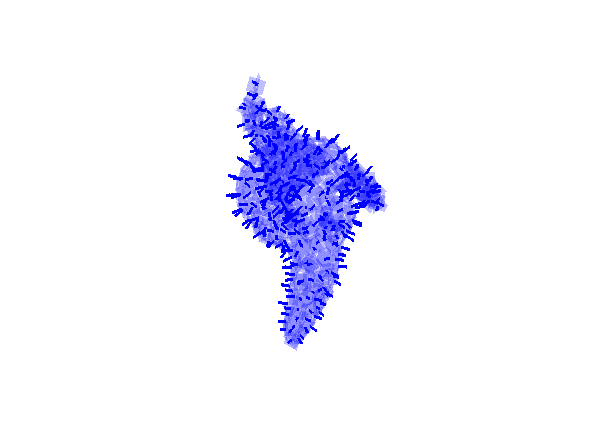} \\
    &Relative quantization error plot &$N=65$, rel err=$28\%$ &$N=125$, rel err=$7.1\%$ &$N=375$, rel err=$0.07\%$
    \end{tabular}
    \vskip3ex
    \begin{tabular}{c}
    \includegraphics[width=7cm]{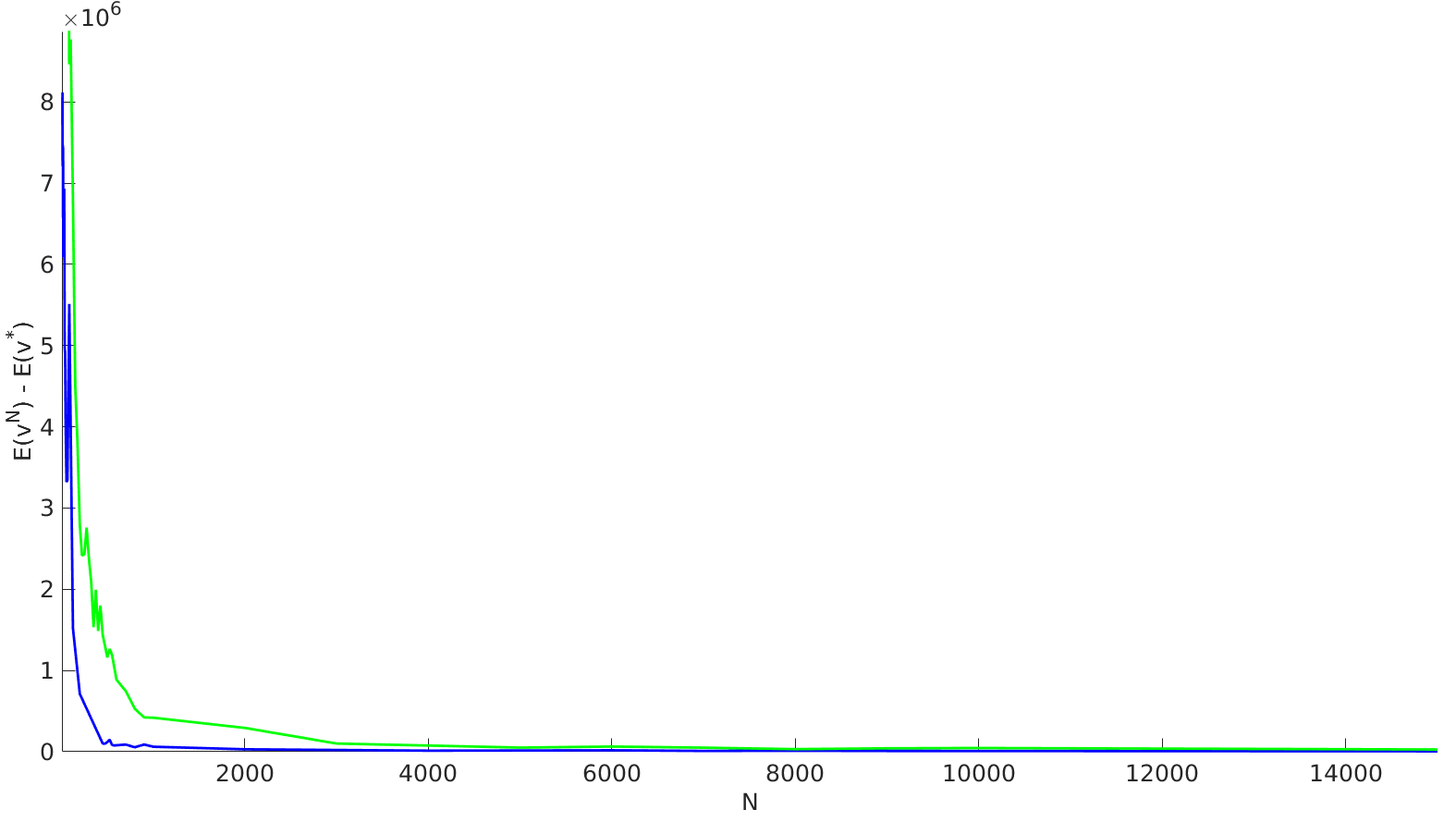} \\
    Total registration energy differences: $E(v^n)-E(v^*)$.
    \end{tabular}
    \caption{Compression and registration of 2-varifolds. On top, the source and target triangulated surfaces. The second row shows the results of the quantization algorithm on the 2-varifold associated to the source shape for different values of $N$; the relative quantization errors are plotted on the left (blue curve) and compared to the errors obtained with a mesh subsampling scheme (green curve). The plot on the third row shows the difference to the groundtruth optimal energy when solving the registration problem from the approximate source given by the varifold quantization (blue) and the mesh subsampling approach (green).}
    \label{fig:heart_quantization_registration}
    \end{center}
\end{figure*}

We begin with a 1-varifold toy example given by the curves shown in Fig. \ref{fig:Bone_Bottle_quantization_registration} from the Kimia database. These very simple curves segmented from binary images have a relatively high number of points to start with (368 vertices and edges). We look first at how well they can be approximated with smaller number of Diracs through the quantization approach described above. The upper row shows the plot of the relative approximation error $\|\mu_N - \mu_0\|_{W^*}/\|\mu_0\|_{W^*}$ of the source curve as a function of $N$ (blue) as well as the same error in varifold norm when instead the curve is uniformly subsampled (green). Consistent with the fact that varifold quantization should provide the optimal error rate at a given $N$, we observe that the error is indeed smaller than with the subsampling approach. We also display a few of the quantized $\mu_N$ for several values of $N$. As a second step, we compute the optimal deformations from the reduced shapes to the fixed target and compare their registration energies to the "groundtruth" $E(v^*)$ estimated from the full resolution source shape. The corresponding plots for the quantization versus subsampling methods are shown on the lower row in blue and green respectively. It suggests again a faster convergence to the optimal energy $E(v^*)$ with the quantization strategy, although the difference between the two methods is rather tenuous in this example.     

Those effects can be much more significant in the two-dimensional case. We emphasize it with the triangulated heart surfaces of Fig. \ref{fig:heart_quantization_registration} (data courtesy of C. Chnafa, S. Mendez and F. Nicoud, University of Montpellier). The source surface has a total of 42448 triangles leading to the same number of Diracs for the source 2-varifold $\mu_0$ and thus compressing the representation may be in that case quite critical from a computational standpoint. Indeed, computing the groundtruth matching at full resolution takes more than 7 hours (68s per iteration) in this case. We again compare two approaches: our quantization algorithm applied to $\mu_0$ versus directly subsampling the triangulated surface itself (we use here the reducepatch function in MATLAB to reduce the initial mesh to a given number of triangles). For both methods, we compute the relative approximation error $\|\mu_N - \mu_0\|_{W^*}/\|\mu_0\|_{W^*}$ with different values of $N$, the number of Diracs (resp. triangles) of the compressed varifold (resp. mesh). This is shown on the left second row in Fig. \ref{fig:heart_quantization_registration}. Unsurprisingly, we see that the quantization approach leads to a much faster decrease in the error as a function of $N$ but that in addition we obtain a very good approximation of $\mu_0$ with only a small fraction of the initial number of Diracs. Some of the quantized varifolds $\mu_N$ are displayed in the figure. We also evaluate how well the solution of the registration problem to the target varifold or surface can be approximated based on the quantized source shapes. With $v^*$ being a numerical solution for the groundtruth and $v^N$ the solutions based on the quantized source shapes, the third row of Fig. \ref{fig:heart_quantization_registration} shows the difference of the energies $E(v^N)-E(v^*)$. We observe again a faster convergence towards the groundtruth optimal energy with the varifold quantization than with mesh subsampling.

\section{Discussion}
In this paper, we proposed a registration framework between varifolds that goes beyond the previous restrictions of such models to the registration of discrete or smooth submanifolds of $\R^n$. To achieve so, we studied a general class of distances between oriented varifolds based on reproducing kernels and derived a deformation model on the space $\mathcal{V}_d$, which are combined into an optimal control formulation of the registration problem between any two varifolds. We also examined the possibility to couple this approach with a quantization/compression methodology in order to eventually tackle the registration problem, in practice, on discrete varifolds with a relatively low number of Dirac masses.     

We showed that first of all this setting leads to an equivalent yet alternative formulation to the diffeomorphic registration of rectifiable sets such as continuous or discrete curves and surfaces; the resulting higher-order Hamiltonian systems in our model provides richer local patterns for the deformations but at the price of a higher numerical cost. From an application standpoint, however, the main advantage we expect from this framework is that it applies very naturally to more general geometric objects, in particular to typical situations where well-defined and reliable meshes are not available. We gave a taste of it through some of the examples of Section \ref{sec:results}, although future work on a larger scale will be needed in order to evaluate such benefits more thoroughly. Besides the cases mentioned here, there are also several types of data that could constitute interesting test applications for this setting. This includes for instance high-angular resolution diffusion MRI in which the data is effectively modeled as spatially distributed orientation probability distribution functions consistent with the Young measure representation of varifolds in \eqref{eq:var_mu_disintegration}, or the case of contrast-invariant image registration c.f. \cite{hsieh2019diffeomorphic}. 

At the theoretical level, there are several questions left open by this work which we believe can constitute interesting tracks for future work. One is to study the possibility of extending all or part of the results of Section \ref{sec:approximation_discrete_var} to more general kernel metrics (in particular currents) and determining tighter quantization error bounds. Moreover, the registration model at play in this paper is based on the pushforward group action of $\text{Diff}(\R^n)$ on $\mathcal{V}_d$. Yet, other group actions could be have been considered, as briefly evoked in Section \ref{ssec:deformation_models}, that involve different choices of reweighing factor, for which we could expect very different properties of the solutions to the registration problem. 

Lastly, some additional work on the numerical side is likely needed for potential future applications to large scale databases, most notably to generalize this work to the estimation of means and atlases over populations of many high resolution shapes. Indeed, as we pointed out, even with the ability to compress the size of varifolds in the registration pipeline using the quantization approach, the higher complexity of the dynamical equations involved in the registration model has a non-negligible numerical toll. This could be improved in the future by using more efficient computational schemes for the repeated evaluations of sums of kernels and derivative of kernels appearing in the Hamiltonian equations, possibly along the lines of fast multiple methods.         

\section*{Acknowledgements}
The authors would like to thank Benjamin Charlier, Siamak Ardekani, Laurent Younes and the BIOCARD team for sharing the data used in some of the examples of this paper. This work was supported by NSF grant No 1819131.

\section*{Appendix}
\textbf{Proof of Theorem \ref{thm:dist_rectifiable_var}}
\vskip1ex
We first prove that $\mathcal{H}^d(X\bigtriangleup Y) =0$. Let us denote by $W^{pos}$ and $W^G$ the RKHS associated to kernels $k^{pos}$ and $k^{G}$ respectively. Suppose that $X$ and $Y$ are rectifiable sets as above such that $\left\| \mu_X-\mu_Y \right\|_{W^*}=0$ and $\mathcal{H}^d(X\bigtriangleup Y) >0$. Without loss of generality, we may assume that $\mathcal{H}^d(X \setminus Y) >0$. From Lusin's theorem, there exists a subset $U$ of $X$ such that $T|_{U}$ is continuous and $\mathcal{H}^d(X\setminus U) <  \mathcal{H}^d(X\setminus Y)$. Let us denote by $E := U \cap (X \setminus Y)$, we see that $\mathcal{H}^d(E)>0$. Since for $\mathcal{H}^d \ a.e. \ x\in E$, 

\begin{align*}
\limsup\limits_{r \rightarrow 0} \frac{\mathcal{H}^d(B_r(x)\cap E )}{\frac{\pi^{\frac{d}{2}}}{\Gamma(\frac{d}{2}+1)}r^d} \geq \frac{1}{2^d},
\end{align*}
(cf \cite{evans2018measure}), there exists $x_0 \in E, \ \mathcal{H}^d(B_r(x_0) \cap E)>0$ for any $r >0$.  

Let $g:\widetilde{G}^n_d \rightarrow \R$ be defined by $g(\cdot) = k^G(T(x_0),\cdot)$. Since $x \longmapsto g(T(x))$ is continuous on $E$ and $g(T(x_0))>0$, there exists $r_0>0$ such that $\forall \ x \in B_{r_0}(x_0) \cap E, \ g(T(x))>0$. Let $A \doteq B_{r_0}(x_0) \cap E$ and $h(x) := \mathbf{1}_A(x)$, then $\mathcal{H}^d(A)>0$ and $g(T(x))>0, \ \forall \ x \in A$. Using the density of $C_c(\mathbb{R}^n)$ in $L^1(\mathbb{R}^n,\mathcal{H}^d \mres (X \cup Y))$ together with the fact that $k^{pos}$ is $C_0$-universal, there exist $\{f_j\}_{j=1}^{\infty} \subset C_c(\mathbb{R}^n)$ and $\{h_j\}_{j=1}^{\infty} \subset W^{pos}$ 
such that $\lim\limits_{j \rightarrow \infty} f_j =h$ in $L^1(\mathbb{R}^n,\mathcal{H}^d \mres (X \cup Y))$ and $\|f_j-h_j \|_{\infty}< \frac{1}{j}$. Now, since $h_j \otimes g \in W$ and $\mu_X = \mu_Y$ in $W^*$, we have

\begin{equation*}
0 = (\mu_X-\mu_Y)(h_j \otimes g) = \int_{X} h_j(x) g(T(x)) d \mathcal{H}^d(x) - \int_{Y} h_j(y) g(S(y)) d \mathcal{H}^d(y) \rightarrow \int_A g(T(x)) d \mathcal{H}^d(x) >0,
\end{equation*}
which is a contradiction. Hence we have $\mathcal{H}^d(X\bigtriangleup Y) = 0$
 
Next, we show that $T(x) = S(x)$ $\mathcal{H}^d$-$a.e.$. Let $F := \{x \in X | T(x) = -S(x)\}$ and assume that $\mathcal{H}^d(F)>0$. From Lusin's theorem, there exists subset $F' \subset F$ such that $T|_{F'}$ is continuous and $\mathcal{H}^d(F')>0$. Using the upper density argument as above, we can find $z_0 \in F'$ such that $\mathcal{H}^d(B_r(z_0) \cap F')>0$ for all $r>0$. Since the map $x \mapsto \langle T(x),T(z_0) \rangle$ restricted to $F'$ is continuous, there exists a $\delta_0>0$ satisfying:
\begin{align*}
   \langle T(x),T(z_0) \rangle >0, \ \forall x \in B_{\delta_0}(z_0) \cap F'.
\end{align*}
Define $B:= B_{\delta_0}(z_0) \cap F'$, $\eta(\cdot):= \gamma(\langle \cdot,T(z_0) \rangle)$ and $u(x) := \eta(T(x)) - \eta(S(x))$. Observe that, from the assumption $\gamma(t) \neq \gamma(-t), \ \forall t \in [-1,1]$,
\begin{align*}
u(x) = \eta(T(x)) - \eta(-T(x)) \neq 0, \ \forall x \in F'.
\end{align*}
From this, we may assume that $u(x)>0, \ \forall x \in F'$. Let $\{f_j'\}_j$ and $\{h_j'\}_j$ be sequences in $C_c(\mathbb{R}^n)$ and $W_{pos}$ such that $f_j'$ converges to $\mathbf{1}_B$ in $L^1(\mathbb{R}^n,\mathcal{H}^d \mres F)$ and $\|f_j'-h_j'\|_{\infty} < 1/j$. We obtain
 
 \begin{equation*}
   0 = (\mu_X - \mu_Y| h_j' \otimes \eta) = \int_X h_j'(x) u(x) d\mathcal{H}^d(x) \rightarrow \int_{B} u(x) d\mathcal{H}^d(x) > 0,
 \end{equation*}
which is impossible. \qed
 
\vskip3ex

\textbf{Proof of Theorem \ref{thm:exist_opt_control}}
\vskip1ex
 Thanks to the first term in $E$, any minimizing sequence of $E$ is bounded in $L^2([0,1],V)$. Let $\{v_j\}$ be a subsequence of such minimizing sequence which converges weakly to some $\bar{v}$ in $L^2([0,1],V)$. Using the results of \cite{younes2019shapes} Chapter 7.2, we know that
 \begin{align*}
    \lim_{j \rightarrow \infty} \|(\varphi_1^{v_j} - \varphi_1^{\bar{v}})|_K \|_{1,\infty} = 0. 
 \end{align*}
Furthermore, for any $\omega \in W$, we have
\begin{align*}
\left|\left( (\varphi_1^{v_j})_{\#} \mu_0 - (\varphi_1^{\bar{v}})_{\#} \mu_0 | \omega \right) \right| 
&=\bigg{|} \int_{K} J_S \varphi_1^{v_j}(x) \omega(\varphi_1^{v_j}(x),d_x \varphi_1^{v_j} \cdot S)  
- J_S\varphi_1^{\bar{v}}(x) \omega(\varphi_1^{\bar{v}}(x),d_x \varphi_1^{\bar{v}} \cdot S) d \mu_0 \bigg{|} \\
&\leq \int_{K} |J_S \varphi_1^{v_j}(x)| 
 \left|\omega(\varphi_1^{v_j}(x),d_x \varphi_1^{v_j} \cdot S) -  \omega(\varphi_1^{\bar{v}}(x),d_x \varphi_1^{\bar{v}} \cdot S) \right|  d \mu_0 \\
&+ \int_{K} \left|J_S \varphi_1^{v_j}(x) - J_S\varphi_1^{\bar{v}}(x) \right|  \left|\omega(\varphi_1^{\bar{v}}(x),d_x \varphi_1^{\bar{v}} \cdot S)  \right| d \mu_0 \\
\end{align*}
Now, using the embedding $W \hookrightarrow C_0^1(\mathbb{R}^n \times \widetilde{G}^n_d)$
\begin{align*}
\left|\left( (\varphi_1^{v_j})_{\#} \mu_0 - (\varphi_1^{\bar{v}})_{\#} \mu_0 | \omega \right) \right| 
&\leq \left( \int_{K} |J_S \varphi_1^{v_j}(x)| d \mu_0 \right) \| \omega\|_{1,\infty} \| (\varphi_1^{v^N} - \varphi_1^{\bar{v}})|_K \|_{1,\infty} 
+ C \| (\varphi_1^{v_j} - \varphi_1^{\bar{v}})|_K \|_{1,\infty} \\
&\leq C' \| (\varphi_1^{v_j} - \varphi_1^{\bar{v}})|_K \|_{1,\infty}.
\end{align*}
Taking supremum over all $\omega \in W$ with $\|\omega\|_W \leq 1$, we obtain that
\begin{align*}
\|(\varphi_1^{v_j})_{\#} \mu_0 - (\varphi_1^{\bar{v}})_{\#} \mu_0 \|_{W^*} \leq  C' \| (\varphi_1^{v_j} - \varphi_1^{\bar{v}})|_K \|_{1,\infty} \rightarrow 0
\end{align*}
as $j \rightarrow \infty$. Combining this with lower semicontinuity of $v \mapsto \|v\|_{L^2([0,1],V)}^2$, we finally obtain that
\begin{align*}
    E(\bar{v}) \leq \liminf_{j \rightarrow \infty} E(v_j)
\end{align*}
and hence $\bar{v}$ is a global minimizer. \qed

\vskip3ex

\textbf{Proof of Proposition \ref{prop:variation_g}}
\vskip1ex
Recall that for all $\phi \in \text{Diff}(\R^n)$, $g(\phi) = \lambda \|\phi_{\#} \mu_0 - \mu_{tar} \|_{W^*}^2$ which we may rewrite as
\begin{equation*}
    g(\phi) = \lambda (\phi_{\#} \mu_0 | K_W(\phi_{\#} \mu_0 -2\mu_{tar})) + \lambda \|\mu_{tar}\|_{W^*}^2.
\end{equation*}
Thus, the variation with respect to $\phi$ in the Banach space $\mathcal{B}$ writes
\begin{equation*}
    \partial_{\phi} g(\phi)  = \partial_{\phi} (\phi_{\#} \mu_0 | \omega_0)
\end{equation*}
where $\omega_0 \doteq 2\lambda  K_W(\phi_{\#} \mu_0 -\mu_{tar}) \in W$. Moreover
\begin{equation*}
 (\phi_{\#} \mu_0 | \omega_0) = \int_{\R^n \times \widetilde{G}_d^n} \omega_0(\phi(x),d_x \phi \cdot T) J_T \phi(x) d \mu_0(x,T).  
\end{equation*}
Taking the variation with respect to $\phi$ along any $u \in C_0^1(\R^n,\R^n)$, we obtain:
\begin{align}
\label{eq:variation_g_1}
  (\partial_{\phi} g(\phi) | u) &= \int_{\R^n \times \widetilde{G}_d^n} \partial_x \omega_0(\phi(x),d_x \phi \cdot T) \cdot u(x) J_T \phi(x) d \mu_0(x,T) \nonumber \\
  &+\int_{\R^n \times \widetilde{G}_d^n} \partial_T \omega_0(\phi(x),d_x \phi \cdot T) \cdot (d_x u |_T) J_T \phi(x) d \mu_0(x,T) \nonumber \\
  &+\int_{\R^n \times \widetilde{G}_d^n} \omega_0(\phi(x),d_x \phi \cdot T). \text{div}_T u(x). J_T \phi(x) d \mu_0(x,T)
\end{align}
where the last term follows from the differentiation of Gram determinant matrices while the notation $\partial_T$ in the second term is a shortcut notation for differentiation on the Grassmannian which we do not explicit further here, we however refer to the similar computations done in \cite{Charon2} and to the developments in Section \ref{sec:numerics} for more details. For the first term, we can rely on the Young measure decomposition $\mu_0 = |\mu_0| \otimes \nu_x$ introduced at the end of Section \ref{subsec:defofvf} which gives:
\begin{equation*}
    (1) = \int_{\R^n} \tilde{\alpha}(\phi,x) \cdot u(x) \ d |\mu_0|(x), \ \ \text{where} \ \ \tilde{\alpha}(\phi,x) = \int_{\widetilde{G}_d^n} \partial_x \omega_0(\phi(x),d_x \phi \cdot T) J_T \phi(x) d\nu_x(T). 
\end{equation*}
We can also rewrite the third term as:
\begin{equation*}
    (3) = \int_{\R^n \times \widetilde{G}_d^n} \tilde{\gamma}(\phi,x,T)  \ \text{div}_T u(x) \ d \mu_0(x,T), \ \ \text{with} \ \ \tilde{\gamma}(\phi,x,T) =  \omega_0(\phi(x),d_x \phi \cdot T) \ J_T \phi(x).     
\end{equation*}
As for the second term in \eqref{eq:variation_g_1}, for each $(x,T)$ the integrand involves a linear combination (depending on $\phi$) of the partial derivatives of $u$ along the subspace $T$ i.e. of the elements of the matrix $d_x u|_T \in \R^{n\times d}$. Thus, without attempting to specify this term explicitly, we can in general write it as $\tilde{\beta}(\phi,x,T) d_x u|_T$ where $\tilde{B}$ is a continuous map from $\mathcal{B} \times \R^n \times \tilde{G}_d^n$ into $\mathcal{L}(\R^{n\times d},\R)$ giving us 
\begin{equation*}
    (2) = \int_{\R^n \times \widetilde{G}_d^n} \tilde{B}(\phi,x,T) d_x u|_T \ d \mu_0(x,T).
\end{equation*}
The result of the theorem then follows by setting $\alpha(x) \doteq \tilde{\alpha}(\varphi_1^v,x)$, $\beta(x,T) \doteq \tilde{\beta}(\varphi_1^v,x,T)$ and $\gamma(x,T)=\tilde{\gamma}(\varphi_1^v,x,T)$. \qed

\vskip3ex

\textbf{Proof of Proposition \ref{prop:invariance_momentum}}
\vskip1ex
We can treat the case of each particle $i$ separately and thus, without loss of generality, we may directly assume that $N=1$. We write $q(t)=(x(t),u^{(1)}(t),\cdots,u^{(d)}(t))$, $p(t)=(p^x(t),p^{u_1}(t),\ldots,p^{u_d}(t))$ for the state and costate variables along an optimal trajectory and 
\begin{align*}
    U \doteq \textrm{Span}\{u^{(1)}(1),\cdots,u^{(d)}(1)\}.
\end{align*}
Consider the group of linear transformations, $G \doteq {\rm SL} (U) \oplus {\rm GL} (U^{\perp})$, i.e., for any $\mathrm{g} \in G$,
\begin{align*}
\mathrm{g}(x) = \mathrm{g}_{\parallelsum}(x_{U}) + \mathrm{g}_{\perp}(x_{U^{\perp}}),
\end{align*}
where $x_{U}$ and $x_{U^{\perp}}$ are the orthogonal projections of $x$ on $U$ and $U^{\perp}$, with   $\mathrm{g}_{\parallelsum} \in {\rm SL}(U)$ and $\mathrm{g}_{\perp} \in {\rm GL}(U^{\perp})$. The Lie algebra of $G$ is $\mathfrak{g} = \mathfrak{sl}(U) \times \mathcal{L}(U^\perp)$ and $\mathfrak{sl}(U)$ is the set of all zero trace linear transformations of $U$. Now, consider the action of $G$ on $\mathbb{R}^{(d+1)n}$ defined as:
\begin{align*}
\mathrm{g} \cdot q := (q_0,\mathrm{g}(q_1),\cdots,\mathrm{g}(q_d)).
\end{align*}
for any $q=(q_0,\ldots,q_{d}) \in \mathbb{R}^{(d+1)n}$. We see that $\mu^{\mathrm{g}\cdot q(1)} = \mu^{q(1)}$ for all $\mathrm{g} \in G$ and therefore $g(\mathrm{g}\cdot q(1)) = g(q(1))$. 

Now, if we let $\{\mathrm{g}_t\}$ be a smooth curve in $G$ that satisfies $\mathrm{g}_0 = id$ and $\frac{d}{d\tau}|_{\tau=0} \mathrm{g}_\tau = h \in \mathfrak{g}$, differentiating the equality $g(\mathrm{g}_\tau \cdot q(1)) = g(q(1))$ shows that for any $h \in \mathfrak{g}$, we have
\begin{equation*}
    0 = (p(1)|h \cdot q(1)) = \sum_{k=1}^{d} \langle p^{u_k}(1) , h(u^{(k)}(1)) \rangle 
\end{equation*}
Since $h \in \mathfrak{g}$, we must have that $h|_{U}$ is a zero trace linear map. For any $1 \leq i < j \leq d$, we may choose $h$ such that $h(u^{(i)}(1)) = -h(u^{(j)}(1))$ and $h(u^{(k)}(1)) = 0 , \ \forall k \notin\{i,j\}$, which leads to $\langle u^{(i)}(1), p^{u_i}(1) \rangle = \langle u^{(j)}(1), p^{u_j}(1) \rangle$. Consequently,
\begin{align*}
\langle u^{(1)}(1), p^{u_1}(1) \rangle = \cdots = \langle u^{(d)}(1), p^{u_d}(1) \rangle =\alpha 
\end{align*}
for some constant $\alpha$. In addition, for any $i \neq j$, we can also choose $h$ such that $h(u^{(i)}(1)) = u^{(j)}(1)$ and $h(u^{(k)}(1))= 0 , \ \forall k \notin\{i,j\}$, which gives $\langle u^{(i)}(1),p^{u_j}(1) \rangle = 0$. It results that $D(1) = \alpha.I_{d \times d}$.

Finally, since $D(t)$ is constant by Lemma \ref{lemma:conservation_forward_eq}, we obtain that
\begin{align*}
D(t) = \left( \begin{array}{ccc}
\alpha &  & 0  \\
         & \ddots    &  \\
    0    &  &  \alpha
\end{array} \right),
\end{align*} 
for all $t \in [0,1]$. \qed

\bibliographystyle{apacite}
\bibliography{biblio}

\end{document}